  \def\Cref#1{<#1>}%
\newtheorem{theorem}{Theorem}
\newtheorem{lemma}[theorem]{Lemma}
\newtheorem{corollary}[theorem]{Corollary}
\DeclareMathOperator{\tr}{tr}
\DeclareMathOperator{\sgn}{sgn}
\DeclareMathOperator{\rank}{rank}
\DeclareMathOperator{\im}{im}
\title{Oscillations in three-reaction quadratic mass-action systems}
\author[1]{Murad Banaji}
\author[2]{Bal\'azs Boros\footnote{BB's work was supported by the Austrian Science Fund, project P32532.}}
\author[2]{Josef Hofbauer}
\affil[1]{Department of Design Engineering and Mathematics, Middlesex University London, UK}
\affil[2]{Department of Mathematics, University of Vienna, Austria}
\date{}
\begin{document}

\maketitle

\begin{abstract}
It is known that rank-two bimolecular mass-action systems do not admit limit cycles. With a view to understanding which small mass-action systems admit oscillation, in this paper we study rank-two networks with bimolecular source complexes but allow target complexes with higher molecularities. As our goal is to find oscillatory networks of minimal size, we focus on networks with three reactions, the minimum number that is required for oscillation. However, some of our intermediate results are valid in greater generality.

One key finding is that an isolated periodic orbit cannot occur in a three-reaction, trimolecular, mass-action system with bimolecular sources. In fact, we characterise all networks in this class that admit a periodic orbit; in every case all nearby orbits are periodic too. Apart from the well-known Lotka and Ivanova reactions, we identify another network in this class that admits a center. This new network exhibits a vertical Andronov--Hopf bifurcation.

Furthermore, we characterise all two-species, three-reaction, bimolecular-sourced networks that admit an Andronov--Hopf bifurcation with mass-action kinetics. These include two families of networks that admit a supercritical Andronov--Hopf bifurcation, and hence a stable limit cycle. These networks necessarily have a target complex with a molecularity of at least four, and it turns out that there are exactly four such networks that are tetramolecular.
\end{abstract}

\section{Introduction}

There are two well-known small reaction networks that exhibit oscillations: the Lotka reactions \cite{lotka:1920} and the Ivanova reactions \cite[page 630]{volpert:hudjaev:1985}. The networks, along with their associated mass-action differential equations, are
\begin{align}\label{eq:lotka_only}
\begin{aligned}
    \begin{tikzpicture}[scale=0.6]

\node[left]  (P1) at (0, 0) {$\mathsf{X}$};
\node[right] (P2) at (1, 0) {$2\mathsf{X}$};
\node[left]  (P3) at (0,-1) {$\mathsf{X+Y}$};
\node[right] (P4) at (1,-1) {$2\mathsf{Y}$};
\node[left]  (P5) at (0,-2) {$\mathsf{Y}$};
\node[right] (P6) at (1,-2) {$\mathsf{0}$};

\draw[->] (P1) to node[above] {\footnotesize $\kappa_1$} (P2);
\draw[->] (P3) to node[above] {\footnotesize $\kappa_2$} (P4);
\draw[->] (P5) to node[above] {\footnotesize $\kappa_3$} (P6);

\node at (8,-1)
{$\begin{aligned}
\dot{x} &= \kappa_1 x - \kappa_2 xy, \\
\dot{y} &= \kappa_2 xy - \kappa_3 y \\
\end{aligned}$};

\end{tikzpicture}
\end{aligned}
\end{align}
and
\begin{align}\label{eq:ivanova_only}
\begin{aligned}
    \begin{tikzpicture}[scale=0.6]

\node[left]  (P1) at (0, 0) {$\mathsf{Z}+\mathsf{X}$};
\node[right] (P2) at (1, 0) {$2\mathsf{X}$};
\node[left]  (P3) at (0,-1) {$\mathsf{X+Y}$};
\node[right] (P4) at (1,-1) {$2\mathsf{Y}$};
\node[left]  (P5) at (0,-2) {$\mathsf{Y+Z}$};
\node[right] (P6) at (1,-2) {$2\mathsf{Z}$};

\draw[->] (P1) to node[above] {\footnotesize $\kappa_1$} (P2);
\draw[->] (P3) to node[above] {\footnotesize $\kappa_2$} (P4);
\draw[->] (P5) to node[above] {\footnotesize $\kappa_3$} (P6);

\node at (8,-1)
{$\begin{aligned}
\dot{x} &= \kappa_1 x z - \kappa_2 x y, \\
\dot{y} &= \kappa_2 x y - \kappa_3 y z, \\
\dot{z} &= \kappa_3 y z - \kappa_1 x z,
\end{aligned}$};

\end{tikzpicture}
\end{aligned}
\end{align}
respectively. Both networks are \emph{bimolecular}, i.e., the molecularity of every source and target complex is at most two. Both have rank two, i.e., the span of their reaction vectors is two-dimensional; for the Ivanova reactions this follows from the mass-conservation relation $\dot{x}+\dot{y}+\dot{z}=0$. In both systems, all positive nonequilibrium solutions are periodic and, up to the inclusion of trivial species (to be defined below), these are the only three-reaction, bimolecular, rank-two, mass-action systems which admit a periodic solution. In fact, even with any number of reactions, there are no bimolecular rank-two systems that admit isolated periodic orbits \cite{pota:1983}, \cite{pota:1985}, \cite[Theorem 4.1]{boros:hofbauer:2022a}.

Hence, when searching for small mass-action systems with isolated periodic orbits, it is natural to study bimolecular rank-three networks or trimolecular rank-two networks. The former were studied in, for example, \cite{wilhelm:heinrich:1995} and \cite{boros:hofbauer:2023}, and more systematically in \cite{banaji:2018} and \cite{banaji:boros:2023}. Examples of the latter include Selkov's glycolytic oscillator \cite{selkov:1968}, the Brusselator \cite{lefever:nicolis:1971}, and the Schnakenberg networks \cite{schnakenberg:1979}.

In the present paper, we study networks with bimolecular sources but allow higher target molecularity. For instances of this kind in the literature, see, for example, \cite{frank-kamenetsky:salnikov:1943} or \cite{farkas:noszticzius:1985}. Bimolecular sources are chemically more realistic than sources of higher molecularity, and also easier to treat mathematically, because the corresponding mass-action differential equation is only quadratic. From here onwards, for brevity, we refer to networks with bimolecular sources as \lq\lq quadratic\rq\rq. For example, a trimolecular, quadratic network will mean a network with source molecularities at most two and target molecularities at most three. 

As our goal is to find or rule out oscillation in small networks, we focus on networks with three reactions, the minimum that is necessary for oscillation. The following theorem is one of our main results. It is an immediate corollary of Theorem~\ref{thm:ndim_trimolec}, which is proved in \Cref{sec:n_3_2}.

\begin{theorem}\label{thm:intro_trimolec}
Three-reaction, trimolecular, quadratic, mass-action systems admit no isolated periodic orbit.
\end{theorem}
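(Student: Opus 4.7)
My plan is to prove the more general \Cref{thm:ndim_trimolec}, from which this statement follows immediately. I outline how I would organize that proof for an arbitrary number of species.

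First I would dispose of the easy ranks. With only three reactions the stoichiometric rank is $r\le 3$. If $r\le 1$ the dynamics on each stoichiometric class is essentially one-dimensional and so admits no periodic orbit. If $r=3$ the reaction vectors $v_1,v_2,v_3$ are linearly independent, so one can choose a linear functional $\ell$ on $\mathbb{R}^n$ with $\ell(v_i)>0$ for every $i$ (take, for instance, the sum of the dual basis on their three-dimensional span and extend). Along any solution in the positive orthant,
\[
\frac{d}{dt}\ell(x)=\sum_{i=1}^3 r_i(x)\,\ell(v_i)>0,
\]
so $\ell(x)$ is strictly monotone and no periodic orbit exists. The same monotonicity argument rules out periodic orbits in the rank-$2$ case whenever $v_1,v_2,v_3$ are \emph{positively} independent, by Gordan's theorem.

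The substantive case is rank $2$ with $v_1,v_2,v_3$ positively dependent, i.e.\ $\sum_i\lambda_i v_i=0$ for some $\lambda_i\ge 0$ not all zero. Here the dynamics reduces to a planar system on each stoichiometric class, and I would apply the Bendixson--Dulac criterion with a monomial Dulac function $B(x)=\prod_j x_j^{\alpha_j}$. A direct computation yields
\[
\operatorname{div}(B\,\dot x)\;=\;B\sum_{i=1}^{3} r_i(x)\sum_j \frac{v_{ij}(\alpha_j+y_{ij})}{x_j},
\]
where $y_i$ denotes the source complex of reaction~$i$. The bimolecular-source, trimolecular-target hypothesis keeps these expressions of low degree, and the aim is to choose $\alpha$ so that the right-hand side either vanishes identically---yielding a first integral and hence a center with only non-isolated periodic orbits---or has a strict sign on the positive orthant, ruling out periodic orbits altogether.

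The main obstacle, I expect, is the finite case analysis underlying this Dulac step. Source complexes are drawn from a finite list of bimolecular monomials and targets from a finite list of trimolecular complexes, so up to species relabelling and removal of trivial species there are only finitely many combinatorial types of three-reaction networks with positively dependent reaction vectors to consider. Each individual calculation of $\alpha$ should be short, but organizing the enumeration cleanly---identifying equivalent cases, discarding those without a positive equilibrium (no candidate periodic orbit, by index theory), and producing the exponents $\alpha$ uniformly across the remaining families---is where the bulk of the effort will go, and presumably where the networks of Lotka and Ivanova (together with the new one mentioned in the abstract) will emerge as the exceptional cases for which the divergence vanishes identically.
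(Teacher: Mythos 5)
Your treatment of ranks $\le 1$ and $3$, of the dynamically trivial rank-two case, and your divergence computation with the exponent $\alpha=(-1,\dots,-1)$ reproduce the paper's \Cref{lem:divergence}: with that Dulac function the contribution of a reaction to $\mathrm{div}(Bf)$ is nonpositive term by term unless the reaction has the form \eqref{eq:rxn_pos_div}, which for trimolecular targets means exactly $2\mathsf{X}_j\to3\mathsf{X}_j$ (\Cref{cor:divergence_2X_3X}); when the divergence vanishes identically one gets Lotka--Volterra systems without diagonal terms, which are integrable and yield the Lotka/Ivanova-type exceptional families. Up to here your plan and the paper's coincide. The gap is in what you propose for the remaining networks, namely those containing $2\mathsf{X}\to3\mathsf{X}$. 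You hope that varying the monomial exponents will always restore a sign-definite (or identically zero) divergence, but you offer no reason this should succeed, and the paper does not argue this way: it derives the reduced Jacobian determinant formula \eqref{eq:detJred} and shows (\Cref{lem:saddle_or_LVA,lem:saddle_or_lifted_LVA,lem:4species_saddle,lem:5species}) that every such network is either dynamically trivial, or has only \emph{saddle} positive equilibria --- which excludes periodic orbits because a periodic orbit in a rank-two system must enclose an equilibrium of its stoichiometric class that is not a saddle, an index argument entirely absent from your proposal and not covered by your ``no positive equilibrium'' remark --- or belongs to the generalised/Lifted LVA families. Those exceptional families are then handled by bespoke tools (a Dulac/first-integral argument for \eqref{eq:LVA_d}, and for the Lifted LVA \eqref{eq:LVA_d_lifted} a change of variables to a predator--prey system with a Lyapunov function satisfying $\dot V=\kappa_2(\kappa_2-\kappa_3)(v-\bar v)^2$). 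Note that the Lifted LVA has non-isolated periodic orbits precisely when $\kappa_2=\kappa_3$ and none otherwise, so any Dulac certificate there must be computed on the two-dimensional stoichiometric classes (where the class constant $C$ enters the vector field) and must degenerate exactly on a codimension-one set of rate constants; your plan of producing exponents ``uniformly across families'' does not anticipate this.

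Two further omissions. First, you assert that the enumeration over an arbitrary number of species is finite after ``removal of trivial species,'' but a species can be nontrivial while absent from every source; the actual bound comes from the paper's \Cref{lem:5species}: the three bimolecular sources involve at most six species, and any species appearing in no source has nondecreasing concentration, which (once $2\mathsf{X}\to3\mathsf{X}$ is present) forces $n\le 4$. Second, the theorem places no positivity restriction on the periodic orbit, so you must also rule out periodic orbits on the boundary of $\mathbb{R}^n_{\ge0}$; the paper does this by restricting to locally invariant faces and by observing that a rank-two system without trivial species has no boundary periodic orbits.
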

\noindent We remark that there is no assumption in \Cref{thm:intro_trimolec} on the number of species involved: regardless of the number of species, whenever a three-reaction, trimolecular, quadratic, mass-action system has a periodic orbit, all nearby orbits are also periodic. In fact, we show that systems in this class admitting periodic orbits must belong to one of three families: one related to the Lotka system \eqref{eq:lotka_only}, one to the Ivanova system \eqref{eq:ivanova_only}, and one to the \emph{Lifted LVA}
\begin{align}\label{eq:LVA_lifted}
\begin{aligned}
\begin{tikzpicture}[scale=0.6]

\node[left]  (P1) at (0, 0) {$2\mathsf{X}$};
\node[right] (P2) at (1, 0) {$3\mathsf{X}$};
\node[left]  (P3) at (0,-1) {$\mathsf{X+Y}$};
\node[right] (P4) at (1,-1) {$2\mathsf{Y}+\mathsf{Z}$};
\node[left]  (P5) at (0,-2) {$\mathsf{Y+Z}$};
\node[right] (P6) at (1,-2) {$\mathsf{0}$};

\draw[->] (P1) to node[above] {\footnotesize $\kappa_1$} (P2);
\draw[->] (P3) to node[above] {\footnotesize $\kappa_2$} (P4);
\draw[->] (P5) to node[above] {\footnotesize $\kappa_3$} (P6);

\node at (8,-1)
{$\begin{aligned}
\dot{x} &= \kappa_1 x^2 - \kappa_2 xy, \\
\dot{y} &= \kappa_2 xy - \kappa_3 yz, \\
\dot{z} &= \kappa_2 xy - \kappa_3 yz,
\end{aligned}$};

\end{tikzpicture}
\end{aligned}
\end{align}
a mass-action system that is obtained by adding a new species to the Lotka--Volterra--Autocatalator (LVA) \cite[Eq.\ (8)]{farkas:noszticzius:1985}, \cite[Eq.\ (1)]{simon:1992}, in such a way that the rank of the network remains two. The Lifted LVA admits a vertical Andronov--Hopf bifurcation: it has a two-parameter family of periodic orbits when $\kappa_2=\kappa_3 > \kappa_1$, and no periodic orbits otherwise. This is in contrast to the Lotka and Ivanova systems, where every positive nonequilibrium solution is periodic for \emph{all} $\kappa_1$, $\kappa_2$, $\kappa_3$, and no bifurcations are admitted.

In light of \Cref{thm:intro_trimolec}, a three-reaction, quadratic, mass-action system with an isolated periodic orbit must have a target complex with molecularity at least four. We find that there are four planar, three-reaction, tetramolecular, quadratic, mass-action systems which admit a supercritical Andronov--Hopf bifurcation, and thus a linearly stable limit cycle, the simplest one being
\begin{align}\label{eq:tetra_simplest}
\begin{aligned}
\begin{tikzpicture}[scale=0.6]

\node[left]  (P1) at (0, 0) {$2\mathsf{X}$};
\node[right] (P2) at (1, 0) {$3\mathsf{X}+\mathsf{Y}$};
\node[left]  (P3) at (0,-1) {$\mathsf{X+Y}$};
\node[right] (P4) at (1,-1) {$\mathsf{Y}$};
\node[left]  (P5) at (0,-2) {$\mathsf{Y}$};
\node[right] (P6) at (1,-2) {$\mathsf{0}$};

\draw[->] (P1) to node[above] {\footnotesize $\kappa_1$} (P2);
\draw[->] (P3) to node[above] {\footnotesize $\kappa_2$} (P4);
\draw[->] (P5) to node[above] {\footnotesize $\kappa_3$} (P6);

\node at (8,-1)
{$\begin{aligned}
\dot{x} &= \kappa_1 x^2 - \kappa_2 xy, \\
\dot{y} &= \kappa_1 x^2 - \kappa_3 y. \\
\end{aligned}$};

\end{tikzpicture}
\end{aligned}
\end{align}
The other three are obtained from \eqref{eq:tetra_simplest} by replacing the target complex of the second reaction by $2\mathsf{Y}$, $3\mathsf{Y}$, or $4\mathsf{Y}$. In fact, exactly two families of three-reaction, planar, quadratic, mass-action systems admit a supercritical Andronov--Hopf bifurcation. The first family (with source complexes $2\mathsf{X}$, $\mathsf{X} +\mathsf{Y}$, $\mathsf{Y}$) includes the tetramolecular examples mentioned above.  In the second family (with source complexes $2\mathsf{X}$, $\mathsf{X} +\mathsf{Y}$, $\mathsf{0}$), every network has a target complex with a molecularity of at least seven. These results, along with further results on planar, quadratic, mass-action systems admitting periodic orbits, are discussed in \Cref{sec:2_3_2}.

The rest of this paper is organised as follows. After introducing the basic notation and terminology in \Cref{sec:prelim}, we present some tools focussed on the analysis of rank-two mass-action systems in \Cref{sec:rank_two}. Oscillations in two-species, three-reaction systems are studied in \Cref{sec:2_3_2}, and three-reaction networks with an arbitrary number of species are treated in \Cref{sec:n_3_2}. Finally, we close with some concluding remarks and observations in \Cref{sec:conclusions}.
\section{Preliminaries} \label{sec:prelim}

We collect some basic notation, terminology, and known results needed later.

The symbols $\mathbb{R}_+$, $\mathbb{R}_{\geq0}$, and $\mathbb{R}_-$ denote the set of \emph{positive}, \emph{nonnegative}, and \emph{negative real numbers}, respectively. Accordingly, $\mathbb{R}^n_+$, $\mathbb{R}^n_{\geq0}$, and $\mathbb{R}^n_-$ denote the \emph{positive}, \emph{nonnegative}, and \emph{negative orthants}, respectively. We use similar notation for sets of positive or nonnegative integers. We refer to subsets of $\mathbb{R}^n_+$ as {\em positive}.

Given a row vector $a=[a_1,\ldots,a_n]$ of nonnegative integers, we adopt the standard convention that $x^a$ is an abbreviation for the monomial $x_1^{a_1}\cdots x_n^{a_n}$. Accordingly, if $A$ is an $m \times n$ matrix of nonnegative integers with $a_{j\cdot}$ being its $j$th row ($j=1,\ldots,m$) then $x^A$ denotes the column vector $[x^{a_{1\cdot}},\ldots,x^{a_{m\cdot}}]^\top$.

The symbol $\circ$ stands for the \emph{entrywise product} of two vectors or matrices of the same size.

For $u,v \in \mathbb{R}^n$, we write $u \cdot v$ for the \emph{scalar product} of $u$ and $v$. When $n=3$, we denote the \emph{cross product} of $u$ and $v$ by $u \times v$.

\subsection{Chemical reaction networks}

We start by introducing (chemical) species, (chemical) complexes, (chemical) reactions, and (chemical reaction) networks. For a more detailed exposition, the reader may consult, for example, \cite{yu:craciun:2018}.

Given \emph{species} $\mathsf{X}_1,\ldots,\mathsf{X}_n$, a \emph{complex} is a formal sum $\sum_{i=1}^na_i\mathsf{X}_i$, where the coefficients $a_i$ are assumed to be nonnegative integers. A \emph{reaction} corresponds to the conversion of a complex termed the \emph{source complex} (or just \emph{source} for short) into another termed the \emph{target complex} (or just \emph{target} for short): a reaction can thus be regarded as an ordered pair of complexes. A \emph{network} is a collection of reactions.

To facilitate the introduction of further terminology, consider the $m$-reaction network
\begin{align}\label{eq:genCRN}
\sum_{i=1}^na_{ij}\mathsf{X}_i \longrightarrow \sum_{i=1}^n(a_{ij}+c_{ij})\mathsf{X}_i \quad \text{ for } j = 1, \ldots, m.
\end{align}
The matrix $\Gamma = [c_{ij}]\in\mathbb{R}^{n \times m}$ is called the \emph{stoichiometric matrix} of the network, while $\Gamma_l = [a_{ij}]\in\mathbb{R}^{n \times m}$ is termed its \emph{source matrix}. Each column of $\Gamma$ is the \emph{reaction vector} of the corresponding reaction. The image of $\Gamma$, denoted by $\im\Gamma$, is termed the \emph{stoichiometric subspace} of the network, while the sets $(x_0 + \im\Gamma)\cap\mathbb{R}^n_{\geq0}$ for $x_0 \in \mathbb{R}^n_{\geq0}$, and $(x_0 + \im\Gamma)\cap\mathbb{R}^n_+$ for $x_0 \in \mathbb{R}^n_+$, are the network's \emph{stoichiometric classes} and \emph{positive stoichiometric classes}, respectively. Finally, the \emph{rank} of the network is, by definition, $\rank \Gamma$.

A species $\mathsf{X}_i$ is called \emph{trivial} if $c_{ij}=0$ for all $j=1,\ldots,m$. In all reasonable models of a reaction network, the concentration of a trivial species remains constant along every trajectory.

A reaction network can be identified with its {\em Euclidean embedded graph} as defined in \cite{craciun:2019}. This is a directed graph obtained by identifying each complex with a point in $\mathbb{Z}^n_{\geq 0}$, and each reaction with an arc whose tail is the source of the reaction and whose head is the target of the reaction. For example, the Euclidean embedded graph of the tetramolecular network admitting a supercritical Andronov--Hopf bifurcation in \eqref{eq:tetra_simplest} is:

\begin{align*}
    \begin{tikzpicture}

\draw [step=1, gray, very thin] (0,0) grid (3.5,2.5);
\draw [ -, black] (0,0)--(3.5,0);
\draw [ -, black] (0,0)--(0,2.5);

\node[inner sep=0,outer sep=1] (P1) at (2,0) {\large \textcolor{blue}{$\bullet$}};
\node[inner sep=0,outer sep=1] (P2) at (3,1) {\large \textcolor{blue}{$\bullet$}};
\node[inner sep=0,outer sep=1] (P3) at (1,1) {\large \textcolor{blue}{$\bullet$}};
\node[inner sep=0,outer sep=1] (P4) at (0,1) {\large \textcolor{blue}{$\bullet$}};
\node[inner sep=0,outer sep=1] (P5) at (0,0) {\large \textcolor{blue}{$\bullet$}};

\node [below]  at (P1) {$2\mathsf{X}$};
\node [above right] at (P2) {$3\mathsf{X}+\mathsf{Y}$};
\node [above right] at (P3) {$\mathsf{X}+\mathsf{Y}$};
\node [left]  at (P4) {$\mathsf{Y}$};
\node [below left]  at (P5) {$\mathsf{0}$};

\draw[arrows={-stealth},very thick,blue] (P1) to node {} (P2);
\draw[arrows={-stealth},very thick,blue] (P3) to node {} (P4);
\draw[arrows={-stealth},very thick,blue] (P4) to node {} (P5);

\end{tikzpicture}
\end{align*}

\subsection{Molecularity}

The \emph{molecularity} of a complex $\sum_{i=1}^na_{i}\mathsf{X}_i$ is the nonnegative integer $\sum_{i=1}^na_{i}$. If every complex of a network has molecularity at most two, three, four, etc., then the network is said to be \emph{bimolecular}, \emph{trimolecular}, \emph{tetramolecular}, etc. For example, the network \eqref{eq:genCRN} is bimolecular if and only if $\sum_{i=1}^na_{ij} \leq 2$ and $\sum_{i=1}^n(a_{ij}+c_{ij}) \leq 2$ for $j=1, \ldots, m$.

If every source complex of a network has molecularity at most two then we refer to it as a \emph{quadratic network}. If, for example, the network \eqref{eq:genCRN} satisfies $\sum_{i=1}^na_{ij} \leq 2$ and $\sum_{i=1}^n(a_{ij}+c_{ij}) \leq 3$ for $j=1, \ldots, m$, we refer to it as a \lq\lq trimolecular, quadratic network\rq\rq. In terms of the Euclidean embedded graph, such a network is one whose sources are confined to $\{a \in \mathbb{Z}^n_{\geq 0}\colon \sum_{i=1}^n a_i \leq 2\}$, and whose targets are confined to $\{a \in \mathbb{Z}^n_{\geq 0}\colon\sum_{i=1}^n a_i \leq 3\}$.

\subsection{Dynamically nontrivial networks}\label{subsec:dyn_nontriv}

We refer to a network as \emph{dynamically nontrivial} if $\ker \Gamma \cap \mathbb{R}^m_+ \neq \emptyset$ or equivalently, $\im \Gamma^\top \cap \mathbb{R}^m_{\geq0}=\{0\}$, and \emph{dynamically trivial} otherwise. The equivalence of the two definitions follows from Stiemke's Theorem \cite{stiemke:1915}, which is a variant of Farkas' Lemma. Under weak assumptions on the reaction rates, the existence of a nonzero, nonnegative vector in the image of $\Gamma^\top$ is equivalent to the existence of a linear Lyapunov function in $\mathbb{R}^n_+$ for the associated differential equation, which increases strictly along all orbits in $\mathbb{R}^n_+$ (see \cite[Section 3.3]{banaji:2017}). Thus, dynamically trivial networks do not admit positive limit sets. In particular, only dynamically nontrivial networks can have an equilibrium or a periodic orbit in $\mathbb{R}^n_+$. We remark that some authors refer to dynamically nontrivial networks as \emph{consistent} \cite{angeli:leenheer:sontag:2007}, and similar ideas appear already in \cite[Section 5]{feinberg:1987}. 

Notice that we can interpret the condition for a network to be dynamically nontrivial as saying that its reaction vectors must be {\em positively dependent}: the zero vector in $\mathbb{R}^n$ can be written as a positive combination of the $m$ reaction vectors. In particular, a dynamically nontrivial network with $m$ reactions has rank at most $m-1$. Moreover, as we will see in the next subsection, a mass-action system with $m$ reactions and rank $m$ admits no periodic orbits, positive or otherwise. Since in this paper we are interested in networks with three reactions with the potential for periodic orbits, the networks of interest have rank at most two. On the other hand, since the differential equations we investigate are autonomous and have a unique solution for each initial condition in $\mathbb{R}^n_{\geq0}$, periodic solutions can only occur for networks of rank at least two. Thus, our main focus is on rank-two networks. In \Cref{sec:rank_two} we discuss some properties of rank-two mass-action systems.

\subsection{Mass-action systems}
\label{subsec:MA}

Assuming \emph{mass-action kinetics}, a positive number, termed the \emph{rate constant}, is associated with each reaction. The species \emph{concentration} $x \in \mathbb{R}^n_{\geq0}$ then evolves over time according to the autonomous ordinary differential equation
\begin{align}\label{eq:mass_action_ode}
\dot{x} = \Gamma (\kappa \circ x^{\Gamma_l^\top}),
\end{align}
where $\kappa\in\mathbb{R}^m_+$ is the vector of the rate constants. By a \emph{mass-action system} we mean a network with rate constants, or the differential equation \eqref{eq:mass_action_ode} itself; this should cause no confusion.

It can be shown that both the positive orthant $\mathbb{R}^n_+$ and the nonnegative orthant $\mathbb{R}^n_{\geq0}$ are forward invariant under \eqref{eq:mass_action_ode}. In fact, solutions with a positive initial condition are confined to the positive stoichiometric class of the initial condition for all $t \geq 0$. It is also well-known that given any (relatively open) face $F$ of $\mathbb{R}^n_{\geq 0}$, the mass-action vector field on $F$  is either nowhere tangent to $F$, or everywhere tangent to $F$, in which case $F$ is locally invariant and forward invariant. If we restrict attention to any such locally invariant face then, by removing species whose concentrations are zero on the face and reactions involving these species, we obtain either a mass-action system involving fewer species; or an \lq\lq empty\rq\rq system where no reactions proceed and hence $F$ consists entirely of equilibria. 

The previous construction sometimes allows us to extend claims about the positive orthant to the nonnegative orthant as a whole. For example, we observe that a mass-action system with $m$ reactions and rank $m$ (i.e., with linearly independent reaction vectors) necessarily forbids periodic orbits. That positive periodic orbits are forbidden is immediate as the network is dynamically trivial. However, periodic orbits are also forbidden on any locally invariant face of $\mathbb{R}^n_{\geq 0}$: restricting attention to such a face we obtain either a system which again has linearly independent reaction vectors and hence is dynamically trivial; or one where no reactions proceed and all points are equilibria. We can infer that a three-reaction mass-action system with a periodic orbit must have rank two.

\subsection{The reduced Jacobian determinant of a mass-action system}

We will refer to a network with $n$ species, $m$ reactions, and rank $r$ as an $(n,m,r)$ network. The Jacobian matrix of an $(n,m,r)$ mass-action system is, at each point of $\mathbb{R}^n_{\geq 0}$, an $n \times n$ matrix of rank at most $r$. We are interested in the dynamics of such a system restricted to stoichiometric classes, and hence in the action of its Jacobian matrices on the stoichiometric subspace. Fixing any $x_0 \in \mathbb{R}^n_{\geq 0}$, the \emph{reduced Jacobian determinant} of the system at $x_0$ is the determinant of the Jacobian matrix at $x_0$ regarded as a linear transformation on the stoichiometric subspace. Given any basis for the stoichiometric subspace, we can write down a matrix representation of this linear transformation. Since all such matrices are similar, we abuse notation by referring to any one of them as the \emph{reduced Jacobian matrix} of the system at $x_0$. The reduced Jacobian determinant is then just the determinant of any reduced Jacobian matrix. Equivalently, it is the product of the $r$ eigenvalues of the Jacobian matrix associated with the stoichiometric subspace. A number of equivalent formulations are given in a more general setting in \cite[Section 2.2]{banaji:pantea:2016}. We refer to an equilibrium of a mass-action system as {\em nondegenerate} if the reduced Jacobian determinant, evaluated at the equilibrium, is nonzero.

\subsection{Nondegenerate \texorpdfstring{$(n,n+1,n)$}{} networks}

The class of $(n,n+1,n)$ networks is analysed in \cite[Section 3]{banaji:boros:2023}. A dynamically nontrivial $(n,n+1,n)$ network is called \emph{nondegenerate} if its source complexes are affinely independent, and \emph{degenerate} otherwise. The terminology is justified by the following result (see \cite[Lemma 3.1 and Remark 3.2]{banaji:boros:2023}).

\begin{lemma}\label{lem:n_n+1_n_equil}
Consider a dynamically nontrivial $(n,n+1,n)$ network. Then the following statements hold.
\begin{enumerate}[label = {(\alph*)}]
\item If the network is nondegenerate then the associated mass-action system has a unique positive equilibrium for all choices of rate constants, and this equilibrium is nondegenerate. 
\item If the network is degenerate then the associated mass-action system has no isolated positive equilibria.
\end{enumerate}
\end{lemma}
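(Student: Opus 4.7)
The plan is to reduce the positive-equilibrium problem to linear algebra via a logarithmic substitution. Because $\rank\Gamma = n$ and the network is dynamically nontrivial, $\ker\Gamma$ is one-dimensional and spanned by a positive vector $v$. Thus $x \in \mathbb{R}^n_+$ is an equilibrium of \eqref{eq:mass_action_ode} if and only if $\kappa\circ x^{\Gamma_l^\top} = \lambda v$ for some $\lambda > 0$. Taking entrywise logarithms turns this into the $(n+1) \times (n+1)$ linear system
\[
\bigl(\Gamma_l^\top \,\bigm|\, -\mathbf{1}\bigr)\begin{pmatrix}\log x \\ \log\lambda\end{pmatrix} = \log(v/\kappa),
\]
in the unknowns $(\log x, \log\lambda) \in \mathbb{R}^n \times \mathbb{R}$. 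The key observation is that this coefficient matrix is invertible if and only if the columns of $\Gamma_l$ (the source complexes) are affinely independent---both conditions say that no affine hyperplane of $\mathbb{R}^n$ contains every source complex.

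For part (a), invertibility gives a unique solution $(\log x, \log\lambda)$ for every $\log(v/\kappa)$, hence a unique positive equilibrium for every choice of rate constants. For nondegeneracy, the Jacobian at the equilibrium factors as $J = \lambda\,\Gamma\,\mathrm{diag}(v)\,\Gamma_l^\top\,\mathrm{diag}(x)^{-1}$; since the stoichiometric subspace is all of $\mathbb{R}^n$, $J$ itself is a reduced Jacobian matrix, so it suffices to show $\det\bigl(\Gamma\,\mathrm{diag}(v)\,\Gamma_l^\top\bigr) \neq 0$. The Cauchy--Binet formula yields
\[
\det\bigl(\Gamma\,\mathrm{diag}(v)\,\Gamma_l^\top\bigr) = \sum_{j=1}^{n+1}\Bigl(\prod_{k \neq j} v_k\Bigr)\det(\Gamma|_{-j})\,\det(\Gamma_l|_{-j}),
\]
where $\Gamma|_{-j}$ denotes $\Gamma$ with its $j$th column removed. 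Because $v$ is a positive generator of $\ker\Gamma$, there is a single nonzero constant $c$ with $\det(\Gamma|_{-j}) = c(-1)^{j+1}v_j$ for all $j$; substituting collapses the sum, up to a nonzero factor, to $\sum_{j=1}^{n+1}(-1)^{j+1}\det(\Gamma_l|_{-j})$, which is, up to sign, the determinant of the $(n+1)\times(n+1)$ matrix obtained from $\Gamma_l$ by appending a row of ones. This determinant is nonzero precisely when the sources are affinely independent, completing (a).

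For part (b), affine dependence of the source complexes means the coefficient matrix $(\Gamma_l^\top \mid -\mathbf{1})$ has a nontrivial kernel element $(\alpha,\tau) \in \mathbb{R}^n \times \mathbb{R}$ satisfying $\Gamma_l^\top\alpha = \tau\mathbf{1}$. Note that $\alpha \neq 0$, since $\alpha = 0$ would force $\tau = 0$ as well. If any positive equilibrium $x^*$ (with multiplier $\lambda^*$) exists, then the smooth curve $x(t) \in \mathbb{R}^n_+$ defined by $x(t)_i := x^*_i e^{t\alpha_i}$ satisfies $\Gamma_l^\top\log x(t) = \Gamma_l^\top\log x^* + t\tau\mathbf{1}$, so $\kappa\circ x(t)^{\Gamma_l^\top} = \lambda^* e^{t\tau}\,v$. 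Every point of this curve is therefore a positive equilibrium, so no positive equilibrium can be isolated.

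I expect the main obstacle to be the sign and cofactor bookkeeping in the Cauchy--Binet step of part (a), needed to obtain the clean identification with the affine-independence determinant; once that identity is in place, both parts reduce to short linear-algebraic arguments.
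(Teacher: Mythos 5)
Your proof is correct and complete: the logarithmic change of variables reducing positive equilibria to the linear system with coefficient matrix $\bigl(\Gamma_l^\top \mid -\mathbf{1}\bigr)$, the Cauchy--Binet evaluation of $\det\bigl(\Gamma\,\mathrm{diag}(v)\,\Gamma_l^\top\bigr)$ using the cofactor description of $\ker\Gamma$, and the explicit curve of equilibria in the degenerate case all check out. Note that the paper does not prove this lemma itself but cites it from \cite[Lemma 3.1 and Remark 3.2]{banaji:boros:2023}; your argument is essentially the standard one used there, and your part (a) computation is the $(n,n+1,n)$ analogue of the reduced-Jacobian-determinant formula the paper derives in \Cref{subsec:detJred}.
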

We remark that as a consequence of \Cref{lem:sources_on_a_line} below, affine independence of the sources is a necessary condition for oscillation in any $(n,3,2)$ mass-action system. Thus nondegeneracy of the network is necessary for oscillation in $(2,3,2)$ networks.

\section{Rank-two mass-action systems} \label{sec:rank_two}

In this section, we derive a few properties of mass-action systems whose underlying network has rank two, focussing on necessary conditions for periodic orbits, or isolated periodic orbits.

\subsection{Periodic orbits in rank-two mass-action systems}\label{subsec:saddle_no_periodic}

We say that a mass-action system ``admits'' a periodic orbit if it has a periodic orbit for some choice of rate constants. Observe that a periodic orbit in a rank-two mass-action system without any trivial species must necessarily be positive (and, consequently, the system must be dynamically nontrivial). This follows from the simple observation that the intersection of any stoichiometric class with a proper face $F$ of $\mathbb{R}^n_{\geq 0}$ must have dimension less than two unless all species whose concentrations vanish everywhere on $F$ are trivial. 


We refer to a positive equilibrium of a rank-two mass-action system as a \emph{saddle} if the reduced Jacobian determinant, evaluated at the equilibrium, is negative. Equivalently, one of the nontrivial eigenvalues associated with the equilibrium is positive, while the other is negative. A periodic orbit in a rank-two system must contain at least one equilibrium in its interior on the stoichiometric class on which it resides, and not all of these equilibria can be saddles (see e.g.\ \cite[Section 3.5]{farkas:1994}). Thus, positive periodic orbits are ruled out in a rank-two mass-action system where all positive equilibria are saddles. If, additionally, the network has no trivial species, then (by our observations in the previous paragraph) periodic orbits on the boundary of $\mathbb{R}^n_{\geq 0}$ are ruled out too. We will frequently use these observations to rule out periodic orbits in mass-action systems.

\subsection{Sources on a line}

We make an observation about rank-two mass-action systems whose source complexes lie on a line. There is no assumption about the number of species or reactions, or about the molecularities of the complexes.

\begin{lemma}\label{lem:sources_on_a_line}
A rank-two mass-action system whose source complexes lie on a line admits no periodic orbit.
\end{lemma}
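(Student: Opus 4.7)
My plan is to exploit a factorisation that appears because all source complexes lie on a line. Writing the sources as $a_j = a + t_j b$ for $j=1,\ldots,m$ with fixed $a,b\in\mathbb{R}^n$ and scalars $t_j\in\mathbb{R}$, the source monomials factor as $x^{a_j} = x^a\,(x^b)^{t_j}$ on $\mathbb{R}^n_+$. Setting $u(x)=x^a$ and $v(x)=x^b$, both positive on $\mathbb{R}^n_+$, I would rewrite the mass-action ODE \eqref{eq:mass_action_ode} as
\[
\dot x \;=\; u(x)\,g(v(x)), \qquad g(s) \;:=\; \sum_{j=1}^{m} \kappa_j\, s^{t_j}\, \Gamma_{\cdot j},
\]
so that at every point of $\mathbb{R}^n_+$ the velocity is a strictly positive scalar multiple of a value of the one-parameter curve $g\colon\mathbb{R}_+\to\mathbb{R}^n$.

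The key step is to dot this equation with $b$, yielding $\dot v = u(x)\,h(v)$ with $h(s) := b\cdot g(s)$. After the strictly positive time reparametrisation $d\tau = u(x(t))\,dt$, the scalar quantity $v$ obeys the \emph{autonomous scalar} ODE $dv/d\tau = h(v)$. Scalar autonomous ODEs admit only monotonic solutions, so $v$ is monotonic along every trajectory in $\mathbb{R}^n_+$. On any periodic orbit, $v$ must therefore be constant, say $v\equiv v^*$.

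Once $v$ is locked at $v^*$, the ODE collapses to $\dot x = u(x)\,g(v^*)$, a strictly positive scalar times the fixed vector $g(v^*)$. I would then write $x(t)-x(0) = \bigl(\int_0^t u(x(s))\,ds\bigr)\,g(v^*)$, which is a strictly monotonic parametrisation of a segment in the direction $g(v^*)$ (the case $g(v^*)=0$ yields an equilibrium, not a periodic orbit). This contradicts periodicity and settles the lemma for orbits in $\mathbb{R}^n_+$. For a periodic orbit on a proper invariant face $F$ of $\mathbb{R}^n_{\geq0}$, I would restrict to $F$: only reactions whose source lies in $F$ survive, and the projections of their sources onto the species active on $F$ still lie on a line (possibly collapsing to a point, in which case $\dot x$ on $F$ is already along a fixed direction), so the same factorisation-plus-monotonicity argument applies in the relative interior of $F$.

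The hard part is really just spotting the factorisation and recognising that dotting with $b$ eliminates the dependence on $u$, producing an autonomous scalar equation for $v$ after a positive time change; once this is noticed, monotonicity of one-dimensional flows does all the work. The only genuine edge case is a truly degenerate line of sources ($b=0$, i.e., all sources coincide), but then the factorisation immediately gives $\dot x = u(x)\,w$ for a single fixed $w\in\mathbb{R}^n$ and the no-periodic-orbit conclusion is direct.
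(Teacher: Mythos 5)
There is a genuine gap at the key step. You define $v(x)=x^b=x_1^{b_1}\cdots x_n^{b_n}$ and then claim that dotting $\dot x = u(x)\,g(v(x))$ with $b$ yields the closed scalar equation $\dot v = u(x)\,h(v)$. But $b\cdot\dot x$ is the time derivative of the \emph{linear} function $b\cdot x$, not of the monomial $x^b$; by the chain rule,
\[
\frac{d}{dt}\,x^b \;=\; x^b\sum_{i=1}^n\frac{b_i\,\dot x_i}{x_i}\;=\;u(x)\,x^b\sum_{i=1}^n\frac{b_i\,g_i(v(x))}{x_i},
\]
which depends on $x$ through the individual coordinates $x_i$ and not only through $v(x)$ (already for $b=(1,1)$ one has $\frac{d}{dt}(x_1x_2)=\dot x_1x_2+x_1\dot x_2\neq\dot x_1+\dot x_2$). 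Neither $v=x^b$ nor $\ell=b\cdot x$ satisfies an autonomous scalar ODE after your time change, so the monotonicity conclusion --- and everything built on it, including ``$v$ is constant on a periodic orbit, hence the orbit is a ray'' --- does not follow. A telling symptom is that your argument never uses the rank-two hypothesis and would prove the lemma for networks of arbitrary rank; the obstruction you are passing over is exactly why that hypothesis is needed.

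For comparison, the paper's proof performs the same factorisation and division by $x^{\alpha_1}$ (your $u$), so that the right-hand side depends on $x$ only through the scalar $x^\alpha$ (your $v$); it then uses rank two to foliate $\mathbb{R}^n_+$ by two-dimensional invariant affine subspaces (the positive stoichiometric classes) and invokes \cite[Proposition 1]{nitecki:1978}, a planar result excluding periodic orbits for vector fields whose value depends on position only through a single scalar function. Your treatment of the boundary faces and of the degenerate case $b=0$ mirrors the paper's and is fine; it is only the interior step that must be replaced, e.g.\ by restricting to a stoichiometric class and citing such a planar result (or supplying a proof of it), rather than by the claimed scalar ODE for $v$.
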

\begin{proof}
Since the positive orthant $\mathbb{R}^n_+$ is forward invariant, a periodic orbit lies either entirely in $\mathbb{R}^n_+$ or on the boundary of $\mathbb{R}^n_+$. In the latter case, the periodic orbit must lie entirely in some relatively open proper face of $\mathbb{R}^n_{\geq 0}$ (see \Cref{subsec:MA}).

We first show that the system admits no periodic orbit in $\mathbb{R}^n_+$. The general form of the mass-action differential equation for a network with $n$ species and $m$ reactions is
\begin{align*}
\dot{x}_i = \sum_{j=1}^m \beta_{ij} x^{\alpha_j} \quad (i = 1, \ldots, n),
\end{align*}
where $\alpha_j \in \mathbb{R}^n$ represents the $j$th source complex ($j=1,\ldots,m$). Since all source complexes lie on a line, there exist $s_1, \ldots, s_m\in \mathbb{R}$ and $\alpha \in \mathbb{R}^n$ such that $\alpha_j = \alpha_1 + s_j \alpha$ (here $s_1=0$). Thus, after division by the positive scalar function $x^{\alpha_1}$, we are left with
\begin{align*}
\dot{x}_i = f_i(x^\alpha) \quad (i = 1, \ldots, n),
\end{align*}
a differential equation whose r.h.s.\ depends on $x\in\mathbb{R}^n_+$ only through the scalar $x^\alpha$. Since the rank of the network is two, the positive orthant is foliated by two-dimensional invariant linear manifolds, the positive stoichiometric classes. By \cite[Proposition 1]{nitecki:1978}, none of these contains a periodic orbit.

Finally, we argue that a periodic orbit on the boundary of $\mathbb{R}^n_{\geq0}$ is not possible either. Suppose, by way of contradiction, that some proper $k$-dimensional (relatively open) face $F$ of $\mathbb{R}^n_{\geq 0}$ includes a periodic orbit. Clearly, $k \geq 2$ and, by remarks in \Cref{subsec:MA}, the face $F$ must be locally invariant. Restricting attention to $F$ (which we identify with $\mathbb{R}^k_+$), and removing species whose concentrations vanish on $F$ and reactions involving these species, we can regard the system as a mass-action system on $k$ species, of rank at most two, and with a periodic orbit which now lies in $\mathbb{R}^k_+$. But this is ruled out by the arguments in \Cref{subsec:MA} and in the previous paragraph. This concludes the proof.
\end{proof}

\subsection{The reduced Jacobian determinant of a three-reaction system} \label{subsec:detJred}

Three-reaction, rank-two, mass-action systems are the class of systems of main interest in this paper. In this subsection, we derive a formula for the reduced Jacobian determinant of such a system at any positive equilibrium, and demonstrate how this formula can be applied to rule out oscillation. 

Since positive equilibria are ruled out for dynamically trivial networks, we consider a dynamically nontrivial $(n,3,2)$ mass-action system, with no assumptions on the molecularity of complexes. W.l.o.g.\ we may assume that the first and the second rows of the stoichiometric matrix $\Gamma\in\mathbb{R}^{n \times 3}$, say $[c_1,c_2,c_3]$ and $[d_1,d_2,d_3]$, form a basis for its row-space, and write
\begin{align*}
\Gamma = \widetilde{\Gamma}\begin{bmatrix} c_1 & c_2 & c_3 \\ d_1 & d_2 & d_3 \end{bmatrix},
\end{align*}
where $\widetilde{\Gamma}\in\mathbb{R}^{n\times 2}$ has rank two, and its top $2 \times 2$ block is the identity matrix. Defining $c=[c_1,c_2,c_3]^\top$ and $d=[d_1,d_2,d_3]^\top$, the kernel of $\Gamma$ is spanned by $u=c \times d$ (i.e., $u_1 = c_2 d_3 - c_3 d_2$, $u_2 = c_3 d_1 - c_1 d_3$, $u_3 = c_1 d_2 - c_2 d_1$). Since the network is dynamically nontrivial, $u \in \mathbb{R}^3_+ \cup \mathbb{R}^3_-$. Fix $\kappa \in \mathbb{R}^3_+$ and let $\bar{x} \in \mathbb{R}^n_+$ be an equilibrium, i.e., $\kappa \circ \bar{x}^{\Gamma_l^\top} = \mu u$ for some nonzero scalar $\mu$. We may use $\widetilde{\Gamma}$ to define local coordinates on the stoichiometric class of $\bar{x}$ in the natural way and obtain the reduced Jacobian matrix at $\bar{x}$:
\begin{align*}
J_{\mathrm{red}} = \mu \begin{bmatrix} c^\top \\ d^\top \end{bmatrix} \Delta_u \Gamma_l^\top \Delta_{1/\bar{x}}\widetilde{\Gamma},
\end{align*}
where $\Delta_u \in \mathbb{R}^{3\times 3}$ and $\Delta_{1/\bar{x}} \in \mathbb{R}^{n\times n}$ are the diagonal matrices with $u_j$ ($j=1,2,3$) and $1/\bar{x}_i$ ($i=1,\ldots,n$) on their diagonal, respectively (for more details, see \cite[Appendix A]{banaji:pantea:2016} and \cite[Sections 2 and 3]{banaji:boros:2023}). Writing $a_{i\cdot}$ for the $i$th row of $\Gamma_l$, application of the Cauchy--Binet formula leads to
\begin{align}\label{eq:detJred}
\det J_{\mathrm{red}} = \mu |\mu| |u_1 u_2 u_3|\sum_{i<j}\frac{\widetilde{\Gamma}[\{i,j\},\{1,2\}]}{\bar{x}_i \bar{x}_j}(\bm{1}\cdot (a_{i\cdot} \times a_{j\cdot})),
\end{align}
where $\widetilde{\Gamma}[\{i,j\},\{1,2\}]$ is the determinant of the $2 \times 2$ submatrix of $\widetilde{\Gamma}$ that is formed of its $i$th and $j$th row, $\bm{1}$ is the vector of ones in $\mathbb{R}^3$, and we also used the fact that $\mu u_k = |\mu u_k|$. Note that the sign of $\bm{1}\cdot (a_{i\cdot} \times a_{j\cdot})$ tells us about the orientation of the three source complexes projected onto the $(i,j)$th coordinate. We remark that any choice of privileged species must lead to the same value of $\det J_{\mathrm{red}}$.

In the special case $n=2$, the only positive stoichiometric class is $\mathbb{R}^2_+$ itself, $\widetilde{\Gamma}$ is the identity matrix, and the reduced Jacobian determinant is just the Jacobian determinant. Hence,
\begin{align}\label{eq:detJ_planar}
    \det J = \frac{\mu |\mu| |u_1 u_2 u_3|}{\bar{x}_1 \bar{x}_2}(\bm{1}\cdot (a_{1\cdot} \times a_{2\cdot})).
\end{align}
From this expression we observe that for $J$ to have a nonzero determinant, the three source complexes must be affinely independent (i.e., they must span a triangle). Furthermore, since $\bm{1} \cdot (c \times d)$ has the same sign as $\mu$, $\det J > 0$ is positive if and only if the three reaction vectors $[c_1,d_1]^\top$, $[c_2,d_2]^\top$, $[c_3,d_3]^\top$ span a triangle with the same orientation as the triangle spanned by the three source complexes. If the orientations of the triangles spanned by the sources and the reaction vectors are opposite to each other, then $\det J<0$, and the equilibrium is a saddle. We illustrate the two possibilities with the following two networks: on the left is the LVA (see \eqref{eq:LVA_basic} below); and on the right is a network with the same complexes as the LVA, but different reaction vectors. Any positive equilibrium of the latter network is necessarily a saddle.
\begin{align*}
    \begin{tikzpicture}

\draw [step=1, gray, very thin] (0,0) grid (3.5,2.5);
\draw [ -, black] (0,0)--(3.5,0);
\draw [ -, black] (0,0)--(0,2.5);

\node[inner sep=0,outer sep=1] (P1) at (2,0) {\large \textcolor{blue}{$\bullet$}};
\node[inner sep=0,outer sep=1] (P2) at (3,0) {\large \textcolor{blue}{$\bullet$}};
\node[inner sep=0,outer sep=1] (P3) at (1,1) {\large \textcolor{blue}{$\bullet$}};
\node[inner sep=0,outer sep=1] (P4) at (0,2) {\large \textcolor{blue}{$\bullet$}};
\node[inner sep=0,outer sep=1] (P5) at (0,1) {\large \textcolor{blue}{$\bullet$}};
\node[inner sep=0,outer sep=1] (P6) at (0,0) {\large \textcolor{blue}{$\bullet$}};

\node [below]  at (P1) {$2\mathsf{X}$};
\node [below] at (P2) {$3\mathsf{X}$};
\node [above right] at (P3) {$\mathsf{X}+\mathsf{Y}$};
\node [left]  at (P4) {$2\mathsf{Y}$};
\node [left]  at (P5) {$\mathsf{Y}$};
\node [left]  at (P6) {$\mathsf{0}$};

\draw[arrows={-stealth},very thick,blue] (P1) to node {} (P2);
\draw[arrows={-stealth},very thick,blue] (P3) to node {} (P4);
\draw[arrows={-stealth},very thick,blue] (P5) to node {} (P6);

\begin{scope}[shift={(6,0)}]

\draw [step=1, gray, very thin] (0,0) grid (3.5,2.5);
\draw [ -, black] (0,0)--(3.5,0);
\draw [ -, black] (0,0)--(0,2.5);

\node[inner sep=0,outer sep=1] (P1) at (2,0) {\large \textcolor{blue}{$\bullet$}};
\node[inner sep=0,outer sep=1] (P2) at (3,0) {\large \textcolor{blue}{$\bullet$}};
\node[inner sep=0,outer sep=1] (P3) at (1,1) {\large \textcolor{blue}{$\bullet$}};
\node[inner sep=0,outer sep=1] (P4) at (0,0) {\large \textcolor{blue}{$\bullet$}};
\node[inner sep=0,outer sep=1] (P5) at (0,1) {\large \textcolor{blue}{$\bullet$}};
\node[inner sep=0,outer sep=1] (P6) at (0,2) {\large \textcolor{blue}{$\bullet$}};

\node [below]  at (P1) {$2\mathsf{X}$};
\node [below] at (P2) {$3\mathsf{X}$};
\node [above right] at (P3) {$\mathsf{X}+\mathsf{Y}$};
\node [left]  at (P4) {$\mathsf{0}$};
\node [left]  at (P5) {$\mathsf{Y}$};
\node [left]  at (P6) {$2\mathsf{Y}$};

\draw[arrows={-stealth},very thick,blue] (P1) to node {} (P2);
\draw[arrows={-stealth},very thick,blue] (P3) to node {} (P4);
\draw[arrows={-stealth},very thick,blue] (P5) to node {} (P6);

\end{scope}

\end{tikzpicture}
\end{align*}

To illustrate formula \eqref{eq:detJred} in the case where $n\geq3$, consider the $(4,3,2)$ network
\begin{align*}
\begin{tikzpicture}[scale=0.6]

\node[left]  (P1) at (0, 0) {$2\mathsf{X}$};
\node[right] (P2) at (1, 0) {$3\mathsf{X}$};
\node[left]  (P3) at (0,-1) {$\mathsf{X+Y}$};
\node[right] (P4) at (1,-1) {$\mathsf{Z+W}$};
\node[left]  (P5) at (0,-2) {$\mathsf{Z+W}$};
\node[right] (P6) at (1,-2) {$\mathsf{Y}$};

\draw[->] (P1) to node[above] {} (P2);
\draw[->] (P3) to node[above] {} (P4);
\draw[->] (P5) to node[above] {} (P6);

\end{tikzpicture}
\end{align*}
which will play a role in \Cref{lem:4species_saddle} below. Here,
\begin{align*}
\Gamma = \begin{bmatrix*}[r]
    1 & -1 &  0 \\
    0 & -1 &  1 \\
    0 &  1 & -1 \\
    0 &  1 & -1
\end{bmatrix*} = 
\begin{bmatrix*}[r]
    1 &  0 \\
    0 &  1 \\
    0 & -1 \\
    0 & -1
\end{bmatrix*}
\begin{bmatrix*}
    1 &  -1 & 0 \\
    0 &  -1 & 1 
\end{bmatrix*}
\text{ and }
\Gamma_l = 
\begin{bmatrix*}
    2 & 1 & 0 \\
    0 & 1 & 0 \\
    0 & 0 & 1 \\
    0 & 0 & 1
\end{bmatrix*}.
\end{align*}
Therefore, with this choice of species ordering, $u_1=u_2=u_3=-1$ and $\mu<0$. Since the minors $\widetilde{\Gamma}[\{i,j\},\{1,2\}]$ vanish when $2 \leq i < j \leq 4$, only the choices $i=1$ and $j=2,3,4$ give nonzero terms in \eqref{eq:detJred}. We obtain
\begin{align*}
\det J_{\mathrm{red}} = \mu |\mu| \left(\frac{2}{\bar{x}\bar{y}} + \frac{1}{\bar{x}\bar{z}} + \frac{1}{\bar{x}\bar{w}}\right),
\end{align*}
which is negative since $\mu<0$. Hence, every positive equilibrium is a saddle within its stoichiometric class and, by the observations in \Cref{subsec:saddle_no_periodic}, the network admits no periodic orbits. 

\subsection{Necessary reactions for isolated periodic orbits in quadratic systems}

In this subsection, we use a divergence argument and the Bendixson--Dulac Test to show that the presence of certain reactions is necessary for the occurrence of isolated periodic orbits in rank-two, quadratic, mass-action systems. In the first result, we make no assumption on the number of reactions in the system. 

\begin{lemma}\label{lem:divergence}
Assume that a rank-two, quadratic network with no trivial species has no reaction of the form
\begin{align}\label{eq:rxn_pos_div}
2\mathsf{X}_j \longrightarrow (2+c_j)\mathsf{X}_j + \sum_{i\neq j}c_i\mathsf{X}_i \text{ with } c_j>0 \text{ and } c_i\geq0 \text{ for }i\neq j.
\end{align}
Suppose that its associated mass-action system admits a periodic orbit. Then the network has either two or three species, and the mass-action differential equation reads either as
\begin{align}\label{eq:divergence_odes}
\begin{aligned}
\begin{tikzpicture}

\node at (0,0)
{$\begin{aligned}
\dot{x}_1 &= x_1(r_1+b_{12} x_2) \\
\dot{x}_2 &= x_2(r_2+b_{21} x_1)
\end{aligned}$};
\node at (0,-1.5) {with $r_1 r_2<0, r_1 b_{12}<0, r_2 b_{21}<0$};

\node at (4,0) {or};

\node at (8,0)
{$\begin{aligned}
    \dot{x}_1 &= x_1(b_{12} x_2 + b_{13} x_3) \\
    \dot{x}_2 &= x_2(b_{21} x_1 + b_{23} x_3) \\
    \dot{x}_3 &= x_3(b_{31} x_1 + b_{32} x_2)
\end{aligned}$};
\node at (8,-1.75) {$\begin{array}{c}\text{with } b_{12}b_{13} < 0, b_{21}b_{23} < 0, b_{31}b_{32} < 0 \\ \\[-3mm] \text{and } b_{12}b_{23}b_{31} + b_{13}b_{21}b_{32} = 0 \end{array}$};

\end{tikzpicture}
\end{aligned}
\end{align}
Consequently, a rank-two, quadratic, mass-action system with no trivial species and no reaction of the form \eqref{eq:rxn_pos_div} admits no isolated periodic orbits.
\end{lemma}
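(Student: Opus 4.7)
The plan is to apply a Bendixson--Dulac argument with the Dulac factor $B(x)=1/(x_1\cdots x_n)$ on the positive orthant. A direct reaction-by-reaction computation gives that a reaction with source $\alpha$, reaction vector $c$, and rate constant $\kappa$ contributes
\begin{align*}
    \kappa\, x^{\alpha}\sum_{i=1}^{n} c_i(\alpha_i-1)\,\frac{1}{x_i}
\end{align*}
to $\operatorname{div}(BF)/B$. Splitting by source molecularity, this contribution is automatically $\leq 0$ on $\mathbb{R}^n_+$ for sources $0$, $\mathsf{X}_j$, and $\mathsf{X}_j+\mathsf{X}_k$ (all the relevant $c_i$ being $\geq 0$ since the source contains no $\mathsf{X}_i$); for source $2\mathsf{X}_j$ it equals $\kappa\bigl[c_jx_j - x_j^2\sum_{i\neq j}c_i/x_i\bigr]$, which is $\leq 0$ iff $c_j\leq 0$, i.e., iff the reaction is not of the form \eqref{eq:rxn_pos_div}. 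Thus $\operatorname{div}(BF)\leq 0$ on $\mathbb{R}^n_+$ under the hypothesis.

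Now suppose the system admits a periodic orbit $\gamma$. By restricting to the minimal invariant face of $\mathbb{R}^n_{\geq 0}$ containing $\gamma$ and removing any species made trivial by this restriction (both operations preserve our hypotheses), we may assume $\gamma$ lies in the interior of a positive 2D stoichiometric class $S$. Since $D(BF)$ has rank $\leq 2$ with image in the stoichiometric subspace, its Euclidean trace equals the intrinsic 2D divergence of $BF$ on $S$. Green's theorem applied to the region $D\subset S$ bounded by $\gamma$ yields $\int_D\operatorname{div}(BF)\,dA=0$; combined with global nonpositivity this forces $\operatorname{div}(BF)\equiv 0$ on $D$, hence on all of $S$ by analyticity. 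Each per-reaction contribution $T_r$ is itself $\leq 0$, so each vanishes identically on $S$; and each $T_r$ splits into subterms that are (nonpositive) scalar multiples of rational monomials strictly positive on $S$, forcing every scalar coefficient to vanish. This rules out sources $0$ and $2\mathsf{X}_j$ entirely, forces each $\mathsf{X}_j$-sourced reaction to have the form $\mathsf{X}_j\to m\mathsf{X}_j$ with $m\in\{0,2,3,\ldots\}$, and each $(\mathsf{X}_j+\mathsf{X}_k)$-sourced reaction to have reaction vector supported on $\{j,k\}$. Consequently the ODE has the generalized Lotka--Volterra form $\dot x_i = x_i\bigl(r_i+\sum_{k\neq i}b_{ik}x_k\bigr)$ with $b_{ii}=0$.

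The main remaining task --- and the principal technical obstacle --- is to combine rank two with the absence of trivial species to conclude $n\in\{2,3\}$ and extract the sign and cyclic relations of \eqref{eq:divergence_odes}. Any conservation vector $w\in V^{\perp}\setminus\{0\}$ satisfies $w_ir_i=0$ and $w_ib_{ik}+w_kb_{ki}=0$; the no-trivial-species assumption forces $|\operatorname{supp}(w)|\geq 2$. For $n=2$, the Jacobian at a positive equilibrium has zero trace automatically (as $b_{ii}=0$), and a pure-imaginary spectrum occurs precisely under the stated sign conditions. For $n=3$, eliminating the $w_i$'s from the three relations $w_ib_{ik}+w_kb_{ki}=0$ around the cycle $1\to 2\to 3\to 1$ yields the cyclic identity $b_{12}b_{23}b_{31}+b_{13}b_{21}b_{32}=0$; demanding that $w$ have uniformly definite sign then produces the stated sign conditions. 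The case $n\geq 4$ is excluded by a more delicate case analysis: the combination of rank two, $b_{ii}=0$, and absence of trivial species forces the bimolecular interaction graph of the system to contain at most three non-trivial species, reducing to the two cases above. Finally, both systems in \eqref{eq:divergence_odes} admit a classical Volterra first integral, so every positive orbit near the equilibrium is periodic and no isolated periodic orbit can exist --- which establishes the concluding ``consequently'' statement.
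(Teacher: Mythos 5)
Your Dulac computation is correct and is essentially the paper's own argument: the same Dulac factor $1/(x_1\cdots x_n)$, the same per-reaction sign analysis showing that only reactions of the form \eqref{eq:rxn_pos_div} can contribute positively to the divergence, and the same conclusion that the vector field must be a Lotka--Volterra equation with no diagonal term. Your Green's-theorem derivation of the Bendixson--Dulac conclusion on the two-dimensional stoichiometric class replaces the paper's citation of an $n$-dimensional version of the test and is fine; the detour through a minimal invariant face is unnecessary, since for a rank-two network with no trivial species every periodic orbit is automatically positive (and, as written, your reduction would only bound the number of species of the \emph{restricted} network, not of the original one as the lemma asserts).

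The genuine gaps are all in the case analysis, which you flag as ``the principal technical obstacle'' and then do not carry out. (i) For $n=3$ you establish only $|\operatorname{supp}(w)|\geq 2$, which leaves one $r_i$ possibly nonzero, so you have not shown that the ODE has the form in \eqref{eq:divergence_odes}, which contains no linear terms. The paper closes this by showing that $w_i=0$ forces either a trivial species or a strictly monotone species concentration, both incompatible with a periodic orbit. (ii) Your derivation of the sign conditions $b_{12}b_{13}<0$, $b_{21}b_{23}<0$, $b_{31}b_{32}<0$ by ``demanding that $w$ have uniformly definite sign'' does not work: a conservation vector need not have definite sign, and even if $w>0$ the relations $w_ib_{ik}=-w_kb_{ki}$ constrain only the pairs $(b_{ik},b_{ki})$, not $(b_{12},b_{13})$. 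These conditions actually come from the existence of a positive equilibrium enclosed by the periodic orbit on its stoichiometric class once $r=0$: then $b_{12}\bar x_2+b_{13}\bar x_3=0$ with $\bar x_2,\bar x_3>0$, and $b_{12},b_{13}$ cannot both vanish because $\mathsf{X}_1$ is nontrivial. (iii) For $n\geq4$ the required conclusion is that \emph{no} rank-two, diagonal-free Lotka--Volterra mass-action system without trivial species exists; your sketch instead asserts, without argument, a reduction to ``at most three non-trivial species,'' which aims at the wrong conclusion (the hypotheses already exclude trivial species) and supplies no proof. The paper defers exactly this case to the proof of \cite[Theorem 4.1]{boros:hofbauer:2022a}, where the combinatorial work is done; some substitute for that argument is needed here.
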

\begin{proof}
Consider a mass-action system satisfying the hypotheses of the lemma. By remarks in \Cref{subsec:saddle_no_periodic}, each periodic orbit of the system must be positive. Denote by $f$ the vector field on $\mathbb{R}^n_{+}$ obtained by multiplying the r.h.s.\ of the system by the Dulac function $(x_1\cdots x_n)^{-1}$. 
We consider the divergence $\mathrm{div} f = \sum_{j=1}^n\frac{\partial f_j}{\partial x_j}$. For a fixed $j\in\{1,\ldots,n\}$, a reaction $\sum_{i=1}^n a_i \mathsf{X}_i \stackrel{\kappa}{\longrightarrow} \sum_{i=1}^n (a_i+c_i) \mathsf{X}_i$ contributes the term $c_j\kappa x_1^{a_1-1}\cdots x_n^{a_n-1}$ to $f_j$. Hence, the contribution of such a reaction to $\mathrm{div} f$ is zero if $a_j=1$ or $c_j=0$, while it is negative if either $a_j=0$ and $c_j>0$ or $a_j=2$ and $c_j<0$. These are the only two possibilities, as reactions of the form \eqref{eq:rxn_pos_div} are excluded, and consequently, no reaction makes a positive contribution to $\mathrm{div} f$. Clearly, $\mathrm{div} f$ is either everywhere negative on $\mathbb{R}^n_+$, if there is some reaction which contributes a negative term, or is identically zero on $\mathbb{R}^n_{+}$ if there is no such reaction. 

If $\mathrm{div} f$ is negative everywhere in $\mathbb{R}^n_+$ then periodic orbits in $\mathbb{R}^n_+$ are precluded by the Bendixson--Dulac Test, see \cite[Theorem 3, Remark (3)]{li:1995}.

If, on the other hand, $\mathrm{div} f \equiv 0$ on $\mathbb{R}^n_+$, then the discussion above implies that for all $j$, $c_j\neq0$ implies $a_j=1$. Equivalently, the mass-action differential equation is a Lotka--Volterra equation with no diagonal term (i.e., the monomial $x_j^2$ does not occur in the expression for $\dot{x}_j$):
\begin{align*}
    \dot{x}_j = x_j\left(r_j + \sum_{k\neq j} b_{jk} x_k\right) \text{ for }j  = 1, \ldots, n.
\end{align*}

\noindent\textbf{Case $n=2$.} In order to admit a periodic orbit the system must admit a positive equilibrium which is not a saddle, from which it easily follows that $r_1 r_2<0, r_1 b_{12}<0, r_2 b_{21}<0$, as claimed.

\noindent\textbf{Case $n=3$.} Since the rank of the network is two, there is a nonzero $d \in \mathbb{R}^3$ such that $d_1 \dot{x}_1 + d_2 \dot{x}_2 + d_3 \dot{x}_3 = 0$. Hence, $d_1 r_1 = d_2 r_2 = d_3 r_3 = 0$ and 
\begin{align*}
\begin{bmatrix}
b_{12} & b_{21} & 0 \\
b_{13} & 0 & b_{31} \\
0 & b_{23} & b_{32}
\end{bmatrix}
\begin{bmatrix}
d_1 \\ d_2 \\ d_3 
\end{bmatrix}=
\begin{bmatrix}
0 \\ 0 \\ 0
\end{bmatrix}.
\end{align*}
This implies $b_{12}b_{23}b_{31} + b_{13}b_{21}b_{32} = 0$. Further, all $d_i$ must be nonzero, for otherwise either the system has a trivial species, or some species concentration increases strictly along positive trajectories, ruling out the existence of a periodic orbit (see \cite[proof of Theorem 4.1]{boros:hofbauer:2022a}). We conclude that $r_1=r_2=r_3=0$. The existence of a positive equilibrium then implies $b_{12}b_{13} < 0, b_{21}b_{23} < 0, b_{31}b_{32} < 0$.

\noindent\textbf{Case $n\geq4$.} One finds that, in fact, there is no rank-two network without trivial species whose mass-action differential equation is a Lotka--Volterra equation with no diagonal term. See \cite[proof of Theorem 4.1]{boros:hofbauer:2022a} for the details.

The nonexistence of isolated periodic orbits for the systems in \eqref{eq:divergence_odes} follows immediately from the fact that both vector fields have (nonlinear) first integrals on $\mathbb{R}^2_+$ and $\mathbb{R}^3_+$ respectively and, in fact, restricted to two-dimensional invariant sets, are Hamiltonian. The conserved quantities are $r_1\log y - r_2\log x +b_{12} y - b_{21}x$ (in the two-species case) and $d_1d_2b_{23}\log x+d_2d_3b_{31}\log y+d_1d_3b_{12}\log z$ (in the three-species case).
\end{proof}

Notice that the only trimolecular reaction of the form \eqref{eq:rxn_pos_div} is $2\mathsf{X}_j \longrightarrow 3 \mathsf{X}_j$. Hence, we obtain the following result, which is a major step towards proving one of our main results, namely, \Cref{thm:intro_trimolec}.
\begin{corollary}\label{cor:divergence_2X_3X}
If a rank-two, trimolecular, quadratic, mass-action network with no trivial species admits an isolated periodic orbit, then there exists $j\in\{1,\ldots,n\}$ such that $2\mathsf{X}_j \longrightarrow 3 \mathsf{X}_j$ is a reaction in the network.
\end{corollary}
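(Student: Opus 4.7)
The plan is to derive this as a contrapositive of the last sentence of Lemma~\ref{lem:divergence}, combined with a short enumeration. First I would suppose that the network in question admits an isolated periodic orbit. The final assertion of Lemma~\ref{lem:divergence} then forces at least one reaction of the form \eqref{eq:rxn_pos_div} to be present, for otherwise no isolated periodic orbit could exist.

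Next I would determine which reactions of the form \eqref{eq:rxn_pos_div} are compatible with trimolecularity. The source of any such reaction is $2\mathsf{X}_j$, which has molecularity $2$ and so is automatically consistent with the quadratic hypothesis. Its target has molecularity $(2+c_j) + \sum_{i\neq j} c_i$, and trimolecularity of the network requires this to be at most $3$, i.e., $c_j + \sum_{i\neq j} c_i \leq 1$. Since $c_j$ is a positive integer and the $c_i$ (for $i\neq j$) are nonnegative integers, the only solution is $c_j = 1$ and $c_i = 0$ for all $i \neq j$. The target therefore collapses to $3\mathsf{X}_j$, so the unique trimolecular reaction of the form \eqref{eq:rxn_pos_div} is $2\mathsf{X}_j \longrightarrow 3\mathsf{X}_j$, which is exactly the conclusion of the corollary.

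I expect no substantive obstacle here: the argument is essentially bookkeeping once Lemma~\ref{lem:divergence} is in hand. The only subtlety worth flagging is that $c_j$ is required to be a \emph{strictly} positive integer in the definition of \eqref{eq:rxn_pos_div}, forcing $c_j \geq 1$ and leaving no slack in the inequality $c_j + \sum_{i \neq j} c_i \leq 1$ for any further nonzero coefficients on the target.
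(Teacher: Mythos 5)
Your argument is correct and is exactly the paper's: the corollary is stated immediately after the observation that the only trimolecular reaction of the form \eqref{eq:rxn_pos_div} is $2\mathsf{X}_j \longrightarrow 3\mathsf{X}_j$, combined with the final assertion of Lemma~\ref{lem:divergence}. Your explicit bookkeeping ($c_j + \sum_{i\neq j} c_i \leq 1$ with $c_j \geq 1$ forcing $c_j=1$ and $c_i=0$) just spells out what the paper leaves to the reader.
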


If we restrict the scope of \Cref{lem:divergence} to the three-reaction case, we can characterize those networks that lead to the differential equations in \eqref{eq:divergence_odes}. The Lotka reactions \eqref{eq:lotka_only} and the Ivanova reactions \eqref{eq:ivanova_only} give such instances. However, in addition, we find a two-parameter family of two-species networks generalising the Lotka reactions, and a new two-parameter family of three-species networks, all of which which give rise to differential equations of the form in \eqref{eq:divergence_odes}. 

\begin{lemma}\label{lem:two_param_families}
Assume that a three-reaction, quadratic network with no trivial species has no reaction of the form \eqref{eq:rxn_pos_div}. Then its associated mass-action system admits a periodic orbit if and only if, up to a permutation of the species, the network is either the Ivanova reactions or belongs to one of the following two families with parameters $c,d\geq1$:
\begin{align}\label{eq:two_param_families_left}
\begin{aligned}
\begin{tikzpicture}[scale=0.6]

\node[left]  (P1) at (0, 0) {$\mathsf{X}$};
\node[right] (P2) at (1, 0) {$(1+c)\mathsf{X}$};
\node[left]  (P3) at (0,-1) {$\mathsf{X+Y}$};
\node[right] (P4) at (1,-1) {$(1+d)\mathsf{Y}$};
\node[left]  (P5) at (0,-2) {$\mathsf{Y}$};
\node[right] (P6) at (1,-2) {$\mathsf{0}$};

\draw[->] (P1) to node[above] {} (P2);
\draw[->] (P3) to node[above] {} (P4);
\draw[->] (P5) to node[above] {} (P6);

\end{tikzpicture}
\end{aligned}
\end{align}
and
\begin{align}\label{eq:two_param_families_right}
\begin{aligned}
\begin{tikzpicture}[scale=0.6]

\node[left]  (P1) at (0, 0) {$\mathsf{Z}+\mathsf{X}$};
\node[right] (P2) at (1, 0) {$(1+c)\mathsf{X}$};
\node[left]  (P3) at (0,-1) {$\mathsf{X+Y}$};
\node[right] (P4) at (1,-1) {$\mathsf{0}$};
\node[left]  (P5) at (0,-2) {$\mathsf{Y+Z}$};
\node[right] (P6) at (1,-2) {$(1+cd)\mathsf{Y}+(1+d)\mathsf{Z}$};

\draw[->] (P1) to node[above] {} (P2);
\draw[->] (P3) to node[above] {} (P4);
\draw[->] (P5) to node[above] {} (P6);

\end{tikzpicture}
\end{aligned}
\end{align}
In the case of the Ivanova reactions and the networks in \eqref{eq:two_param_families_left}, for all rate constants, each stoichiometric class contains a unique positive equilibrium, which is a global center in its stoichiometric class. Depending on the rate constants, the networks in \eqref{eq:two_param_families_right} either have no periodic orbits; or they have, on some stoichiometric classes, a unique positive equilibrium which is a global center.
\end{lemma}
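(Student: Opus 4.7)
By \Cref{lem:divergence}, any three-reaction mass-action system meeting the hypotheses and admitting a periodic orbit has either $n=2$ or $n=3$ species and realises one of the two Lotka--Volterra-type differential equations in \eqref{eq:divergence_odes}. The plan is to reverse-engineer which three-reaction, quadratic networks can give rise to these ODEs, leaning on the sharper observation from the proof of \Cref{lem:divergence} that for every reaction and species $j$, $c_j\neq 0$ forces $a_j=1$. Since a bimolecular source $\sum_i a_i\mathsf{X}_i$ contributes the monomial $x^a$ to the rate expression of every affected species, matching the monomials in \eqref{eq:divergence_odes} to available bimolecular sources will pin down the three source complexes, and the divergence and sign constraints will then determine their targets.

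\textbf{Two species.} The nonzero monomials in the planar ODE are $x$, $y$, and $xy$, so the three reactions must have sources $\mathsf{X}$, $\mathsf{Y}$, $\mathsf{X}+\mathsf{Y}$ in some order. The constraint $c_j\neq 0\Rightarrow a_j=1$ prevents the $\mathsf{X}$-source reaction from altering the concentration of $\mathsf{Y}$ (and symmetrically for the $\mathsf{Y}$-source reaction), while the sign conditions $r_1r_2<0$, $r_1b_{12}<0$, $r_2b_{21}<0$ specify which species grows and which decays and force the $\mathsf{X}+\mathsf{Y}$-source reaction to consume one of them while producing only the other. Up to the transposition $\mathsf{X}\leftrightarrow\mathsf{Y}$, this yields \eqref{eq:two_param_families_left}, with $c,d\geq 1$ the free integer stoichiometric parameters.

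\textbf{Three species.} Only the three bimolecular sources $\mathsf{X}_i+\mathsf{X}_j$ can contribute the cross monomials $x_ix_j$ present in the 3D ODE, so each is used exactly once; the divergence constraint forces the third species to be left untouched in each reaction. The network is therefore parameterised by six integers in $\{-1\}\cup\mathbb{Z}_{\geq 1}$, and the three sign conditions $b_{ij}b_{ik}<0$ each amount to a binary ``which side wins'' choice between a pair of these parameters, giving $2^3=8$ sign patterns. The algebraic constraint $b_{12}b_{23}b_{31}+b_{13}b_{21}b_{32}=0$ then reduces to a single integer equation that I would inspect case by case. The two patterns in which every reaction has one species consumed and another produced, with the production relation forming a $3$-cycle on the species, force all positive parameters to equal $1$ and recover the Ivanova reactions (the two orientations of the cycle being related by a species transposition); each of the six remaining patterns has one reaction that consumes both of its reactants, one that produces both, and one ``normal'' reaction, and in each case the integer equation factors as $p_k=p_ip_j$, yielding a two-parameter family that matches \eqref{eq:two_param_families_right} after an appropriate species permutation.

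\textbf{Periodicity and main obstacle.} For the dynamical claims I would invoke the first integrals given at the end of the proof of \Cref{lem:divergence}. In \eqref{eq:two_param_families_left} and the Ivanova reactions the first integral is strictly convex on each positive stoichiometric class with a unique interior minimum at the (unique) positive equilibrium, so its level sets are compact closed curves giving a global center for every choice of rate constants. In \eqref{eq:two_param_families_right}, direct computation shows that the positive equilibria form a single ray through the origin, and whether a given positive stoichiometric class meets this ray depends on the sign of a specific combination of the rate constants; on classes that do meet it, the same convexity argument yields a global center, while on the other classes the absence of a positive equilibrium rules out periodic orbits by the observations in \Cref{subsec:saddle_no_periodic}. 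The main obstacle is the three-species classification: exhibiting, for each of the six mixed sign patterns, an explicit species permutation that carries it onto \eqref{eq:two_param_families_right} and checking that the integer equation always factors into the required product of positive parameters is the step that will require the most case-by-case bookkeeping.
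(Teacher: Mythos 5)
Your overall route is the same as the paper's: reduce via \Cref{lem:divergence} to the two Lotka--Volterra forms in \eqref{eq:divergence_odes}, observe that the monomials force the sources to be $\mathsf{X},\mathsf{Y},\mathsf{X+Y}$ (two species) or the three pairs $\mathsf{X}_i+\mathsf{X}_j$ (three species), parameterise the targets by integers in $\{-1\}\cup\mathbb{Z}_{\geq1}$, and run through the $2^3$ sign patterns together with the rank condition $c_{11}c_{22}c_{33}=-c_{12}c_{23}c_{31}$. The paper organises the eight patterns by how many of the "diagonal" coefficients are negative and disposes of the extra cases by species transpositions, which is exactly the bookkeeping you defer; that part of your plan is sound.

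There is, however, a genuine error in your dynamical analysis of the family \eqref{eq:two_param_families_right}. You claim that on every positive stoichiometric class that meets the equilibrium ray, "the same convexity argument yields a global center". The first integral here is $d\kappa_3\log x-\kappa_1\log y+\kappa_2\log z$, which (unlike the integrals for \eqref{eq:two_param_families_left} and the Ivanova system) has coefficients of mixed sign and is neither convex nor concave on a stoichiometric class $\mathcal{P}_D=\{x-y+cz=D\}$. Which classes meet the ray is governed by $\sgn D=\sgn(\kappa_2+\kappa_3 d-\kappa_1)$, and the nature of the resulting unique equilibrium changes with that sign: when $\kappa_1>\kappa_2+\kappa_3 d$ (so $D<0$) the restricted first integral has compact level sets and a nondegenerate interior extremum, giving a global center; but when $\kappa_1<\kappa_2+\kappa_3 d$ (so $D>0$) the Hessian of the restricted first integral at the equilibrium is indefinite, the equilibrium is a saddle, and by the observations in \Cref{subsec:saddle_no_periodic} there are no periodic orbits at all. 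Your argument would manufacture centers in this regime, contradicting the lemma's own assertion that for some rate constants these networks have no periodic orbits. The fix is a trichotomy on $\sgn(\kappa_1-\kappa_2-\kappa_3 d)$, verifying compactness of the level sets only in the first case and treating the borderline case $\kappa_1=\kappa_2+\kappa_3 d$ (where the entire equilibrium ray lies in $\mathcal{P}_0$, equilibria are non-isolated, and rays through the origin are invariant) separately.
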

\begin{proof}
Consider a reaction network satisfying the hypotheses of the lemma. Recall from \Cref{subsec:MA} that the rank of a three-reaction system that admits a periodic orbit is necessarily two. By \Cref{lem:divergence}, the network has either two or three species, and the mass-action differential equation takes one of the forms in \eqref{eq:divergence_odes}. The two-species case is straightforward: the reader may confirm that any three-reaction network without trivial species giving rise to the differential equation on the left of \eqref{eq:divergence_odes} must be of the form in \eqref{eq:two_param_families_left}. Any such network admits a first integral $d\kappa_2 x+\kappa_2 y-\kappa_3\log x - c\kappa_1\log y$ on $\mathbb{R}^2_+$, which has bounded level sets and a global minimum at the unique positive equilibrium. It follows immediately that the equilibrium is a global center. We focus on the slightly harder three-species case.

It is easily seen that if a three-reaction, three-species network leads to the differential equation on the right in \eqref{eq:divergence_odes}, it is necessarily of the form
\begin{align*}
\begin{tikzpicture}[scale=0.6]

\node[left]  (P1) at (0, 0) {$\mathsf{Z}+\mathsf{X}$};
\node[right] (P2) at (1, 0) {$(1+c_{31})\mathsf{Z}+(1+c_{11})\mathsf{X}$};
\node[left]  (P3) at (0,-1) {$\mathsf{X+Y}$};
\node[right] (P4) at (1,-1) {$(1+c_{12})\mathsf{X}+(1+c_{22})\mathsf{Y}$};
\node[left]  (P5) at (0,-2) {$\mathsf{Y+Z}$};
\node[right] (P6) at (1,-2) {$(1+c_{23})\mathsf{Y}+(1+c_{33})\mathsf{Z}$};

\draw[->] (P1) to node[above] {\footnotesize $\kappa_1$} (P2);
\draw[->] (P3) to node[above] {\footnotesize $\kappa_2$} (P4);
\draw[->] (P5) to node[above] {\footnotesize $\kappa_3$} (P6);

\node at (13,-1)
{$\begin{aligned}
\dot{x} &= x(\kappa_1 c_{11} z + \kappa_2 c_{12} y), \\
\dot{y} &= y(\kappa_2 c_{22} x + \kappa_3 c_{23} z), \\
\dot{z} &= z(\kappa_3 c_{33} y + \kappa_1 c_{31} x)
\end{aligned}$};

\end{tikzpicture}
\end{align*}
with $c_{ij}\geq -1$ and $\sgn c_{ii} = - \sgn c_{i,i+1}\neq0$. Hence, three of the six $c_{ij}$'s are equal to $-1$, and the other three are positive integers. By a short calculation, the rank of the network is two if and only if $c_{11}c_{22}c_{33}=-c_{12}c_{23}c_{31}$. If none of the $c_{ii}$'s is negative then $c_{12}=c_{23}=c_{31}=-1$ and $c_{11}=c_{22}=c_{33}=1$, leading to the Ivanova reactions. If exactly one of the $c_{ii}$'s is negative then (because of the cyclic symmetry of the species) w.l.o.g.\ we may assume $c_{22}=-1$, and $c_{11} > 0$ and $c_{33} > 0$. Hence, $c_{12}=c_{31}=-1$ and because of the rank condition, $c_{23} = c_{11} c_{33}$, leading to the family of networks in \eqref{eq:two_param_families_right}. The cases when two or three of the $c_{ii}$'s are negative can be reduced to the cases already discussed by swapping two species.

The behaviour of the Ivanova mass-action system is widely known \cite[page 630]{volpert:hudjaev:1985}. Some straightforward calculations demonstrate that the networks in \eqref{eq:two_param_families_right} indeed give rise to a differential equation with centers in some stoichiometric classes for certain rate constants. The set of positive equilibria is the ray $t(\kappa_1\kappa_3 cd,\kappa_1 c,\kappa_2)$ for $t>0$, the positive stoichiometric classes are given by $\mathcal{P}_D=\{(x,y,z) \in \mathbb{R}^3_+\colon x-y+cz=D\}$ for $D \in \mathbb{R}$, and the system has a conserved quantity $d\kappa_3\log x - \kappa_1\log y + \kappa_2\log z$. The analysis of the system reveals the following.
\begin{itemize}
    \item When $\kappa_1>\kappa_2+\kappa_3d$, there is a positive equilibrium in $\mathcal{P}_D$ if and only if $D<0$. In this case, the equilibrium is unique, and it is a global center, since the conserved quantity has compact level sets on $\mathcal{P}_D$ when $D<0$. 
    \item When $\kappa_1=\kappa_2+\kappa_3d$, the whole ray of positive equilibria lies in $\mathcal{P}_0$. In fact, every ray in $\mathcal{P}_0$ through the origin is invariant, and hence, the system has no periodic orbit.
    \item When $\kappa_1<\kappa_2+\kappa_3d$, there is a positive equilibrium in $\mathcal{P}_D$ if and only if $D>0$. In this case, the equilibrium is unique, and it is a saddle. Hence, by the discussion in \Cref{subsec:saddle_no_periodic}, the system has no periodic orbit.
\end{itemize}
\end{proof}
\noindent We remark that the family of networks in \eqref{eq:two_param_families_left} is exactly \cite[Eq. (5)]{farkas:noszticzius:1985}.

\section{The analysis of quadratic \texorpdfstring{$(2,3,2)$}{} systems} \label{sec:2_3_2}

In this section, we are interested in two-species, three-reaction, quadratic, mass-action systems. Our first main result on these systems, \Cref{thm:planar_trimolec}, is that they admit no isolated periodic orbits when target molecularities do not exceed three. We will later, in \Cref{thm:ndim_trimolec}, generalise this result to mass-action systems with arbitrary numbers of species; however, the simpler planar case is of interest in itself, and its proof contributes towards the proof of the more general result.

\begin{theorem} \label{thm:planar_trimolec}
Three-reaction, two-species, trimolecular, quadratic mass-action systems admit no isolated periodic orbit.
\end{theorem}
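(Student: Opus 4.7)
The plan is to proceed by contradiction. Assuming a three-reaction, two-species, trimolecular, quadratic mass-action system admits an isolated periodic orbit, I would first collect the available reductions. By \Cref{subsec:MA} the rank must be two. If one of the two species were trivial, the effective dynamics would be one-dimensional and could not support a periodic orbit, so I may assume no trivial species. By \Cref{lem:sources_on_a_line} the three sources are affinely independent, and by \Cref{cor:divergence_2X_3X} at least one reaction has the form $2\mathsf{X}_j\to 3\mathsf{X}_j$. Up to relabelling species, I take reaction~1 to be $2\mathsf{X}\to 3\mathsf{X}$, with source $s_1=(2,0)$ and reaction vector $r_1=(1,0)$.

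I would then enumerate the possibilities for the other two sources, which are distinct bimolecular complexes different from $2\mathsf{X}$ and forming a triangle with $(2,0)$. Of the ten pairs drawn from $\{\mathsf{0},\mathsf{X},\mathsf{Y},\mathsf{X}+\mathsf{Y},2\mathsf{Y}\}$, affine independence excludes the two collinear pairs $\{\mathsf{0},\mathsf{X}\}$ (on the $x$-axis) and $\{\mathsf{X}+\mathsf{Y},2\mathsf{Y}\}$ (on $x+y=2$), leaving eight source triples. The three triples whose non-$2\mathsf{X}$ members lie on the $y$-axis (namely $\{2\mathsf{X},\mathsf{0},\mathsf{Y}\}$, $\{2\mathsf{X},\mathsf{0},2\mathsf{Y}\}$, $\{2\mathsf{X},\mathsf{Y},2\mathsf{Y}\}$) are immediately dynamically trivial, since then every reaction vector has nonnegative $x$-component with $r_{1,x}=1$, so $\dot x>0$ on $\mathbb{R}^2_+$ and no equilibrium exists.

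For each of the remaining five triples I would list the candidate targets (bounded by trimolecularity) and use the dynamical nontriviality condition $\alpha_1 r_1+\alpha_2 r_2+\alpha_3 r_3=0$ with $\alpha_i>0$, the rank-two condition, and the nontriviality of $\mathsf{Y}$, to pin down a short explicit list of networks. I would then apply the geometric reading of \eqref{eq:detJ_planar} noted after that formula: a non-saddle positive equilibrium requires that the triangle spanned by the reaction vectors and the triangle spanned by the sources have the same orientation. A direct signed-area computation in each subcase should show that in all but two of the admissible networks, the two orientations are opposite, so the equilibrium is a saddle and by \Cref{subsec:saddle_no_periodic} no periodic orbit exists.

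The two exceptional networks are both built on the source triple $\{2\mathsf{X},\mathsf{Y},\mathsf{X}+\mathsf{Y}\}$, with reactions $2\mathsf{X}\to 3\mathsf{X}$, $\mathsf{Y}\to \mathsf{0}$, and $\mathsf{X}+\mathsf{Y}\to k\mathsf{Y}$ for $k\in\{2,3\}$; here the positive equilibrium is an unstable source and the Jacobian argument alone is insufficient. For these I would invoke the Bendixson--Dulac Test with the Dulac function $B(x,y)=1/(xy)$: the reaction-by-reaction contributions to $\mathrm{div}(Bf)$ computed as in the proof of \Cref{lem:divergence} all vanish except that of $2\mathsf{X}\to 3\mathsf{X}$, which contributes the strictly positive function $\kappa_1/y$, so $\mathrm{div}(Bf)>0$ on $\mathbb{R}^2_+$ and no periodic orbit can exist. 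The main obstacle is the case analysis itself: the geometric criterion handles almost everything uniformly, but care is needed to verify that exactly these two networks escape the saddle argument and that the Dulac function $1/(xy)$ certifies the absence of periodic orbits in each.
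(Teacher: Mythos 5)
Your proposal is correct and follows essentially the same route as the paper: reduce via \Cref{cor:divergence_2X_3X} to the case where $2\mathsf{X}\to 3\mathsf{X}$ is present, show that every dynamically nontrivial trimolecular candidate then either has a saddle (via the orientation reading of \eqref{eq:detJ_planar}) or is a generalised LVA network \eqref{eq:LVA_d} with $d\in\{1,2\}$, and dispose of the latter with the Dulac function $1/(xy)$. The only difference is presentational — you enumerate the eight admissible source triples containing $2\mathsf{X}$, whereas the paper's \Cref{lem:saddle_or_LVA} reaches the same saddle-or-LVA dichotomy by directly chasing the signs of $u=c\times d$ — and your claims (exactly two exceptional networks, all on the source triple $\{2\mathsf{X},\mathsf{X}+\mathsf{Y},\mathsf{Y}\}$, with $\mathrm{div}(Bf)=\kappa_1/y>0$) check out.
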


The proof of \Cref{thm:planar_trimolec} is completed in \Cref{subsec:planar_trimolec}. In \Cref{subsec:planar_hopf}, we arrive at our second main result about planar systems, \Cref{thm:ten_src_triples}, where we find \emph{all} three-reaction, planar, quadratic mass-action systems (with arbitrary target molecularities) which admit an Andronov--Hopf bifurcation. It turns out that there are two families of networks that admit supercritical Andronov--Hopf bifurcation, while another family admits a vertical Andronov--Hopf bifurcation. Exactly four of these networks are tetramolecular, while all other networks with an Andronov--Hopf bifurcation have a target complex with a molecularity of at least five.

\subsection{Setting} \label{subsec:planar_setting}

For readability, we use a slightly different notation in the two-species case than in the general case, following \cite[Section 5]{boros:hofbauer:2021a}. Throughout this section, we are analysing the three-reaction mass-action system
\begin{align} \label{eq:three_reactions_network}
\begin{aligned}
\begin{tikzpicture}[scale=0.6]

\node[left]  (P1) at (0, 0) {$a_1 \mathsf{X} + b_1 \mathsf{Y}$};
\node[right] (P2) at (1, 0) {$(a_1+c_1) \mathsf{X} + (b_1+d_1) \mathsf{Y}$};
\node[left]  (P3) at (0,-1) {$a_2 \mathsf{X} + b_2 \mathsf{Y}$};
\node[right] (P4) at (1,-1) {$(a_2+c_2) \mathsf{X} + (b_2+d_2) \mathsf{Y}$};
\node[left]  (P5) at (0,-2) {$a_3 \mathsf{X} + b_3 \mathsf{Y}$};
\node[right] (P6) at (1,-2) {$(a_3+c_3) \mathsf{X} + (b_3+d_3) \mathsf{Y}$};

\draw[->] (P1) to node[above] {\footnotesize $\kappa_1$} (P2);
\draw[->] (P3) to node[above] {\footnotesize $\kappa_2$} (P4);
\draw[->] (P5) to node[above] {\footnotesize $\kappa_3$} (P6);

\end{tikzpicture}
\end{aligned}
\end{align}
and its associated differential equation
\begin{align}
\label{eq:three_reactions_ode}
\begin{split}
\dot{x} &= c_1\kappa_1 x^{a_1}y^{b_1} + c_2\kappa_2 x^{a_2}y^{b_2} + c_3\kappa_3 x^{a_3}y^{b_3}, \\
\dot{y} &= d_1\kappa_1 x^{a_1}y^{b_1} + d_2\kappa_2 x^{a_2}y^{b_2} + d_3\kappa_3 x^{a_3}y^{b_3}
\end{split}
\end{align}
with $a_i$, $b_i$, $a_i+c_i$, $b_i+d_i$ ($i=1,2,3$) being nonnegative integers.

By \Cref{lem:sources_on_a_line}, equation \eqref{eq:three_reactions_ode} can have no periodic orbit if the three sources $(a_1,b_1)$, $(a_2,b_2)$, $(a_3,b_3)$ lie on a line. Hence, from here on, we assume that 
\begin{align}\label{eq:sources_triangle}
\text{$(a_1,b_1)$, $(a_2,b_2)$ and $(a_3,b_3)$ span a triangle, which is positively oriented.}
\end{align}
Clearly, the assumption on the orientation does not restrict generality.

The stoichiometric matrix of the network is 
\[
\Gamma= \begin{bmatrix}
    c_1 & c_2 & c_3 \\ d_1 & d_2 & d_3
\end{bmatrix}\,.
\]
Since we are interested in finding periodic orbits, we assume that $\rank \Gamma = 2$ and the network is dynamically nontrivial. Thus, with $c=[c_1,c_2,c_3]^\top$ and $d=[d_1,d_2,d_3]^\top$, the kernel of $\Gamma$ is spanned by $u=c \times d \in \mathbb{R}^3_+ \cup \mathbb{R}^3_-$. Here,
\begin{align}
\label{eq:u_formula}
\begin{split}
    u_1 &= c_2 d_3 - c_3 d_2, \\
    u_2 &= c_3 d_1 - c_1 d_3, \\
    u_3 &= c_1 d_2 - c_2 d_1.
\end{split}
\end{align}

By \Cref{lem:n_n+1_n_equil}, under the stated assumptions, the mass-action system \eqref{eq:three_reactions_ode} has a unique positive equilibrium $(\bar{x},\bar{y})$. As in \Cref{subsec:detJred}, there exists a nonzero $\mu\in \mathbb{R}$ such that $\mu u_i = \kappa_i \bar{x}^{a_i} \bar{y}^{b_i}$ ($i=1, 2, 3$), and the Jacobian matrix, denoted by $J$, at the equilibrium is given by
\begin{align*}
J = \mu\begin{bmatrix}
    \sum_{i=1}^3a_i c_i u_i & \sum_{i=1}^3b_i c_i u_i \\
    \sum_{i=1}^3a_i d_i u_i & \sum_{i=1}^3b_i d_i u_i
\end{bmatrix}
\begin{bmatrix}
    1/\bar{x} & 0 \\
    0 & 1/\bar{y}
\end{bmatrix}.
\end{align*}
By \Cref{lem:n_n+1_n_equil}, $\det J \neq 0$. In fact, since the source complexes are positively oriented by assumption, formula \eqref{eq:detJ_planar} implies that
\begin{align}\label{eq:sgn_chain}
    \sgn \det J = \sgn \mu = \sgn u_1 = \sgn u_2 = \sgn u_3.
\end{align}
When $\det J <0$, or equivalently $u \in \mathbb{R}^3_-$, the equilibrium is a saddle and, by the discussion in \Cref{subsec:saddle_no_periodic}, the mass-action system \eqref{eq:three_reactions_network} admits no periodic orbits.

For later use, the trace of the Jacobian matrix at the positive equilibrium is given by
\begin{align} \label{eq:trJ}
\tr J = \mu\left(\frac{1}{\bar{x}}\sum_{i=1}^3a_ic_iu_i + \frac{1}{\bar{y}}\sum_{i=1}^3b_id_iu_i\right).
\end{align}

\subsection{Lotka--Volterra--Autocatalator}\label{subsec:LVA}

In light of \Cref{cor:divergence_2X_3X}, a trimolecular network satisfying the assumptions of \Cref{thm:planar_trimolec} with no reaction $2\mathsf{X}_j \longrightarrow 3\mathsf{X}_j$ cannot have an isolated periodic orbit. This motivates the investigation of networks with $2\mathsf{X}_j \longrightarrow 3\mathsf{X}_j$.

Beginning with the Lotka reactions \eqref{eq:lotka_only} and replacing the linear autocatalytic step $\mathsf{X} \longrightarrow 2\mathsf{X}$ by the quadratic autocatalytic step $2\mathsf{X} \longrightarrow 3\mathsf{X}$, we obtain the system
\begin{align}\label{eq:LVA_basic}
\begin{aligned}
    \begin{tikzpicture}[scale=0.6]

\node[left]  (P1) at (0, 0) {$2\mathsf{X}$};
\node[right] (P2) at (1, 0) {$3\mathsf{X}$};
\node[left]  (P3) at (0,-1) {$\mathsf{X+Y}$};
\node[right] (P4) at (1,-1) {$2\mathsf{Y}$};
\node[left]  (P5) at (0,-2) {$\mathsf{Y}$};
\node[right] (P6) at (1,-2) {$\mathsf{0}$};

\draw[->] (P1) to node[above] {\footnotesize $\kappa_1$} (P2);
\draw[->] (P3) to node[above] {\footnotesize $\kappa_2$} (P4);
\draw[->] (P5) to node[above] {\footnotesize $\kappa_3$} (P6);

\node at (8,-1)
{$\begin{aligned}
\dot{x} &= \kappa_1 x^2 - \kappa_2 xy, \\
\dot{y} &= \kappa_2 xy - \kappa_3 y, \\
\end{aligned}$};

\end{tikzpicture}
\end{aligned}
\end{align}
which is called the Lotka--Volterra--Autocatalator (LVA) \cite[Eq.\ (8)]{farkas:noszticzius:1985}, \cite[Eq.\ (1)]{simon:1992}. As shown in \cite{farkas:noszticzius:1985} using a Lyapunov function, the positive equilibrium is globally repelling, and the system can have no periodic orbit. The LVA belongs to a family of networks we refer to as the \lq\lq generalised LVA\rq\rq, namely,
\begin{align}\label{eq:LVA_d}
\begin{aligned}
\begin{tikzpicture}[scale=0.6]

\node[left]  (P1) at (0, 0) {$2\mathsf{X}$};
\node[right] (P2) at (1, 0) {$3\mathsf{X}$};
\node[left]  (P3) at (0,-1) {$\mathsf{X+Y}$};
\node[right] (P4) at (1,-1) {$(d+1)\mathsf{Y}$};
\node[left]  (P5) at (0,-2) {$\mathsf{Y}$};
\node[right] (P6) at (1,-2) {$\mathsf{0}$};

\draw[->] (P1) to node[above] {\footnotesize $\kappa_1$} (P2);
\draw[->] (P3) to node[above] {\footnotesize $\kappa_2$} (P4);
\draw[->] (P5) to node[above] {\footnotesize $\kappa_3$} (P6);

\node at (8,-1)
{$\begin{aligned}
\dot{x} &= \kappa_1 x^2 - \kappa_2 xy, \\
\dot{y} &= d\kappa_2 xy - \kappa_3 y, \\
\end{aligned}$};

\end{tikzpicture}
\end{aligned}
\end{align}
where $d\geq1$. From \eqref{eq:sgn_chain} and \eqref{eq:trJ}, we find that $\det J>0$ and $\tr J>0$ at the unique positive equilibrium of any system in the generalised LVA family, and thus it is a repellor. In fact, by the use of the Dulac function $\frac{1}{xy}$ (as in the proof of \Cref{lem:divergence}), one sees that networks in \eqref{eq:LVA_d} cannot have a periodic orbit.

What makes the generalised LVA unique becomes apparent in \Cref{lem:saddle_or_LVA} below.

\subsection{Proof of \texorpdfstring{\Cref{thm:planar_trimolec}}{}}
\label{subsec:planar_trimolec}

To prove \Cref{thm:planar_trimolec}, consider a two-species, three-reaction, trimolecular, quadratic network. Assume also that the network has rank two, is dynamically nontrivial, and the sources span a triangle (as discussed in \Cref{subsec:planar_setting}, these are all necessary for the associated mass-action differential equation to have a periodic orbit). We now distinguish between two cases. On the one hand, if neither $2\mathsf{X} \longrightarrow 3\mathsf{X}$ nor $2\mathsf{Y} \longrightarrow 3\mathsf{Y}$ is present then, noting that a rank-two network on two species clearly has no trivial species, the system admits no isolated periodic orbit by \Cref{cor:divergence_2X_3X}. On the other hand, if one of these reactions is present then, by \Cref{lem:saddle_or_LVA} below, the system admits no periodic orbit. Hence, the proof of \Cref{thm:planar_trimolec} will be complete after we prove \Cref{lem:saddle_or_LVA}.

\begin{lemma}\label{lem:saddle_or_LVA}
Suppose a $(2,3,2)$ network \eqref{eq:three_reactions_network} includes the reaction $2\mathsf{X} \longrightarrow 3  \mathsf{X}$ and satisfies the conditions above (i.e., it is quadratic, trimolecular, dynamically nontrivial, and has sources spanning a positively oriented triangle). Let $J$ be the Jacobian matrix of the associated mass-action system at the unique positive equilibrium. Then, either
\begin{enumerate}[label = {(\alph*)}]
    \item $\det J<0$ (the equilibrium is a saddle); or
    \item $\det J>0$, and the network belongs to the generalised LVA family \eqref{eq:LVA_d}.
\end{enumerate}
Consequently, the system admits no periodic orbit.
\end{lemma}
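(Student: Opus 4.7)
I would begin by relabeling reactions so that reaction $1$ is $2\mathsf{X}\to 3\mathsf{X}$, giving $(a_1, b_1) = (2, 0)$ and $(c_1, d_1) = (1, 0)$. Because the sources span a triangle by hypothesis, they are affinely independent, so \Cref{lem:n_n+1_n_equil} already guarantees $\det J \ne 0$; hence either $\det J < 0$ (so we are in case (a)) or $\det J > 0$, in which case I must show the network belongs to the generalised LVA family \eqref{eq:LVA_d}. Assume $\det J > 0$. Then \eqref{eq:sgn_chain} gives $u_1, u_2, u_3 > 0$, and plugging $(c_1, d_1) = (1, 0)$ into \eqref{eq:u_formula} yields the concise identities $u_2 = -d_3$ and $u_3 = d_2$. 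Hence $d_2 \geq 1$ and $d_3 \leq -1$, and the target-nonnegativity condition $b_3 + d_3 \geq 0$ then forces $b_3 \geq 1$, so source $3$ lies in $\{(0, 1), (1, 1), (0, 2)\}$.

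The next step is a short combinatorial enumeration of admissible (source $2$, source $3$) pairs. These sources must be distinct bimolecular complexes other than $(2, 0)$, they must avoid the collinear triples $\{(2,0),(1,0),(0,0)\}$ and $\{(2,0),(1,1),(0,2)\}$, and the triangle $(2, 0), (a_2, b_2), (a_3, b_3)$ must be positively oriented. A direct check across the finite list of options shows that exactly two configurations survive: \emph{Case I}, with source $2 = \mathsf{X} + \mathsf{Y}$ and source $3 = \mathsf{Y}$; and \emph{Case II}, with source $2 = 2\mathsf{Y}$ and source $3 = \mathsf{Y}$.

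In Case I the trimolecular bound together with $d_3 \leq -1$ and $b_3 = 1$ forces $d_3 = -1$ and $c_3 \in \{0, 1, 2, 3\}$, while on reaction $2$ the trimolecular bound with $d_2 \geq 1$ restricts $(c_2, d_2)$ to $\{(-1, 1), (0, 1), (-1, 2)\}$. The remaining positivity $u_1 = -c_2 - c_3 d_2 > 0$ then forces $c_2 = -1$ (since $c_3 d_2 \geq 0$) and subsequently $c_3 = 0$ (since then $c_3 d_2 < 1$). This produces exactly the networks in \eqref{eq:LVA_d} with $d \in \{1, 2\}$. In Case II the constraints $c_2 \geq 0$, $d_2 \geq 1$, $c_2 + d_2 \leq 1$ pin down $(c_2, d_2) = (0, 1)$, after which $u_1 = -c_3 \leq 0$ contradicts $u_1 > 0$ and the case is empty. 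The concluding "consequently" clause then follows because saddles preclude periodic orbits (\Cref{subsec:saddle_no_periodic}), and the Dulac function $(xy)^{-1}$ does so for the generalised LVA, as already recorded in \Cref{subsec:LVA}.

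The crucial observation to get right early is that, via \eqref{eq:sgn_chain} and the cross-product formulas for the $u_i$, the hypothesis $\det J > 0$ is equivalent to the sharp sign conditions $d_2 > 0$ and $d_3 < 0$. After that, the trimolecular bound on targets, the orientation constraint on sources, and the positivity $u_1 > 0$ are rigid enough to collapse the remaining freedom onto the claimed family. I expect no genuine obstacle beyond being careful to enumerate all positively oriented source triples containing $(2, 0)$ with one vertex in $\{(0,1),(1,1),(0,2)\}$, and remembering to exclude the collinearity $\{(2,0),(1,1),(0,2)\}$.
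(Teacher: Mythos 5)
Your proof is correct and follows essentially the same route as the paper's: both reduce $\det J>0$ to the sign conditions $u_1,u_2,u_3>0$ via $u=c\times d$, read off $d_2>0$, $d_3<0$ from $u_3=d_2$ and $u_2=-d_3$, and then use the positive orientation of the sources together with $u_1=c_2d_3-c_3d_2>0$ to collapse onto the generalised LVA family. The only cosmetic difference is that you enumerate the admissible source triples explicitly (surfacing and then killing the $2\mathsf{Y}$ configuration), whereas the paper first pins down the third source to $\{(0,0),(1,0),(0,1)\}$ from the orientation and lets $c_2<0$ rule out everything but $\mathsf{X}+\mathsf{Y}$ for the second source.
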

\begin{proof} The assumptions of the lemma imply that $c_1=1$ and $d_1=0$ and so the differential equation \eqref{eq:three_reactions_ode} reads 
\begin{align*}
\begin{split}
\dot{x} &= \kappa_1 x^2 + c_2\kappa_2 x^{a_2}y^{b_2} + c_3\kappa_3 x^{a_3}y^{b_3}, \\
\dot{y} &= \phantom{\kappa_1 x^2 +} d_2\kappa_2 x^{a_2}y^{b_2} + d_3\kappa_3 x^{a_3}y^{b_3}.
\end{split}
\end{align*}
If $\det J < 0$, then the system admits no periodic orbit (see \Cref{subsec:saddle_no_periodic}). So now suppose $\det J > 0$. Then $u\in\mathbb{R}^3_+$, see \eqref{eq:sgn_chain}. Since $c_1 = 1$ and $d_1 = 0$, we get, from \eqref{eq:u_formula}, that $u_2 = -d_3 > 0$ and $u_3 =  d_2 > 0$. By \eqref{eq:sources_triangle}, the source $(a_3,b_3)$ is one of $(0,0)$, $(1,0)$ or $(0,1)$, but only in the latter case could $d_3$ be negative. Hence, $(a_3,b_3)=(0,1)$ and $d_3=-1$. Further, $c_3 \geq 0$ also follows. Then $u_1 = c_2d_3 - c_3d_2 > 0$ implies $c_2 < 0$, hence $(a_2, b_2) = (1,1)$ and $c_2 = -1$ (where we once more used \eqref{eq:sources_triangle}). Then $u_1 = 1 - c_3d_2 > 0$ and hence $c_3 = 0$. Consequently, the network belongs to the generalised LVA family \eqref{eq:LVA_d} and, by the discussion in \Cref{subsec:LVA}, the system admits no periodic orbit. 
\end{proof}

\subsection{Discussion on \texorpdfstring{\Cref{thm:planar_trimolec}}{}} \label{subsec:counterex}

By \Cref{thm:ndim_trimolec} below, the conclusion of \Cref{thm:planar_trimolec} holds true for systems with any number of species. However, as we now illustrate by examples, the restrictions on the number of reactions and on the (source and target) molecularities cannot be dropped. Each of the following three planar networks
\begin{align*}
    \begin{tikzpicture}[scale=0.6]

\node[left]  (P1) at (0, 0) {$2\mathsf{X}$};
\node[right] (P2) at (1, 0) {$3\mathsf{X}$};
\node[left]  (P3) at (0,-1) {$\mathsf{X+Y}$};
\node[right] (P4) at (1,-1) {$2\mathsf{Y}$};
\node[left]  (P5) at (0,-2) {$\mathsf{Y}$};
\node[right] (P6) at (1,-2) {$\mathsf{0}$};
\node[left]  (P7) at (0,-3) {$\mathsf{Y}$};
\node[right] (P8) at (1,-3) {$\mathsf{X}$};

\draw[->] (P1) to node[above] {\footnotesize $\kappa_1$} (P2);
\draw[->] (P3) to node[above] {\footnotesize $\kappa_2$} (P4);
\draw[->] (P5) to node[above] {\footnotesize $\kappa_3$} (P6);
\draw[->] (P7) to node[above] {\footnotesize $\kappa_4$} (P8);

\begin{scope}[shift={(7,-0.5)}]
\node[left]  (P1) at (0, 0) {$\mathsf{0}$};
\node[right] (P2) at (1, 0) {$\mathsf{X}$};
\node[left]  (P3) at (0,-1) {$\mathsf{X}+2\mathsf{Y}$};
\node[right] (P4) at (1,-1) {$3\mathsf{Y}$};
\node[left]  (P5) at (0,-2) {$\mathsf{Y}$};
\node[right] (P6) at (1,-2) {$\mathsf{0}$};

\draw[->] (P1) to node[above] {\footnotesize $\kappa_1$} (P2);
\draw[->] (P3) to node[above] {\footnotesize $\kappa_2$} (P4);
\draw[->] (P5) to node[above] {\footnotesize $\kappa_3$} (P6);
\end{scope}

\begin{scope}[shift={(14,-0.5)}]
\node[left]  (P1) at (0, 0) {$2\mathsf{X}$};
\node[right] (P2) at (1, 0) {$3\mathsf{X}+\mathsf{Y}$};
\node[left]  (P3) at (0,-1) {$\mathsf{X+Y}$};
\node[right] (P4) at (1,-1) {$\mathsf{Y}$};
\node[left]  (P5) at (0,-2) {$\mathsf{Y}$};
\node[right] (P6) at (1,-2) {$\mathsf{0}$};

\draw[->] (P1) to node[above] {\footnotesize $\kappa_1$} (P2);
\draw[->] (P3) to node[above] {\footnotesize $\kappa_2$} (P4);
\draw[->] (P5) to node[above] {\footnotesize $\kappa_3$} (P6);

\end{scope}

\end{tikzpicture}
\end{align*}
admits a supercritical Andronov--Hopf bifurcation, and thus a stable limit cycle. On the left is a four-reaction, quadratic, trimolecular, mass-action system obtained by adding a reaction to the LVA \eqref{eq:LVA_basic}; in the middle is a three-reaction, cubic, mass-action system known as the Selkov oscillator \cite{selkov:1968}; and on the right is a three-reaction, quadratic, tetramolecular, mass-action system which appeared as \eqref{eq:tetra_simplest}, and is the simplest member of the family \eqref{eq:tetra} below. The bifurcations in these networks occur at $\kappa_3 = \kappa_4$, $\kappa_2=\frac{\kappa_3^3}{\kappa_1^2}$, and $\kappa_1 = \kappa_2$, respectively. The analysis of all three systems is performed in \cite{boros:2022}.




\subsection{Andronov--Hopf bifurcations}\label{subsec:planar_hopf}

This subsection is devoted to finding all three-reaction, planar, quadratic, mass-action systems that admit an \emph{Andronov--Hopf bifurcation}, where a pair of complex conjugate eigenvalues crosses the imaginary axis as a parameter varies, see e.g.\ \cite[Section 7.2]{farkas:1994} or \cite[Section 3.4]{kuznetsov:2004}. In light of \Cref{thm:planar_trimolec}, networks allowing such a bifurcation necessarily have a target complex with molecularity at least four. If the bifurcation is supercritical, a stable limit cycle is born. In degenerate cases (e.g.\ in linear systems \cite[Fig. 3.9]{kuznetsov:2004}), there is a one-parameter family of periodic solutions (a center) at the critical bifurcation parameter, an event we refer to as a \emph{vertical} Andronov--Hopf bifurcation.

We proceed first by enumerating all possible configurations of (at most) bimolecular source complexes on two species. For each configuration we then identify which networks satisfy the necessary conditions for oscillation described above, including that they must be dynamically nontrivial, and conditions on the determinant and trace of the Jacobian matrix at positive equilibria. Where the set of networks with given source complexes and meeting these necessary conditions is nonempty, we identify those which admit an Andronov--Hopf bifurcation, and confirm whether the bifurcation is nondegenerate.

Up to exchange of $\mathsf{X}$ and $\mathsf{Y}$, there are ten ways to choose three bimolecular source complexes that do not lie on a line. These are illustrated in the following diagram:
\begin{align*}
\begin{tikzpicture}

\draw [step=1, gray, very thin] (0,0) grid (2.5,2.5);
\draw [ -, black] (0,0)--(2.5,0);
\draw [ -, black] (0,0)--(0,2.5);
\node at (1.5,2.5) {\boxed{1}};
\node (P1) at (1,0) {\large \textcolor{blue}{$\bullet$}};
\node (P2) at (0,1) {\large \textcolor{blue}{$\bullet$}};
\node (P3) at (0,0) {\large \textcolor{blue}{$\bullet$}};
\node [below]      at (P1) {$\mathsf{X}$};
\node [left]       at (P2) {$\mathsf{Y}$};
\node [below left] at (P3) {$\mathsf{0}$};

\begin{scope}[shift={(3.5,0)}]
\draw [step=1, gray, very thin] (0,0) grid (2.5,2.5);
\draw [ -, black] (0,0)--(2.5,0);
\draw [ -, black] (0,0)--(0,2.5);
\node at (1.5,2.5) {\boxed{2}};
\node (P1) at (1,0) {\large \textcolor{blue}{$\bullet$}};
\node (P2) at (1,1) {\large \textcolor{blue}{$\bullet$}};
\node (P3) at (0,0) {\large \textcolor{blue}{$\bullet$}};
\node [below]       at (P1) {$\mathsf{X}$};
\node [above right] at (P2) {$\mathsf{X+Y}$};
\node [below left]  at (P3) {$\mathsf{0}$};
\end{scope}

\begin{scope}[shift={(7,0)}]
\draw [step=1, gray, very thin] (0,0) grid (2.5,2.5);
\draw [ -, black] (0,0)--(2.5,0);
\draw [ -, black] (0,0)--(0,2.5);
\node at (1.5,2.5) {\boxed{3}};
\node (P1) at (2,0) {\large \textcolor{blue}{$\bullet$}};
\node (P2) at (0,1) {\large \textcolor{blue}{$\bullet$}};
\node (P3) at (0,0) {\large \textcolor{blue}{$\bullet$}};
\node [below]      at (P1) {$\mathsf{2X}$};
\node [left]       at (P2) {$\mathsf{Y}$};
\node [below left] at (P3) {$\mathsf{0}$};
\end{scope}

\begin{scope}[shift={(10.5,0)}]
\draw [step=1, gray, very thin] (0,0) grid (2.5,2.5);
\draw [ -, black] (0,0)--(2.5,0);
\draw [ -, black] (0,0)--(0,2.5);
\node at (1.5,2.5) {\boxed{4}};
\node (P1) at (2,0) {\large \textcolor{blue}{$\bullet$}};
\node (P2) at (0,1) {\large \textcolor{blue}{$\bullet$}};
\node (P3) at (1,0) {\large \textcolor{blue}{$\bullet$}};
\node [below] at (P1) {$\mathsf{2X}$};
\node [left]  at (P2) {$\mathsf{Y}$};
\node [below] at (P3) {$\mathsf{X}$};
\end{scope}

\begin{scope}[shift={(14,0)}]
\draw [step=1, gray, very thin] (0,0) grid (2.5,2.5);
\draw [ -, black] (0,0)--(2.5,0);
\draw [ -, black] (0,0)--(0,2.5);
\node at (1.5,2.5) {\boxed{5}};
\node (P1) at (2,0) {\large \textcolor{blue}{$\bullet$}};
\node (P2) at (0,2) {\large \textcolor{blue}{$\bullet$}};
\node (P3) at (0,0) {\large \textcolor{blue}{$\bullet$}};
\node [below]      at (P1) {$\mathsf{2X}$};
\node [left]       at (P2) {$\mathsf{2Y}$};
\node [below left] at (P3) {$\mathsf{0}$};
\end{scope}

\begin{scope}[shift={(0,-3.5)}]
\draw [step=1, gray, very thin] (0,0) grid (2.5,2.5);
\draw [ -, black] (0,0)--(2.5,0);
\draw [ -, black] (0,0)--(0,2.5);
\node at (1.5,2.5) {\boxed{6}};
\node (P1) at (2,0) {\large \textcolor{blue}{$\bullet$}};
\node (P2) at (0,2) {\large \textcolor{blue}{$\bullet$}};
\node (P3) at (1,0) {\large \textcolor{blue}{$\bullet$}};
\node [below] at (P1) {$\mathsf{2X}$};
\node [left]  at (P2) {$\mathsf{2Y}$};
\node [below] at (P3) {$\mathsf{X}$};
\end{scope}

\begin{scope}[shift={(3.5,-3.5)}]
\draw [step=1, gray, very thin] (0,0) grid (2.5,2.5);
\draw [ -, black] (0,0)--(2.5,0);
\draw [ -, black] (0,0)--(0,2.5);
\node at (1.5,2.5) {\boxed{7}};
\node (P1) at (1,0) {\large \textcolor{blue}{$\bullet$}};
\node (P2) at (1,1) {\large \textcolor{blue}{$\bullet$}};
\node (P3) at (0,1) {\large \textcolor{blue}{$\bullet$}};
\node [below]       at (P1) {$\mathsf{X}$};
\node [above right] at (P2) {$\mathsf{X+Y}$};
\node [left]        at (P3) {$\mathsf{Y}$};
\end{scope}

\begin{scope}[shift={(7,-3.5)}]
\draw [step=1, gray, very thin] (0,0) grid (2.5,2.5);
\draw [ -, black] (0,0)--(2.5,0);
\draw [ -, black] (0,0)--(0,2.5);
\node at (1.5,2.5) {\boxed{8}};
\node (P1) at (2,0) {\large \textcolor{blue}{$\bullet$}};
\node (P2) at (1,1) {\large \textcolor{blue}{$\bullet$}};
\node (P3) at (1,0) {\large \textcolor{blue}{$\bullet$}};
\node [below]       at (P1) {$\mathsf{2X}$};
\node [above right] at (P2) {$\mathsf{X+Y}$};
\node [below]       at (P3) {$\mathsf{X}$};
\end{scope}

\begin{scope}[shift={(10.5,-3.5)}]
\draw [step=1, gray, very thin] (0,0) grid (2.5,2.5);
\draw [ -, black] (0,0)--(2.5,0);
\draw [ -, black] (0,0)--(0,2.5);
\node at (1.5,2.5) {\boxed{9}};
\node (P1) at (2,0) {\large \textcolor{blue}{$\bullet$}};
\node (P2) at (1,1) {\large \textcolor{blue}{$\bullet$}};
\node (P3) at (0,1) {\large \textcolor{blue}{$\bullet$}};
\node [below]       at (P1) {$\mathsf{2X}$};
\node [above right] at (P2) {$\mathsf{X+Y}$};
\node [left]        at (P3) {$\mathsf{Y}$};
\end{scope}

\begin{scope}[shift={(14,-3.5)}]
\draw [step=1, gray, very thin] (0,0) grid (2.5,2.5);
\draw [ -, black] (0,0)--(2.5,0);
\draw [ -, black] (0,0)--(0,2.5);
\node at (1.5,2.5) {\boxed{10}};
\node (P1) at (2,0) {\large \textcolor{blue}{$\bullet$}};
\node (P2) at (1,1) {\large \textcolor{blue}{$\bullet$}};
\node (P3) at (0,0) {\large \textcolor{blue}{$\bullet$}};
\node [below]       at (P1) {$\mathsf{2X}$};
\node [above right] at (P2) {$\mathsf{X+Y}$};
\node [below left]  at (P3) {$\mathsf{0}$};
\end{scope}

\end{tikzpicture}
\end{align*}

 \Cref{thm:ten_src_triples} below analyses these ten possibilities, and can be regarded as the second of our main results on three-reaction, planar, quadratic networks. We find that in Cases \boxed{1} to \boxed{6} the mass-action systems admit no periodic orbits. In Cases \boxed{7} to \boxed{10}, we find systems with periodic orbits, but only in Cases \boxed{9} and \boxed{10} are these periodic orbits isolated. Notice that in the statement of the theorem in Cases \boxed{7} to \boxed{10} we list the source complexes such that they span a positively oriented triangle.

\begin{theorem}\label{thm:ten_src_triples}
For a three-reaction, planar, quadratic, mass-action system, the following hold.
\begin{itemize}

\item \textbf{Cases \boxed{1} to \boxed{6}.} The system admits no periodic orbit.

\item \textbf{Case \boxed{7} $\mathsf{X}$, $\mathsf{X+Y}$, $\mathsf{Y}$.} The system has a positive equilibrium which is a center for all $\kappa$ if
\begin{align*}
c_3=0, \ d_1=0, \ \sgn c_1 = -\sgn c_2 = -\sgn d_1 = \sgn d_2 \neq0.
\end{align*}
Otherwise, the system admits no periodic orbit.

\item \textbf{Case \boxed{8} $\mathsf{2X}$, $\mathsf{X+Y}$, $\mathsf{X}$.}
The system has a periodic orbit if and only if
\begin{align*}
c_1>0, \ c_2 = -1, \  c_3>0, \ \ d_1>0, \ d_2=-1, \ d_3\geq0, \ \ \frac{d_3}{c_3} < 1 < \frac{d_1}{c_1},\text{ and }\kappa_1c_1+\kappa_2d_2 = 0.
\end{align*}
In this case, the system has a positive equilibrium which is a center, and by varying the ratio $\frac{\kappa_1}{\kappa_2}$, one obtains a vertical Andronov--Hopf bifurcation.

\item \textbf{Case \boxed{9} $\mathsf{2X}$, $\mathsf{X+Y}$, $\mathsf{Y}$.} The system admits an Andronov--Hopf bifurcation if and only if 
\begin{align*}
&c_1>0, \ c_2 = -1,  \ c_3>  0, \ \ d_1>0, \ d_2\geq -1, \ d_3\geq-1, \ \ \frac{1}{2} \left(\frac{d_3}{c_3}+\frac{d_1}{c_1}\right) < \frac{d_2}{c_2} < \frac{d_1}{c_1} \text{ or}\\
&c_1>0, \ c_2 = -1,  \ c_3=0, \ \ d_1>0, \ d_2\geq -1, \ d_3=-1, \ \  \frac{d_2}{c_2} < \frac{d_1}{c_1}.
\end{align*}
Furthermore, the Andronov--Hopf bifurcation is supercritical.

\item \textbf{Case \boxed{10} $\mathsf{2X}$, $\mathsf{X+Y}$, $\mathsf{0}$.} The system admits an Andronov--Hopf bifurcation if and only if 
\begin{align*}
c_1>0, \ c_2 = -1, \ c_3> 0, \ \ d_1>0, \ d_2= -1, \ d_3\geq0, \ \ \frac{1}{2} \left(\frac{d_3}{c_3}+\frac{d_1}{c_1}\right) < \frac{d_2}{c_2} < \frac{d_1}{c_1}.
\end{align*}
Furthermore, the Andronov--Hopf bifurcation is supercritical.
\end{itemize}
\end{theorem}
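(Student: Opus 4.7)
The plan is to proceed case-by-case through the ten source-complex configurations. In each case the sources $(a_1,b_1),(a_2,b_2),(a_3,b_3)$ are fixed, so the reaction vectors $(c_i,d_i)$ range over all integer vectors making $(a_i+c_i,b_i+d_i)$ nonnegative. The main tools will be the formulas \eqref{eq:u_formula} for the generator $u=c\times d$ of $\ker\Gamma$, the determinant and trace formulas for the Jacobian at the positive equilibrium (equations \eqref{eq:detJ_planar} and \eqref{eq:trJ}), together with the sign chain \eqref{eq:sgn_chain} which links positive orientation of the sources to the sign of $u$ and of $\det J$. A necessary condition for any Andronov--Hopf bifurcation is $\det J>0$ with $\tr J$ able to vanish for some choice of $\kappa$, so the search reduces to enumerating target complexes satisfying these two scalar conditions, and then verifying transversality and nondegeneracy.

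For Cases \boxed{1}--\boxed{6} the goal is to rule out periodic orbits. In each such configuration one shows that either (i) the network fails dynamic nontriviality (no valid $u\in\mathbb{R}^3_+$), or (ii) the sign pattern of feasible $(c,d)$ forces $u\in\mathbb{R}^3_-$, making the equilibrium a saddle and invoking \Cref{subsec:saddle_no_periodic}, or (iii) a Dulac-type divergence argument in the spirit of \Cref{lem:divergence} applies. The configurations containing a $\mathsf{0}$ source (Cases \boxed{1}, \boxed{2}, \boxed{3}, \boxed{5}) are particularly tractable because a reaction out of $\mathsf{0}$ already pins the signs of the corresponding $(c,d)$ column, and a short enumeration suffices; in the remaining cases \boxed{4} and \boxed{6}, where $\mathsf{X}$ appears as a source, one additionally uses that any reaction whose source is $\mathsf{X}$ contributes nothing harmful to the Dulac divergence, leaving at most the Lotka--Volterra type configurations already handled by \Cref{lem:divergence}.

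For Cases \boxed{7}--\boxed{10}, the approach is to impose $\det J>0$ via $u\in\mathbb{R}^3_+$ (reading off explicit sign conditions on the $c_i$ and $d_i$ from \eqref{eq:u_formula}) and then analyse when $\tr J$ can vanish. In Case \boxed{7} the monomials $x^{a_i}y^{b_i}$ are $x$, $xy$, $y$, so $\tr J$ simplifies, and the center is produced by the classical Lotka--Volterra conserved quantity (compare the proof of \Cref{lem:two_param_families}). In Case \boxed{8}, since the sources $\mathsf{2X}$, $\mathsf{X+Y}$, $\mathsf{X}$ are collinear in the $x$-direction, $\tr J=0$ becomes a linear condition $\kappa_1c_1+\kappa_2d_2=0$ on rate constants; verifying that this produces an entire family of periodic orbits (a vertical Hopf) amounts to exhibiting an explicit first integral on $\mathbb{R}^2_+$, which can be found by inspection after the substitution suggested by the sources lying on a line. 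For Cases \boxed{9} and \boxed{10} one identifies the admissible sign patterns for $(c,d)$, writes $\tr J$ as a linear form in $1/\bar x$ and $1/\bar y$ with coefficients depending only on the stoichiometry, and checks when this form can vanish at a positive equilibrium as a rate constant is varied, yielding the inequalities displayed in the theorem.

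The main obstacle will be the last step: verifying that in Cases \boxed{9} and \boxed{10} the Andronov--Hopf bifurcation is nondegenerate (transversality) and supercritical (negative first Lyapunov coefficient). The transversality check reduces to showing $\partial_\kappa \tr J\neq 0$ along the critical locus, which is straightforward from the explicit $\tr J$ formula. Computing the first Lyapunov coefficient, however, requires translating the equilibrium to the origin, diagonalising the linear part into a rotation, and evaluating a standard polynomial expression in the second and third derivatives of the vector field; this calculation is algebraically heavy but mechanical, and the planner will carry it out for the parametric family identified, showing that the resulting coefficient has a definite sign throughout the admissible parameter region.
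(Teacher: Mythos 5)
Your overall route coincides with the paper's: enumerate the ten source configurations, use $u=c\times d$ together with the sign chain \eqref{eq:sgn_chain} to pin down admissible targets, reduce Case $\boxed{8}$ to a linear system, and settle supercriticality in Cases $\boxed{9}$ and $\boxed{10}$ by computing the first focal value. However, there are concrete gaps. First, in Cases $\boxed{1}$--$\boxed{6}$ and in Case $\boxed{7}$ the theorem asserts the absence of \emph{all} periodic orbits, not just isolated ones, so \Cref{lem:divergence} alone does not finish the job: when the Dulac divergence vanishes identically you land in a Lotka--Volterra system without diagonal terms, which in general \emph{does} admit (non-isolated) periodic orbits. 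The missing step is the observation that in such a system only reactions with source $\mathsf{Y}$ or $\mathsf{X}+\mathsf{Y}$ can contribute to $\dot y$, and each of Cases $\boxed{1}$--$\boxed{6}$ contains at most one such source, so $\dot y$ has a fixed sign on $\mathbb{R}^2_+$, which is incompatible with a periodic orbit (equivalently, one may invoke \Cref{lem:two_param_families}, whose source sets match none of these configurations). Likewise in Case $\boxed{7}$ your trace analysis only produces the center; the claim that \emph{otherwise} there is no periodic orbit requires the Bendixson--Dulac argument with Dulac function $(xy)^{-1}$, whose divergence is $-c_3\kappa_3x^{-2}-d_1\kappa_1y^{-2}$ and is strictly negative unless $c_3=d_1=0$. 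A trace condition at the equilibrium cannot by itself exclude limit cycles surrounding a focus with nonzero trace.

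Second, in Case $\boxed{8}$ your justification is garbled: the sources $2\mathsf{X}$, $\mathsf{X}+\mathsf{Y}$, $\mathsf{X}$ are \emph{not} collinear (if they were, \Cref{lem:sources_on_a_line} would already forbid periodic orbits, contradicting the statement). What is true, and what the argument actually needs, is that every source contains at least one $\mathsf{X}$, so dividing the vector field by $x$ yields an affine system on $\mathbb{R}^2_+$; for such a system a periodic orbit exists if and only if the equilibrium is a linear center, i.e.\ $\det J>0$ and $\tr J=c_1\kappa_1+d_2\kappa_2=0$. This gives both the ``only if'' direction and the vertical Andronov--Hopf bifurcation at once, with no need to hunt for a first integral. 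With these repairs your plan matches the published proof; the remaining burden, as you acknowledge, is the first-Lyapunov-coefficient computation for Cases $\boxed{9}$ and $\boxed{10}$, which the paper delegates to a symbolic computation and which you would need to carry out over the entire admissible parameter region, verifying that the coefficient is negative (in particular nonzero) throughout and not merely at sample points.
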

\begin{proof}
We note first that whenever the networks in cases \boxed{1} to \boxed{10} are dynamically nontrivial, they are nondegenerate and, by \Cref{lem:n_n+1_n_equil}, each corresponding mass-action system has a unique positive equilibrium for any choice of rate constants. On the other hand, if a network is dynamically trivial, then the corresponding mass-action system can have no periodic orbits.
\begin{itemize}
\item \textbf{Cases \boxed{1} to \boxed{6}.}
Assume, by way of contradiction, that the system admits a periodic orbit. First, we show that no reaction of the form \eqref{eq:rxn_pos_div} is present. Then we argue that the differential equation is not of the form \eqref{eq:divergence_odes}, and thus arrive at a contradiction.

In cases \boxed{1} and \boxed{2}, the network obviously cannot have a reaction of the form \eqref{eq:rxn_pos_div}.

In cases \boxed{3} to \boxed{6}, let the first source be $2\mathsf{X}$ and the second source be $\mathsf{Y}$ (cases \boxed{3} and \boxed{4}) or $2\mathsf{Y}$ (cases \boxed{5} and \boxed{6}). In all cases, $a_2=b_1=b_3=0$, and hence, $c_2, d_1, d_3 \geq 0$. Since there is a periodic orbit by assumption, the network must be dynamically nontrivial, and $d_2<0$ follows. Additionally, the Jacobian determinant evaluated at the unique positive equilibrium must be positive, and therefore $u\in\mathbb{R}^3_+$. Hence, $0<u_3=c_1 d_2 - c_2 d_1$ (see \eqref{eq:u_formula}), which implies $c_1<0$. Since $c_1$ and $d_2$ are both negative, indeed there is no reaction of the form \eqref{eq:rxn_pos_div}.

Then, by \Cref{lem:divergence}, the associated mass-action system is of the Lotka--Volterra type with no diagonal term. In particular, only reactions with a source $\mathsf{Y}$ or $\mathsf{X+Y}$ can contribute to $\dot{y}$. However, since in all six cases there is at most one such source, $\dot{y} \geq 0$ or $\dot{y} \leq 0$. This contradicts the existence of a periodic orbit.

\item \textbf{Case \boxed{7} $\mathsf{X}$, $\mathsf{X+Y}$, $\mathsf{Y}$.}
The differential equation reads
\begin{align*}
\begin{split}
    \dot{x} &= c_1\kappa_1 x + c_2\kappa_2 xy + c_3\kappa_3 y, \\
    \dot{y} &= d_1\kappa_1 x + d_2\kappa_2 xy + d_3\kappa_3 y
\end{split}
\end{align*}
with $c_3,d_1\geq0$. After multiplying by the Dulac function $(xy)^{-1}$, the divergence is $-c_3\kappa_3 x^{-2} - d_1\kappa_1 y^{-2}$, which is nonpositive, and zero only if $c_3 = d_1 = 0$. Hence, if $c_3 = d_1 = 0$ is violated then, by the Bendixson--Dulac Test, the system admits no periodic orbit. The analysis of the case $c_3 = d_1 = 0$ is standard: see the proofs of \Cref{lem:divergence} and \Cref{lem:two_param_families}.

\item \textbf{Case \boxed{8} $\mathsf{2X}$, $\mathsf{X+Y}$, $\mathsf{X}$.}
After division by $x$, the differential equation reads    
\begin{align*}
    \dot{x} &= c_1\kappa_1 x + c_2\kappa_2 y + c_3\kappa_3,  \\
    \dot{y} &= d_1\kappa_1 x + d_2\kappa_2 y + d_3\kappa_3 
\end{align*}
with $d_1 \geq 0$ and $d_3 \geq 0$ (since $b_1=0$ and $b_3=0$).
This linear system has a periodic orbit (in fact, a center) if and only if there is a unique positive equilibrium at which $\det J>0$, and $\tr J=0$.

Assume the system admits a periodic orbit. 
Then $d_2<0$ (otherwise $\dot{y}\geq0$ in $\mathbb{R}^2_+$). Since $\tr J = c_1\kappa_1+d_2\kappa_2$, we find that $c_1>0$. Hence, from $u_3>0$ (see \eqref{eq:sgn_chain}) we obtain $c_2d_1<0$, which implies $d_1>0$ and $c_2<0$. From $u_2>0$ it follows that $c_3>0$. In fact, since additionally $c_2, d_2\geq-1$ (because $a_2=b_2=1$), both $c_2$ and $d_2$ equal to $-1$. This gives all the sign conditions on $c_i$, $d_i$ in the statement of the theorem. The inequalities $\frac{d_3}{c_3} < 1 < \frac{d_1}{c_1}$ are equivalent to $u_1$, $u_2$, $u_3>0$. Therefore the conditions in the statement are necessary and sufficient for the existence of a periodic solution.

Clearly, by varying the ratio $\frac{\kappa_1}{\kappa_2}$, one obtains a vertical Andronov--Hopf bifurcation at $\frac{\kappa_1}{\kappa_2}=-\frac{d_2}{c_1}$.

\item \textbf{Case \boxed{9} $\mathsf{2X}$, $\mathsf{X+Y}$, $\mathsf{Y}$.}
For an Andronov--Hopf bifurcation in \eqref{eq:three_reactions_ode}, there must exist a unique positive equilibrium $(\bar{x},\bar{y})$ at which $\det J>0$ and $\tr J=0$ hold.
By \eqref{eq:trJ}, $\tr J = \mu\left(\frac{1}{\bar{x}}(c_1u_1- c_3u_3)-\frac{1}{\bar{y}}d_1u_1\right)$, where we used the general observations that $\sum_{i=1}^3c_iu_i = 0$ and $\sum_{i=1}^3d_iu_i = 0$ (recall that $u=c\times d$).

Case $d_1 = 0$. In this case, $\tr J = 0$ if and only if $c_1u_1 = c_3u_3$. Thus, $c_1>0$ and $c_3>0$ ($c_3\geq0$ is given, since $a_3=0$). Thus, $c_2=-1$ (because $c_2\geq-1$ and $c_2<0$). Also, $d_3 = -1$, because $u_2>0$ implies $d_3<0$. Also, $d_2>0$, because $u_3>0$. Hence, $0 < c_3d_2 < 1$, because $u_1>0$. However, this has no integer solution.

Case $d_1>0$. Then $c_1u_1 > c_3u_3$, and hence, $c_1>0$ and $c_2<0$ (in fact, $c_2=-1$). If $c_3=0$, we find that $u_i>0$ is equivalent to $d_1>0$, $d_3<0$, and $\frac{d_2}{c_2} < \frac{d_1}{c_1}$. If $c_3>0$, we find that $u_i>0$ and $c_1u_1 > c_3u_3$ are equivalent to $\frac{1}{2} \left(\frac{d_3}{c_3}+\frac{d_1}{c_1}\right) < \frac{d_2}{c_2} < \frac{d_1}{c_1}$.

To see that the Andronov--Hopf bifurcation is nondegenerate and, in fact, supercritical, one computes the first focal value and finds it is negative. See the Mathematica Notebook \cite{boros:2022} for the calculations.

\item \textbf{Case \boxed{10} $\mathsf{2X}$, $\mathsf{X+Y}$, $\mathsf{0}$.} As in the proof of case \boxed{9}, for an Andronov--Hopf bifurcation to occur there must exist a unique positive equilibrium $(\bar{x},\bar{y})$ at which $\det J>0$ and $\tr J=0$ hold.

By \eqref{eq:trJ}, $\tr J = \mu\left(\frac{1}{\bar{x}}(c_1u_1- c_3u_3)+\frac{1}{\bar{y}}d_2u_2\right)$, where we used $\sum_{i=1}^3c_iu_i = 0$ (recall that $u=c\times d$).

Since at least one of $d_1$, $d_2$ or $d_3$ must be negative (as the network must be dynamically nontrivial), we find $d_2 = -1$. Then $c_1u_1 > c_3u_3$ implies $c_1>0$. Hence, $c_2 = -1$. Since $u_1>0$, we also find $c_3>0$. Then, as in the proof of case \boxed{9}, we find that the system admits an Andronov--Hopf bifurcation if and only if $\frac{1}{2} \left(\frac{d_3}{c_3}+\frac{d_1}{c_1}\right) < \frac{d_2}{c_2} < \frac{d_1}{c_1}$ holds.

That the first focal value is negative is shown again in  \cite{boros:2022}. Hence, the Andronov--Hopf bifurcation is  supercritical.
\end{itemize}
\end{proof}

\subsection{Realisations of the oscillating systems in \texorpdfstring{\Cref{thm:ten_src_triples}}{}}

In case \boxed{7}, up to exchange of $\mathsf{X}$ and $\mathsf{Y}$, the mass-action systems with a center for all $\kappa$ are given by
\begin{align}\label{eq:case_7}
\begin{aligned}
\begin{tikzpicture}[scale=0.6]

\node[left]  (P1) at (0, 0) {$\mathsf{X}$};
\node[right] (P2) at (1, 0) {$(1+c)\mathsf{X}$};
\node[left]  (P3) at (0,-1) {$\mathsf{X+Y}$};
\node[right] (P4) at (1,-1) {$(1+d)\mathsf{Y}$};
\node[left]  (P5) at (0,-2) {$\mathsf{Y}$};
\node[right] (P6) at (1,-2) {$\mathsf{0}$};

\draw[->] (P1) to node[above] {$\kappa_1$} (P2);
\draw[->] (P3) to node[above] {$\kappa_2$} (P4);
\draw[->] (P5) to node[above] {$\kappa_3$} (P6);

\node at (8,-1)
{$\begin{aligned}
\dot{x} &= \kappa_1 c x - \kappa_2 xy, \\
\dot{y} &= \kappa_2 d xy - \kappa_3 y \\
\end{aligned}$};

\end{tikzpicture}
\end{aligned}
\end{align}
with $c,d\geq1$. Note that this is one of the families of networks, namely \eqref{eq:two_param_families_left}, obtained in \Cref{lem:two_param_families}, and this family also appears in \cite[Eq. (5)]{farkas:noszticzius:1985}. For $c=d=1$ the network is bimolecular and is, in fact, the Lotka reactions \eqref{eq:lotka_only}. Since the qualitative picture is the same for all $\kappa$, these systems admit no bifurcation as $\kappa$ is varied.

In case \boxed{8}, no tetramolecular network admits a vertical Andronov--Hopf bifurcation. The pentamolecular examples are
\begin{align*}
\begin{tikzpicture}[scale=0.6]

\node[left]  (P1) at (0, 0) {$2\mathsf{X}$};
\node[right] (P2) at (1, 0) {$3\mathsf{X}+2\mathsf{Y}$};
\node[left]  (P3) at (0,-1) {$\mathsf{X+Y}$};
\node[right] (P4) at (1,-1) {$\mathsf{0}$};
\node[left]  (P5) at (0,-2) {$\mathsf{X}$};
\node[right] (P6) at (1,-2) {$(1+c)\mathsf{X}+d\mathsf{Y}$};

\draw[->] (P1) to node[above] {\footnotesize $\kappa_1$} (P2);
\draw[->] (P3) to node[above] {\footnotesize $\kappa_2$} (P4);
\draw[->] (P5) to node[above] {\footnotesize $\kappa_3$} (P6);

\node at (10,-1)
{$\begin{aligned}
\dot{x} &= \kappa_1 x^2  - \kappa_2 xy + c\kappa_3 x, \\
\dot{y} &= 2\kappa_1 x^2 - \kappa_2 xy + d\kappa_3 x, \\
\end{aligned}$};

\end{tikzpicture}
\end{align*}
where $0\leq d<c$ and $c+d\leq 4$.

The tetramolecular networks in case \boxed{9} that admit an Andronov--Hopf bifurcation are
\begin{align}\label{eq:tetra}
\begin{aligned}
\begin{tikzpicture}[scale=0.6]

\node[left]  (P1) at (0, 0) {$2\mathsf{X}$};
\node[right] (P2) at (1, 0) {$3\mathsf{X}+\mathsf{Y}$};
\node[left]  (P3) at (0,-1) {$\mathsf{X+Y}$};
\node[right] (P4) at (1,-1) {$(1+d)\mathsf{Y}$};
\node[left]  (P5) at (0,-2) {$\mathsf{Y}$};
\node[right] (P6) at (1,-2) {$\mathsf{0}$};

\draw[->] (P1) to node[above] {\footnotesize $\kappa_1$} (P2);
\draw[->] (P3) to node[above] {\footnotesize $\kappa_2$} (P4);
\draw[->] (P5) to node[above] {\footnotesize $\kappa_3$} (P6);

\node at (10,-1)
{$\begin{aligned}
\dot{x} &= \kappa_1 x^2 - \kappa_2 xy, \\
\dot{y} &= \kappa_1 x^2 + d\kappa_2 xy - \kappa_3 y \\
\end{aligned}$};

\end{tikzpicture}
\end{aligned}
\end{align}
with $d=0,1,2,3$.

In case \boxed{10}, no hexamolecular network admits an Andronov--Hopf bifurcation. The heptamolecular examples are
\begin{align}\label{eq:hepta}
\begin{aligned}
\begin{tikzpicture}[scale=0.6]

\node[left]  (P1) at (0, 0) {$2\mathsf{X}$};
\node[right] (P2) at (1, 0) {$4\mathsf{X}+3\mathsf{Y}$};
\node[left]  (P3) at (0,-1) {$\mathsf{X+Y}$};
\node[right] (P4) at (1,-1) {$\mathsf{0}$};
\node[left]  (P5) at (0,-2) {$\mathsf{0}$};
\node[right] (P6) at (1,-2) {$c\mathsf{X}+d\mathsf{Y}$};

\draw[->] (P1) to node[above] {\footnotesize $\kappa_1$} (P2);
\draw[->] (P3) to node[above] {\footnotesize $\kappa_2$} (P4);
\draw[->] (P5) to node[above] {\footnotesize $\kappa_3$} (P6);

\node at (10,-1)
{$\begin{aligned}
\dot{x} &= 2\kappa_1 x^2 - \kappa_2 xy + c \kappa_3, \\
\dot{y} &= 3\kappa_1 x^2 - \kappa_2 xy - d \kappa_3, \\
\end{aligned}$};

\end{tikzpicture}
\end{aligned}
\end{align}
where $c>0$, $d\geq0$, $c+d \leq 7$ and $\frac{d}{c}<\frac12$.
\section{The analysis of three-reaction, quadratic, trimolecular systems}\label{sec:n_3_2}

Our main goal in this section is to extend \Cref{thm:planar_trimolec} to an arbitrary number of species, thereby obtaining \Cref{thm:intro_trimolec}, namely, no three-reaction, quadratic, trimolecular, mass-action system has an isolated periodic orbit. In fact, we prove more in \Cref{thm:ndim_trimolec}, where mass-action systems in this class which admit a periodic orbit are fully characterized.

Recall from \Cref{subsec:MA} that a three-reaction mass-action system which admits a periodic orbit must have rank two. We thus focus on $(n,3,2)$ systems. In \Cref{subsec:fold}, we demonstrate by an example that $(n,3,2)$ mass-action systems with $n\geq3$ can have multiple isolated positive equilibria in a stoichiometric class, a phenomenon that does not occur when $n=2$, and makes the general case slightly more complicated. In \Cref{subsec:LVA_lifted}, we study a family of networks that is related to the generalised LVA \eqref{eq:LVA_d}, and which plays an important role in \Cref{thm:ndim_trimolec}. In \Cref{subsec:main_result}, we state and prove our main result, \Cref{thm:ndim_trimolec}. The proof uses three lemmas that are stated and proved in \Cref{subsec:2X_3X}.

\subsection{Number of equilibria} \label{subsec:fold}

For $(2,3,2)$ networks, by \Cref{lem:n_n+1_n_equil}, whenever there is an isolated positive equilibrium, there is exactly one positive equilibrium for each choice of $\kappa$. For $(n,3,2)$ networks with $n\geq3$, the number of isolated positive equilibria may depend on the positive stoichiometric class in question. In fact, even if an equilibrium is unique on its stoichiometric class, it can be degenerate. To illustrate these statements, consider the bimolecular $(3,3,2)$ mass-action system
\begin{align*}
\begin{tikzpicture}[scale=0.6]

\node[left]  (P1) at (0, 0) {$\mathsf{X}+\mathsf{Y}$};
\node[right] (P2) at (1, 0) {$2\mathsf{Z}$};
\node[left]  (P3) at (0,-1) {$2\mathsf{Z}$};
\node[right] (P4) at (1,-1) {$2\mathsf{X}$};
\node[left]  (P5) at (0,-2) {$\mathsf{Z}$};
\node[right] (P6) at (1,-2) {$\mathsf{Y}$};

\draw[->] (P1) to node[above] {\footnotesize $\kappa_1$} (P2);
\draw[->] (P3) to node[above] {\footnotesize $\kappa_2$} (P4);
\draw[->] (P5) to node[above] {\footnotesize $\kappa_3$} (P6);

\node at (8,-1)
{$\begin{aligned}
\dot{x} &= 2 \kappa_2 z^2 - \kappa_1 x y, \\
\dot{y} &= \kappa_3 z     - \kappa_1 x y, \\
\dot{z} &= 2 \kappa_1 x y - 2 \kappa_2 z^2 - \kappa_3 z.
\end{aligned}$};

\end{tikzpicture}
\end{align*}
The set of positive equilibria is the hyperbola $\{(x,y,z) \in \mathbb{R}^3_+ \colon xy=\frac{\kappa_3^2}{2\kappa_1\kappa_2}, z = \frac{\kappa_3}{2\kappa_2}\}$, and the positive stoichiometric classes are $\mathcal{P}_C=\{(x,y,z)\in\mathbb{R}^3_+\colon x+y+z=C\}$ for $C>0$. For any fixed rate constants, the number of positive equilibria in $\mathcal{P}_C$ is $0$, $1$, or $2$, depending on $C$. In fact, the system admits fold bifurcations of equilibria, a phenomenon that is ruled out for $(2,3,2)$ mass-action systems.

\subsection{The Lifted LVA} \label{subsec:LVA_lifted}

The networks we study in this subsection play a special role in the main result, \Cref{thm:ndim_trimolec}.
We consider the family of mass-action systems
\begin{align}\label{eq:LVA_d_lifted}
\begin{aligned}
\begin{tikzpicture}[scale=0.6]

\node[left]  (P1) at (0, 0) {$2\mathsf{X}$};
\node[right] (P2) at (1, 0) {$3\mathsf{X}$};
\node[left]  (P3) at (0,-1) {$\mathsf{X+Y}$};
\node[right] (P4) at (1,-1) {$(d+1)\mathsf{Y}+d\mathsf{Z}$};
\node[left]  (P5) at (0,-2) {$\mathsf{Y+Z}$};
\node[right] (P6) at (1,-2) {$\mathsf{0}$};

\draw[->] (P1) to node[above] {\footnotesize $\kappa_1$} (P2);
\draw[->] (P3) to node[above] {\footnotesize $\kappa_2$} (P4);
\draw[->] (P5) to node[above] {\footnotesize $\kappa_3$} (P6);

\node at (10,-1)
{$\begin{aligned}
\dot{x} &= \kappa_1 x^2 - \kappa_2 xy, \\
\dot{y} &= d\kappa_2 xy - \kappa_3 yz, \\
\dot{z} &= d\kappa_2 xy - \kappa_3 yz
\end{aligned}$};

\end{tikzpicture}
\end{aligned}
\end{align}
with $d\geq1$. Each member of this family is obtained by adding a new species, $\mathsf{Z}$, to networks in the generalised LVA family \eqref{eq:LVA_d} while preserving the rank of the network. The case $d=1$ corresponds to the Lifted LVA \eqref{eq:LVA_lifted}, and consequently we refer to the networks in \eqref{eq:LVA_d_lifted} as the \lq\lq Lifted LVA family\rq\rq. Note that the Lifted LVA is the only trimolecular network in this family. 

Observe that as the networks in the Lifted LVA family clearly have no trivial species, any periodic orbits of the corresponding mass-action systems must be positive (see \Cref{subsec:saddle_no_periodic}), and we can restrict attention to the positive orthant. The set of positive equilibria of \eqref{eq:LVA_d_lifted} is the ray $\{(t, \frac{\kappa_1}{\kappa_2}t, \frac{\kappa_2d}{\kappa_3}t)\colon t>0\}$. Since $\dot y = \dot z$, the stoichiometric classes are given by the planes $z = y + C$. Hence, in local coordinates on a positive stoichiometric class, the differential equation \eqref{eq:LVA_d_lifted} reduces to the 2d Lotka--Volterra system
\begin{align}
\label{eq:lifted_LVA_2d_LV}
\begin{split}
\dot{x} &= x(\kappa_1 x -\kappa_2 y), \\
\dot{y} &= y(d\kappa_2 x - \kappa_3 (y+C)).
\end{split}
\end{align}
To analyse \eqref{eq:lifted_LVA_2d_LV}, we apply the coordinate transformation
$v= \frac{y}{x}$, $w=\frac{1}{x}$, which takes $\mathbb{R}^2_+$ to $\mathbb{R}^2_+$ and,
after a multiplication by $w$, transforms \eqref{eq:lifted_LVA_2d_LV} into the Lotka--Volterra system 
\begin{align}
\label{eq:lifted_LVA_pred_prey}
\begin{split}
\dot{v} &= v((d\kappa_2 - \kappa_1) + (\kappa_2 -\kappa_3) v - \kappa_3 C w), \\
\dot{w} &= w(-\kappa_1 + \kappa_2 v) .
\end{split}
\end{align}
\Cref{eq:lifted_LVA_pred_prey} has a positive equilibrium if and only if $C > 0$ and $d\kappa_2^2 >  \kappa_1 \kappa_3$, and in this case the positive equilibrium, $(\bar v, \bar w)$, is unique, and the Jacobian determinant of the system evaluated at this equilibrium is positive. For $C>0$, the predator--prey system \eqref{eq:lifted_LVA_pred_prey} has a  Lyapunov function
$V(v,w) = \kappa_2 (v-\bar v \log v) + \kappa_3 C (w- \bar w \log w)$, which is convex, attains its unique minimum at $(\bar v, \bar w)$, and satisfies $\dot V = \kappa_2 (\kappa_2 -\kappa_3)(v - \bar v)^2$, see e.g.\ \cite[Section 2.7]{hofbauer:sigmund:1998}. 
Therefore, for $\kappa_2 < \kappa_3$, the positive equilibrium is  globally asymptotically stable (on $\mathbb{R}^2_+$), for $\kappa_2 = \kappa_3$ it is a global center, and for $\kappa_2 > \kappa_3$, it is a global repellor. Therefore systems in the Lifted LVA family \eqref{eq:LVA_d_lifted} undergo a vertical Andronov--Hopf bifurcation, as 
$\frac{\kappa_2}{\kappa_3}$ increases through the value 1, provided $\kappa_1 < d\kappa_2$. This happens simultaneously in all stoichiometric classes with $C > 0$. In particular, isolated periodic orbits cannot occur in this family. 

\subsection{Main result} \label{subsec:main_result}

We are now in the position to state our main result. Apart from the results already proved, the proof also relies on three lemmas which are stated and proved in \Cref{subsec:2X_3X}.

\begin{theorem}\label{thm:ndim_trimolec}
Assume that a three-reaction, quadratic, trimolecular, mass-action system with no trivial species has a periodic orbit. Then, up to a permutation of the species, the differential equation is one of the following.
\begin{align*}
\begin{tikzpicture}

\node at (0,1/2) {(I)};
\node[below] at (0,0)
{$\begin{aligned}
\dot{x} &= x(\kappa_1 c - \kappa_2 y) \\
\dot{y} &= y(\kappa_2 d x-\kappa_3)
\end{aligned}$};
\node at (0,-2.25) {$\begin{array}{c} \kappa_1, \kappa_2, \kappa_3>0 \\ c,d \in \{1,2\}\end{array}$};

\node at (5,1/2) {(II)};
\node[below] at (5,0)
{$\begin{aligned}
    \dot{x} &= x(\kappa_1 z-\kappa_2 y) \\
    \dot{y} &= y(\kappa_2 x-\kappa_3 z) \\
    \dot{z} &= z(\kappa_3 y-\kappa_1 x)
\end{aligned}$};
\node at (5,-2.25) {$\kappa_1, \kappa_2, \kappa_3>0$};

\node at (10,1/2) {(III)};
\node[below] at (10,0)
{$\begin{aligned}
    \dot{x} &= x(\kappa_1 x - \kappa_2 y) \\
    \dot{y} &= y(\kappa_2 x - \kappa_2 z) \\
    \dot{z} &= y(\kappa_2 x - \kappa_2 z)
\end{aligned}$};
\node at (10,-2.25) {$0<\kappa_1<\kappa_2$};

\end{tikzpicture}
\end{align*}
Note that
\begin{itemize}
    \item equation (I) is the generalised Lotka ODE \eqref{eq:case_7}, and the unique positive equilibrium is a global center,
    \item equation (II) is the Ivanova ODE \eqref{eq:ivanova_only}, and in each positive stoichiometric class $x+y+z=C>0$ the unique positive equilibrium is a global center,
    \item equation (III) is the Lifted LVA ODE \eqref{eq:LVA_lifted} with $\kappa_2=\kappa_3>\kappa_1$, and the unique positive equilibrium in each positive stoichiometric class $z-y=C>0$ is a global center, while there is no positive equilibrium in the positive stoichiometric classes $z-y=C\leq0$.
\end{itemize}
In particular, mass-action systems satisfying the conditions of the theorem admit no isolated periodic orbits.
\end{theorem}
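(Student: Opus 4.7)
My plan is to split into two cases, according to whether the network contains a reaction of the form \eqref{eq:rxn_pos_div}. By the discussion in \Cref{subsec:MA}, the system must have rank two, and by \Cref{subsec:saddle_no_periodic}, in the absence of trivial species any periodic orbit is positive; \Cref{lem:sources_on_a_line} further forces the three source complexes to be affinely independent.

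\emph{Case 1: no reaction of the form \eqref{eq:rxn_pos_div} is present.} Here I would invoke \Cref{lem:two_param_families} directly: it identifies three candidate classes, namely the Ivanova reactions, the family \eqref{eq:two_param_families_left}, and the family \eqref{eq:two_param_families_right}. I would then intersect each class with the trimolecularity hypothesis. The Ivanova network is already trimolecular and yields ODE (II). In \eqref{eq:two_param_families_left}, trimolecularity forces $c,d\in\{1,2\}$, giving exactly ODE (I). In \eqref{eq:two_param_families_right}, the target $(1+cd)\mathsf{Y}+(1+d)\mathsf{Z}$ has molecularity $2+d(c+1)\ge 4$ whenever $c,d\ge 1$, so no member is trimolecular and the family contributes nothing. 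The qualitative behaviour claimed for ODEs (I) and (II) is directly inherited from the center assertions in \Cref{lem:two_param_families}.

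\emph{Case 2: some reaction of the form \eqref{eq:rxn_pos_div} is present.} Since the network is trimolecular, such a reaction must be $2\mathsf{X}_j\to 3\mathsf{X}_j$, and after relabelling I may assume it is $2\mathsf{X}\to 3\mathsf{X}$. I would then enumerate all admissible choices for the remaining two reactions using: dynamical nontriviality, the triangle condition from \Cref{lem:sources_on_a_line}, the reduced Jacobian determinant formula \eqref{eq:detJred} (to rule out saddles), and a trace analysis along the lines of \Cref{subsec:LVA_lifted}. This enumeration is exactly what the three lemmas of \Cref{subsec:2X_3X} package up; their conclusion is that the only surviving network is the $d=1$ member of \eqref{eq:LVA_d_lifted}, i.e.\ the Lifted LVA \eqref{eq:LVA_lifted}, this being the only trimolecular member of the family. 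The analysis of \Cref{subsec:LVA_lifted}, via the change of variables $v=y/x$, $w=1/x$ to the predator--prey system \eqref{eq:lifted_LVA_pred_prey} and its Lyapunov function, then shows that periodic orbits occur precisely when $\kappa_2=\kappa_3>\kappa_1$, yielding ODE (III).

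\emph{Main obstacle.} The hard step is clearly Case 2. In the planar analogue, \Cref{lem:saddle_or_LVA} excluded periodic orbits once $2\mathsf{X}\to 3\mathsf{X}$ was present; but for $n\ge 3$ the Lifted LVA shows that rank-two lifts of the generalised LVA can genuinely give rise to a one-parameter family of periodic orbits through a vertical Andronov--Hopf bifurcation. The delicate combinatorial task is to show that the Lifted LVA is the \emph{only} such admissible lift in the trimolecular, three-reaction class. With $n$ a priori unbounded one must exploit the rank-two constraint, which forces the equations for $\dot{x}_i$ with $i\ge 3$ to depend linearly on those for $\dot{x}_1$ and $\dot{x}_2$, together with the trimolecular/quadratic molecularity bounds, to pin down uniquely both the sources and the stoichiometries of the two remaining reactions. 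Once the Lifted LVA has been isolated in this way, the rate-constant condition $\kappa_2=\kappa_3>\kappa_1$ follows cleanly from the sign of $\dot V=\kappa_2(\kappa_2-\kappa_3)(v-\bar v)^2$ in \Cref{subsec:LVA_lifted}, which separates the three regimes $\kappa_2<\kappa_3$ (global attractor), $\kappa_2=\kappa_3$ (global center), and $\kappa_2>\kappa_3$ (global repellor) in each stoichiometric class with $C>0$.
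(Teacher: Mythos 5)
Your proposal is correct and follows essentially the same route as the paper: the same case split on the presence of a reaction of the form \eqref{eq:rxn_pos_div} (equivalently, $2\mathsf{X}_j\to 3\mathsf{X}_j$ in the trimolecular setting), with Case 1 resolved by \Cref{lem:two_param_families} plus the observation that no member of \eqref{eq:two_param_families_right} is trimolecular, and Case 2 resolved by \Cref{lem:saddle_or_LVA} for $n=2$ together with the three lemmas of \Cref{subsec:2X_3X} for $n=3$, $n=4$, $n\geq 5$, and the Lyapunov-function analysis of \Cref{subsec:LVA_lifted} to pin down $\kappa_2=\kappa_3>\kappa_1$ in ODE (III). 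You correctly identify the delegation of the combinatorial work to the cited lemmas, which is exactly how the paper organises the argument.
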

\begin{proof}
Recall from \Cref{subsec:MA} that a three-reaction network, whose mass-action system admits a periodic orbit, has rank two. Hence, by remarks in \Cref{subsec:saddle_no_periodic}, the network must be dynamically nontrivial, and any periodic orbit of a system satisfying the hypotheses of the theorem must be positive. We distinguish between two cases.
\begin{itemize}
    \item In case no reaction in the network is of the form $\mathsf{2X}_i \to \mathsf{3X}_i$, by \Cref{lem:two_param_families} we find that the network is either the generalised Lotka reactions \eqref{eq:two_param_families_left} with $c,d \in \{1,2\}$, leading to (I), or the Ivanova reactions, leading to (II). Note that none of the networks in \eqref{eq:two_param_families_right} is trimolecular.
    \item Assume now that the reaction $\mathsf{2X} \to \mathsf{3X}$ is present. In case there are three species, the differential equation is (III) by \Cref{lem:saddle_or_lifted_LVA} below and the discussion in \Cref{subsec:LVA_lifted}. In case there are two, four, or at least five species then the system admits no periodic orbit by \Cref{lem:saddle_or_LVA,lem:4species_saddle,lem:5species}, respectively.
\end{itemize}
\end{proof}

Notice that \Cref{thm:intro_trimolec} is a corollary of \Cref{thm:ndim_trimolec}. Furthermore, \Cref{thm:ndim_trimolec} is an extension of \Cref{thm:planar_trimolec} to an arbitrary number of species. In fact, even though all networks in \Cref{thm:ndim_trimolec} are assumed to have no trivial species, the conclusion that three-reaction, quadratic, trimolecular, mass-action systems admit no isolated periodic orbits clearly holds true without this assumption. Based on \Cref{thm:ndim_trimolec} we can list all three-reaction, quadratic, trimolecular $(n,3,2)$ networks (including those with some trivial species), whose mass-action system admits a periodic orbit: there are $16$ such networks, see \Cref{fig:16_networks}.

\begin{figure}[h!t!]
\begin{center}
\begin{tikzpicture}[scale=0.6]

\node[left]  (P1) at (0, 0)    {$\mathsf{X}$};
\node[right] (P2) at (0.75, 0) {$2\mathsf{X}$};
\node[left]  (P3) at (0,-1)    {$\mathsf{X+Y}$};
\node[right] (P4) at (0.75,-1) {$2\mathsf{Y}$};
\node[left]  (P5) at (0,-2)    {$\mathsf{Y}$};
\node[right] (P6) at (0.75,-2) {$\mathsf{0}$};
\draw[->] (P1) to node[above] {} (P2);
\draw[->] (P3) to node[above] {} (P4);
\draw[->] (P5) to node[above] {} (P6);

\begin{scope}[shift={(0,-4)}]
\node[left]  (P1) at (0, 0)    {$\mathsf{X}$};
\node[right] (P2) at (0.75, 0) {$2\mathsf{X}$};
\node[left]  (P3) at (0,-1)    {$\mathsf{X+Y}$};
\node[right] (P4) at (0.75,-1) {$3\mathsf{Y}$};
\node[left]  (P5) at (0,-2)    {$\mathsf{Y}$};
\node[right] (P6) at (0.75,-2) {$\mathsf{0}$};
\draw[->] (P1) to node[above] {} (P2);
\draw[->] (P3) to node[above] {} (P4);
\draw[->] (P5) to node[above] {} (P6);
\end{scope}

\begin{scope}[shift={(6,0)}]
\node[left]  (P1) at (0, 0) {$\mathsf{X}$};
\node[right] (P2) at (0.75, 0) {$2\mathsf{X}$};
\node[left]  (P3) at (0,-1) {$\mathsf{X+Y}$};
\node[right] (P4) at (0.75,-1) {$2\mathsf{Y}$};
\node[left]  (P5) at (0,-2) {$\mathsf{Y+Z}$};
\node[right] (P6) at (0.75,-2) {$\mathsf{Z}$};
\draw[->] (P1) to node[above] {} (P2);
\draw[->] (P3) to node[above] {} (P4);
\draw[->] (P5) to node[above] {} (P6);
\end{scope}

\begin{scope}[shift={(6,-4)}]
\node[left]  (P1) at (0, 0) {$\mathsf{X}$};
\node[right] (P2) at (0.75, 0) {$2\mathsf{X}$};
\node[left]  (P3) at (0,-1) {$\mathsf{X+Y}$};
\node[right] (P4) at (0.75,-1) {$3\mathsf{Y}$};
\node[left]  (P5) at (0,-2) {$\mathsf{Y+Z}$};
\node[right] (P6) at (0.75,-2) {$\mathsf{Z}$};
\draw[->] (P1) to node[above] {} (P2);
\draw[->] (P3) to node[above] {} (P4);
\draw[->] (P5) to node[above] {} (P6);
\end{scope}

\begin{scope}[shift={(12,0)}]
\node[left]  (P1) at (0, 0) {$\mathsf{X}$};
\node[right] (P2) at (0.75, 0) {$3\mathsf{X}$};
\node[left]  (P3) at (0,-1) {$\mathsf{X+Y}$};
\node[right] (P4) at (0.75,-1) {$2\mathsf{Y}$};
\node[left]  (P5) at (0,-2) {$\mathsf{Y}$};
\node[right] (P6) at (0.75,-2) {$\mathsf{0}$};
\draw[->] (P1) to node[above] {} (P2);
\draw[->] (P3) to node[above] {} (P4);
\draw[->] (P5) to node[above] {} (P6);
\end{scope}

\begin{scope}[shift={(12,-4)}]
\node[left]  (P1) at (0, 0) {$\mathsf{X}$};
\node[right] (P2) at (0.75, 0) {$3\mathsf{X}$};
\node[left]  (P3) at (0,-1) {$\mathsf{X+Y}$};
\node[right] (P4) at (0.75,-1) {$3\mathsf{Y}$};
\node[left]  (P5) at (0,-2) {$\mathsf{Y}$};
\node[right] (P6) at (0.75,-2) {$\mathsf{0}$};
\draw[->] (P1) to node[above] {} (P2);
\draw[->] (P3) to node[above] {} (P4);
\draw[->] (P5) to node[above] {} (P6);
\end{scope}

\begin{scope}[shift={(18,0)}]
\node[left]  (P1) at (0, 0) {$\mathsf{X}$};
\node[right] (P2) at (0.75, 0) {$3\mathsf{X}$};
\node[left]  (P3) at (0,-1) {$\mathsf{X+Y}$};
\node[right] (P4) at (0.75,-1) {$2\mathsf{Y}$};
\node[left]  (P5) at (0,-2) {$\mathsf{Y+Z}$};
\node[right] (P6) at (0.75,-2) {$\mathsf{Z}$};
\draw[->] (P1) to node[above] {} (P2);
\draw[->] (P3) to node[above] {} (P4);
\draw[->] (P5) to node[above] {} (P6);
\end{scope}

\begin{scope}[shift={(18,-4)}]
\node[left]  (P1) at (0, 0) {$\mathsf{X}$};
\node[right] (P2) at (0.75, 0) {$3\mathsf{X}$};
\node[left]  (P3) at (0,-1) {$\mathsf{X+Y}$};
\node[right] (P4) at (0.75,-1) {$3\mathsf{Y}$};
\node[left]  (P5) at (0,-2) {$\mathsf{Y+Z}$};
\node[right] (P6) at (0.75,-2) {$\mathsf{Z}$};
\draw[->] (P1) to node[above] {} (P2);
\draw[->] (P3) to node[above] {} (P4);
\draw[->] (P5) to node[above] {} (P6);
\end{scope}

\begin{scope}[shift={(0,-8)}]
\node[left]  (P1) at (0, 0) {$\mathsf{X+Z}$};
\node[right] (P2) at (0.75, 0) {$2\mathsf{X}+\mathsf{Z}$};
\node[left]  (P3) at (0,-1) {$\mathsf{X+Y}$};
\node[right] (P4) at (0.75,-1) {$2\mathsf{Y}$};
\node[left]  (P5) at (0,-2) {$\mathsf{Y}$};
\node[right] (P6) at (0.75,-2) {$\mathsf{0}$};
\draw[->] (P1) to node[above] {} (P2);
\draw[->] (P3) to node[above] {} (P4);
\draw[->] (P5) to node[above] {} (P6);
\end{scope}

\begin{scope}[shift={(0,-12)}]
\node[left]  (P1) at (0, 0) {$\mathsf{X+Z}$};
\node[right] (P2) at (0.75, 0) {$2\mathsf{X}+\mathsf{Z}$};
\node[left]  (P3) at (0,-1) {$\mathsf{X+Y}$};
\node[right] (P4) at (0.75,-1) {$3\mathsf{Y}$};
\node[left]  (P5) at (0,-2) {$\mathsf{Y}$};
\node[right] (P6) at (0.75,-2) {$\mathsf{0}$};
\draw[->] (P1) to node[above] {} (P2);
\draw[->] (P3) to node[above] {} (P4);
\draw[->] (P5) to node[above] {} (P6);
\end{scope}

\begin{scope}[shift={(6,-8)}]
\node[left]  (P1) at (0, 0) {$\mathsf{X+Z}$};
\node[right] (P2) at (0.75, 0) {$2\mathsf{X}+\mathsf{Z}$};
\node[left]  (P3) at (0,-1) {$\mathsf{X+Y}$};
\node[right] (P4) at (0.75,-1) {$2\mathsf{Y}$};
\node[left]  (P5) at (0,-2) {$\mathsf{Y+Z}$};
\node[right] (P6) at (0.75,-2) {$\mathsf{Z}$};
\draw[->] (P1) to node[above] {} (P2);
\draw[->] (P3) to node[above] {} (P4);
\draw[->] (P5) to node[above] {} (P6);
\end{scope}

\begin{scope}[shift={(6,-12)}]
\node[left]  (P1) at (0, 0) {$\mathsf{X+Z}$};
\node[right] (P2) at (0.75, 0) {$2\mathsf{X}+\mathsf{Z}$};
\node[left]  (P3) at (0,-1) {$\mathsf{X+Y}$};
\node[right] (P4) at (0.75,-1) {$3\mathsf{Y}$};
\node[left]  (P5) at (0,-2) {$\mathsf{Y+Z}$};
\node[right] (P6) at (0.75,-2) {$\mathsf{Z}$};
\draw[->] (P1) to node[above] {} (P2);
\draw[->] (P3) to node[above] {} (P4);
\draw[->] (P5) to node[above] {} (P6);
\end{scope}

\begin{scope}[shift={(12,-8)}]
\node[left]  (P1) at (0, 0) {$\mathsf{X+Z}$};
\node[right] (P2) at (0.75, 0) {$2\mathsf{X}+\mathsf{Z}$};
\node[left]  (P3) at (0,-1) {$\mathsf{X+Y}$};
\node[right] (P4) at (0.75,-1) {$2\mathsf{Y}$};
\node[left]  (P5) at (0,-2) {$\mathsf{Y+W}$};
\node[right] (P6) at (0.75,-2) {$\mathsf{W}$};
\draw[->] (P1) to node[above] {} (P2);
\draw[->] (P3) to node[above] {} (P4);
\draw[->] (P5) to node[above] {} (P6);
\end{scope}

\begin{scope}[shift={(12,-12)}]
\node[left]  (P1) at (0, 0) {$\mathsf{X+Z}$};
\node[right] (P2) at (0.75, 0) {$2\mathsf{X}+\mathsf{Z}$};
\node[left]  (P3) at (0,-1) {$\mathsf{X+Y}$};
\node[right] (P4) at (0.75,-1) {$3\mathsf{Y}$};
\node[left]  (P5) at (0,-2) {$\mathsf{Y+W}$};
\node[right] (P6) at (0.75,-2) {$\mathsf{W}$};
\draw[->] (P1) to node[above] {} (P2);
\draw[->] (P3) to node[above] {} (P4);
\draw[->] (P5) to node[above] {} (P6);
\end{scope}

\begin{scope}[shift={(18,-8)}]
\node[left]  (P1) at (0, 0) {$\mathsf{Z+X}$};
\node[right] (P2) at (0.75, 0) {$2\mathsf{X}$};
\node[left]  (P3) at (0,-1) {$\mathsf{X+Y}$};
\node[right] (P4) at (0.75,-1) {$2\mathsf{Y}$};
\node[left]  (P5) at (0,-2) {$\mathsf{Y+Z}$};
\node[right] (P6) at (0.75,-2) {$2\mathsf{Z}$};
\draw[->] (P1) to node[above] {} (P2);
\draw[->] (P3) to node[above] {} (P4);
\draw[->] (P5) to node[above] {} (P6);
\end{scope}

\begin{scope}[shift={(18,-12)}]
\node[left]  (P1) at (0, 0) {$2\mathsf{X}$};
\node[right] (P2) at (0.75, 0) {$3\mathsf{X}$};
\node[left]  (P3) at (0,-1) {$\mathsf{X+Y}$};
\node[right] (P4) at (0.75,-1) {$2\mathsf{Y}+\mathsf{Z}$};
\node[left]  (P5) at (0,-2) {$\mathsf{Y+Z}$};
\node[right] (P6) at (0.75,-2) {$\mathsf{0}$};
\draw[->] (P1) to node[above] {} (P2);
\draw[->] (P3) to node[above] {} (P4);
\draw[->] (P5) to node[above] {} (P6);
\end{scope}

\draw (21.5,-7)--(15.5,-7)--(15.5,-15);
\draw (15.5,-11)--(21.5,-11);
\draw (-2.5,1)--(21.5,1)--(21.5,-15)--(-2.5,-15)--(-2.5,1);

\end{tikzpicture}
\end{center}
\caption{The list of all three-reaction, quadratic, trimolecular $(n,3,2)$ networks whose mass-action system has a periodic orbit for some rate constants. There are sixteen such networks. Four are members of the family \eqref{eq:case_7}, eight are derived from these by adding a trivial species, and two are obtained by adding two trivial species. The latter two are the only ones with four species. The Ivanova reactions and the Lifted LVA complete the list. Notice that the only ones that are bimolecular are the Lotka reactions \eqref{eq:lotka_only} and the Ivanova reactions \eqref{eq:ivanova_only}.}
\label{fig:16_networks}
\end{figure}

\subsection{Networks with the reaction \texorpdfstring{$\mathsf{2X} \to \mathsf{3X}$}{}} \label{subsec:2X_3X}

In this subsection, we prove three lemmas about quadratic $(n,3,2)$ networks without trivial species that include the reaction $\mathsf{2X} \to \mathsf{3X}$; these results are used in the proof of \Cref{thm:ndim_trimolec} above. In \Cref{lem:saddle_or_lifted_LVA} we discuss the case $n=3$ and show that the only networks which lead to a periodic orbit are those in the Lifted LVA family \eqref{eq:LVA_d_lifted}. In \Cref{lem:4species_saddle} we prove that when $n=4$ any positive equilibrium must be a saddle. Finally, in \Cref{lem:5species} we find that no network admits a positive equilibrium when $n\geq5$.

\begin{lemma}\label{lem:saddle_or_lifted_LVA}
Suppose a quadratic $(3,3,2)$ mass-action system with no trivial species includes the reaction $2\mathsf{X} \to 3\mathsf{X}$ and has a periodic orbit for some rate constants. Then the network is a member of the Lifted LVA family \eqref{eq:LVA_d_lifted}.
\end{lemma}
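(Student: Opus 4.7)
The plan is to exploit the rank-two structure. Since the system admits a periodic orbit, \Cref{subsec:MA} and \Cref{subsec:saddle_no_periodic} imply that the network has rank two and is dynamically nontrivial. Write $\gamma^1=(1,0,0)^\top$ for the reaction vector of $R_1$, and let $\gamma^2,\gamma^3$ be the other reaction vectors with bimolecular sources $s^2,s^3$. The one-dimensional conservation law $\ell\in\ker\Gamma^\top$ satisfies $\ell_1=0$ (from $\ell\cdot\gamma^1=0$), and the no-trivial-species hypothesis then forces $\ell_2\ell_3\neq 0$. A positive $u\in\ker\Gamma$ satisfies $u_2\gamma^2_i+u_3\gamma^3_i=0$ for $i=2,3$, so each of the pairs $(\gamma^2_2,\gamma^3_2)$ and $(\gamma^2_3,\gamma^3_3)$ consists of nonzero entries of opposite signs.

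After relabelling $R_2,R_3$ if necessary, I assume $\gamma^2_2\geq 1$ and split on the sign of $\gamma^2_3$: Case A ($\gamma^2_3\geq 1,\gamma^3_3\leq -1$) and Case B ($\gamma^2_3\leq -1,\gamma^3_3\geq 1$). In Case A, $\gamma^3_2,\gamma^3_3\leq -1$ combined with bimolecularity forces $s^3=\mathsf{Y+Z}$ with $\gamma^3_2=\gamma^3_3=-1$, and the conservation relation $\ell\cdot\gamma^2=0$ forces $\gamma^2_2=\gamma^2_3=:p\geq 1$. The trimolecular bound $\gamma^2_1+2p\leq 3-|s^2|$ combined with $\gamma^2_1\geq -|s^2|$ forces $p=1$, and the positivity condition $\gamma^2_1+p\gamma^3_1<0$ derived from $u_1>0$ then pins down $\gamma^2_1\leq -1$, so $s^2_1\geq 1$. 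A brief enumeration of $s^2\in\{\mathsf{X},2\mathsf{X},\mathsf{X+Y},\mathsf{X+Z}\}$ then shows that only $s^2\in\{\mathsf{X+Y},\mathsf{X+Z}\}$ paired with $\gamma^3_1=0$ (so $R_3:\mathsf{Y+Z}\to 0$) gives genuinely two-dimensional dynamics with a positive equilibrium. In every other subcase periodic orbits are ruled out by an elementary argument: either there is no positive equilibrium for generic rate constants, or $\dot x$ decouples from $y,z$ (so $x(t)$ is monotone), or the rate-constant tuning required for equilibration produces an extra conservation law that reduces the dynamics to one dimension. The surviving networks are exactly the Lifted LVA \eqref{eq:LVA_d_lifted} with $d=1$, up to swapping $\mathsf{Y}\leftrightarrow\mathsf{Z}$.

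In Case B, I aim to show that every positive equilibrium is a saddle, using the reduced Jacobian determinant formula \eqref{eq:detJred}. After row-reducing $\widetilde\Gamma$ so that its top $2\times 2$ block is the identity, its third row is $(0,\rho)$ with $\rho=\gamma^2_3/\gamma^2_2<0$, so the only contributing minors are $\widetilde\Gamma[\{1,2\},\{1,2\}]=1$ and $\widetilde\Gamma[\{1,3\},\{1,2\}]=\rho$. A direct substitution into \eqref{eq:detJred} gives a bracketed factor of the form $[(2-s^3_1)s^2_2-(2-s^2_1)s^3_2]/\bar y+\rho[(2-s^3_1)s^2_3-(2-s^2_1)s^3_3]/\bar z$. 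In Case B, $s^2_3,s^3_2\geq 1$ and bimolecularity force $s^2_1,s^3_1\leq 1$ (so $2-s^2_1,2-s^3_1\geq 1$) and $s^2_2,s^3_3\in\{0,1\}$. A three-way split on $(s^2_2,s^3_3)\in\{0,1\}^2$ then handles all subcases: if both are zero, both summands are strictly negative; if exactly one equals $1$ then the corresponding source is $\mathsf{Y+Z}$, the first summand is $\leq 0$, and the $\rho$-summand is strictly negative; if both equal $1$ then $s^2=s^3=\mathsf{Y+Z}$ and the three sources are collinear, so \Cref{lem:sources_on_a_line} applies directly. In the three non-collinear subcases, $\det J_{\mathrm{red}}<0$ at every positive equilibrium, which by \Cref{subsec:saddle_no_periodic} rules out periodic orbits.

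The main obstacle is organising the Case B sign analysis: at first glance there are many admissible pairs $(s^2,s^3)$, but the bimolecular constraint and $s^2_3,s^3_2\geq 1$ collapse them to the clean three-way split above. The delicate edge case is the fully symmetric $s^2=s^3=\mathsf{Y+Z}$, which is not handled by the Jacobian-sign argument and instead requires \Cref{lem:sources_on_a_line}. Case A, while superficially requiring a four-way enumeration of $s^2$, is quickly dispatched by the observation that in the three non-Lifted-LVA choices the dynamics reduce to a one-dimensional system either via a decoupled $x$-equation or via an extra, $\kappa$-dependent conservation law.
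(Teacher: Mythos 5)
Your overall strategy tracks the paper's quite closely: isolate $\gamma^1=e_1$, split according to the sign pattern of the $\mathsf{Y}$- and $\mathsf{Z}$-components of the other two reaction vectors, kill one branch by showing every positive equilibrium is a saddle via \eqref{eq:detJred}, and identify the Lifted LVA in the other. Your Case B computation is essentially sound; the only blemish is the subcase $s^2_2=0$, $s^3_3=1$, where the $\rho$-summand can vanish (when $s^2=\mathsf{Z}$ one gets $B=2s^2_3-(2-s^2_1)=0$), so it is the first summand, not the $\rho$-summand, that is strictly negative there --- the conclusion $\det J_{\mathrm{red}}<0$ still holds.

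Case A, however, contains a genuine error. You invoke \lq\lq the trimolecular bound $\gamma^2_1+2p\leq 3-|s^2|$\rq\rq, but trimolecularity is \emph{not} a hypothesis of the lemma: \lq\lq quadratic\rq\rq\ restricts only the \emph{sources} to be bimolecular, and the targets are unrestricted. Indeed the lemma's conclusion is membership in the full family \eqref{eq:LVA_d_lifted}, whose second reaction has a target of molecularity $2d+1$ for arbitrary $d\geq1$. Your bound forces $p=1$ and leads you to assert that \lq\lq the surviving networks are exactly the Lifted LVA with $d=1$\rq\rq. That statement is false: every member of \eqref{eq:LVA_d_lifted} with $d\geq2$ is a quadratic $(3,3,2)$ system with no trivial species containing $2\mathsf{X}\to3\mathsf{X}$ that admits periodic orbits when $\kappa_2=\kappa_3$ and $\kappa_1<d\kappa_2$ (see \Cref{subsec:LVA_lifted}), yet your argument would exclude it, so the proof as written is unsound. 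The repair is straightforward --- dropping the bound, your own chain of deductions ($s^2_1\geq1$ forces $\gamma^2_1=-1$, and then $\gamma^2_1+p\gamma^3_1<0$ forces $\gamma^3_1=0$) already yields $R_2\colon\mathsf{X+Y}\to(p+1)\mathsf{Y}+p\mathsf{Z}$ and $R_3\colon\mathsf{Y+Z}\to0$ for arbitrary $p\geq1$, i.e.\ the family with $d=p$ --- but the step must be rewritten. Separately, your dispatching of $s^2\in\{\mathsf{X},2\mathsf{X}\}$ is too vague to check: \lq\lq no positive equilibrium for generic rate constants\rq\rq\ does not exclude periodic orbits at the non-generic rate constants, so you should state which argument applies where (for $s^2=2\mathsf{X}$ the sources are collinear and \Cref{lem:sources_on_a_line} applies; for $s^2=\mathsf{X}$ dynamical nontriviality forces $R_3\colon\mathsf{Y+Z}\to0$, after which $\dot{x}$ genuinely decouples).
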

\begin{proof}
Consider a network satisfying the assumptions of the lemma. Any periodic orbit must be positive and the network must be dynamically nontrivial (see \Cref{subsec:saddle_no_periodic}); consequently each species is gained in at least one reaction and is lost in at least one reaction. In $\mathsf{2X} \to \mathsf{3X}$, an $\mathsf{X}$ is gained. W.l.o.g.\ assume that $\mathsf{X}$ is lost in the second reaction. Then the source of that reaction is either $\mathsf{X}$ or $\mathsf{X+Y}$ (note that the second source cannot be $2\mathsf{X}$ by \Cref{lem:sources_on_a_line}). Moreover, the target of the reaction does not include $\mathsf{X}$.

Suppose that the source of the second reaction is $\mathsf{X}$. As the network is dynamically nontrivial, the third reaction must be $\mathsf{Y+Z} \to a \mathsf{X}$ (for some $a\geq0$). To guarantee that the rank of the network is two, the second reaction must then be $\mathsf{X} \to b\mathsf{Y}+b\mathsf{Z}$ (for some $b\geq1$). As $\ker \Gamma$ is spanned by $[1-ab,1,1]^\top$, the network is dynamically nontrivial if and only if $a=0$. A short computation shows that the reduced Jacobian determinant \eqref{eq:detJred} at a positive equilibrium $(x,y,z)$ 
equals $-\frac{b}{xy}-\frac{b}{xz}$, which is negative. Thus, any positive equilibrium is a saddle, contradicting the occurrence of a periodic orbit  (see \Cref{subsec:saddle_no_periodic}). 

Suppose now that the source of the second reaction is $\mathsf{X+Y}$ and additionally suppose that $\mathsf{Y}$ is lost in that reaction. Then $\mathsf{Y}$ is gained in the third reaction. Further, $\mathsf{Z}$ must be gained in the second reaction and lost in the third one. The general form of the network is
\begin{align*}
\begin{tikzpicture}[scale=0.6]

\node[left]  (P1) at (0, 0) {$2\mathsf{X}$};
\node[right] (P2) at (1, 0) {$3\mathsf{X}$};
\node[left]  (P3) at (0,-1) {$\mathsf{X+Y}$};
\node[right] (P4) at (1,-1) {$c \mathsf{Z}$};
\node[left]  (P5) at (0,-2) {$\alpha \mathsf{X} +\beta \mathsf{Y} + \gamma \mathsf{Z}$};
\node[right] (P6) at (1,-2) {$(\alpha+a) \mathsf{X}  + (\beta + b) \mathsf{Y} + (\gamma - bc) \mathsf{Z}$};

\draw[->] (P1) to node[above] {} (P2);
\draw[->] (P3) to node[above] {} (P4);
\draw[->] (P5) to node[above] {} (P6);

\end{tikzpicture}
\end{align*}
where the stoichiometric coefficient $\gamma-bc$ is taken to ensure $\rank \Gamma=2$. The parameters satisfy
\begin{align*}
    b\geq1, c\geq1, \gamma\geq1, \alpha + \beta \leq1.
\end{align*}
Examining the stoichiometric matrix, we find that the network is dynamically nontrivial if and only if $b-a>0$ (recall that $b\geq1$). A short calculation shows that the reduced Jacobian determinant \eqref{eq:detJred} at a positive equilibrium $(x,y,z)$ equals
\begin{align*}
    -(b-a)b\left(\frac{2-\alpha-\beta}{xy} + \frac{ c \gamma}{xz}\right),
\end{align*}
which is negative, i.e., the equilibrium is a saddle. This contradicts the occurrence of a periodic orbit (see \Cref{subsec:saddle_no_periodic}).

Finally, suppose that the source of the second reaction is $\mathsf{X+Y}$ and additionally suppose that $\mathsf{Y}$ is gained in that reaction. Taking also into account that the network is dynamically nontrivial and has rank two, the network must belong to the Lifted LVA family \eqref{eq:LVA_d_lifted}.
\end{proof}

\begin{lemma}\label{lem:4species_saddle}
Suppose a quadratic $(4,3,2)$ mass-action network with no trivial species contains the reaction $\mathsf{2X} \to \mathsf{3X}$ and has a positive equilibrium. Then the network is
\begin{align}\label{eq:4species_saddle}
\begin{aligned}
\begin{tikzpicture}[scale=0.6]

\node[left]  (P1) at (0, 0) {$2\mathsf{X}$};
\node[right] (P2) at (1, 0) {$3\mathsf{X}$};
\node[left]  (P3) at (0,-1) {$\mathsf{X+Y}$};
\node[right] (P4) at (1,-1) {$\mathsf{Z+W}$};
\node[left]  (P5) at (0,-2) {$\mathsf{Z+W}$};
\node[right] (P6) at (1,-2) {$\mathsf{Y}$};

\draw[->] (P1) to node[above] {} (P2);
\draw[->] (P3) to node[above] {} (P4);
\draw[->] (P5) to node[above] {} (P6);

\end{tikzpicture}
\end{aligned}
\end{align}
For any choice of rate constants, the corresponding mass-action system has exactly one positive equilibrium in every positive stoichiometric class, and this equilibrium is a saddle. Consequently, the system admits no periodic orbit.
\end{lemma}
\begin{proof}
Consider a network satisfying the hypotheses of the lemma. In $\mathsf{2X} \to \mathsf{3X}$, species $\mathsf{X}$ is gained and therefore, as the network admits positive equilibria and hence must be dynamically nontrivial, there has to be a reaction where $\mathsf{X}$ is lost. The source of that reaction must be either $\mathsf{X}$, $2\mathsf{X}$, or $\mathsf{X+Y}$.

In case the source of the second reaction is $\mathsf{X}$ or $2\mathsf{X}$, the species $\mathsf{Y}$, $\mathsf{Z}$, $\mathsf{W}$ can only be gained there. However, then each of $\mathsf{Y}$, $\mathsf{Z}$, $\mathsf{W}$ must be lost in the third reaction, which is impossible in a quadratic network.

In case the source of the second reaction is $\mathsf{X+Y}$, the species $\mathsf{Z}$ and $\mathsf{W}$ can be lost only in the third reaction, the source of that reaction must be $\mathsf{Z+W}$. The general form of the network is then
\begin{align*}
\begin{tikzpicture}[scale=0.6]

\node[left]  (P1) at (0, 0) {$2\mathsf{X}$};
\node[right] (P2) at (1, 0) {$3\mathsf{X}$};
\node[left]  (P3) at (0,-1) {$\mathsf{X+Y}$};
\node[right] (P4) at (1,-1) {$c\mathsf{Z} + d\mathsf{W}$};
\node[left]  (P5) at (0,-2) {$\mathsf{Z} + \mathsf{W}$};
\node[right] (P6) at (1,-2) {$a\mathsf{X}+b\mathsf{Y}$};

\draw[->] (P1) to node[above] {} (P2);
\draw[->] (P3) to node[above] {} (P4);
\draw[->] (P5) to node[above] {} (P6);

\end{tikzpicture}
\end{align*}
with $a, b, c, d\geq0$. We easily find that the network is dynamically nontrivial if and only if $a=0$ and $b=c=d=1$, which gives the network \eqref{eq:4species_saddle}. By a short calculation, this network has a unique positive equilibrium in every positive stoichiometric class. Notice that network \eqref{eq:4species_saddle} was used in \Cref{subsec:detJred} as an illustrative example, where we concluded that any positive equilibrium is a saddle. Hence, by the observations in \Cref{subsec:saddle_no_periodic}, the system admits no periodic orbit.
\end{proof}


\begin{lemma}\label{lem:5species}
For $n\geq5$, any $(n,3,2)$ network without trivial species, and including the reaction $\mathsf{2X} \to \mathsf{3X}$, must be dynamically trivial. Consequently, the corresponding mass-action system admits no periodic orbit. 
\end{lemma}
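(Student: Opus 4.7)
The plan is to argue by contradiction: assuming the network is dynamically nontrivial, derive that $n \leq 4$. Dynamical nontriviality produces $\lambda \in \mathbb{R}^3_+$ with $\Gamma\lambda = 0$, so every row of $\Gamma$ has zero $\lambda$-weighted sum. The no-trivial-species hypothesis makes each such row nonzero, and for a nonzero row to be annihilated by strictly positive weights, it must contain at least one negative entry. In network terms, this says every species must be net-consumed in at least one reaction, and in particular must appear in the source complex of that reaction.

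The main step is then a counting argument on sources. The first source complex is $2\mathsf{X}$ and involves only $\mathsf{X}$. The other two source complexes are bimolecular (since the network is quadratic), so each can involve at most two distinct species. Writing $S_j$ for the set of species appearing in the source of reaction $j$, one obtains $|S_1 \cup S_2 \cup S_3| \leq 1 + 2 + 2 = 5$. But the previous paragraph forces every species to lie in this union, so combining with $n \geq 5$ squeezes us into the tight configuration $n = 5$, $|S_2| = |S_3| = 2$, and the three sets $S_1$, $S_2$, $S_3$ pairwise disjoint.

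The contradiction then comes from tracking $\mathsf{X}$. In the tight configuration, $\mathsf{X}$ appears only in the source $S_1$, so $\mathsf{X}$ is not consumed in reactions 2 or 3, giving $c_{\mathsf{X},2}, c_{\mathsf{X},3} \geq 0$, while $c_{\mathsf{X},1} = 1$ from $2\mathsf{X} \to 3\mathsf{X}$. The $\mathsf{X}$-row of $\Gamma$ is thus nonnegative with a strictly positive entry, which no $\lambda \in \mathbb{R}^3_+$ can annihilate. Hence the network is dynamically trivial. The absence of periodic orbits then follows from the remarks in \Cref{subsec:saddle_no_periodic}: a rank-two mass-action system with no trivial species admits only positive periodic orbits, and dynamical triviality rules these out. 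No step of this argument looks delicate; the whole proof rests on the observation that the autocatalytic reaction $2\mathsf{X} \to 3\mathsf{X}$ spends reaction~1's entire source budget on a single species, leaving no room to accommodate five distinct species across three bimolecular source complexes while still having $\mathsf{X}$ consumed somewhere.
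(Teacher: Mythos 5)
Your proof is correct and follows essentially the same counting argument as the paper: every species of a dynamically nontrivial network without trivial species must appear in some source, and the three bimolecular sources (one of which is $2\mathsf{X}$) cannot accommodate enough species. The only difference is that the paper observes up front that $\mathsf{X}$, being produced in reaction~1, must be consumed in another reaction and hence appear in that reaction's source, which merges the first two source budgets and yields the sharper bound of four species, avoiding your separate treatment of the tight case $n=5$.
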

\begin{proof}
As at the beginning of the proofs of \Cref{lem:saddle_or_lifted_LVA,lem:4species_saddle}, the sources of the first and second reactions can involve only two species in total. Also, because of the bimolecularity of the sources, the third reaction can also have at most two different species in its source. Hence at least one of the species does not appear in any of the sources, and its concentration is nondecreasing. Since it is, by assumption, not a trivial species, the network is dynamically trivial and admits no periodic orbit (see \Cref{subsec:saddle_no_periodic}).
\end{proof}

\section{Conclusions} \label{sec:conclusions}

In this paper, we have studied quadratic networks with three reactions, and identified all trimolecular mass-action systems in this class which admit a periodic orbit. In all of these systems, any nearby orbits of a periodic orbit are periodic, too. Thus, the existence of an isolated periodic orbit in a quadratic mass-action system with three reactions implies that at least one target complex has a molecularity of four or more. In fact, in the two-species case (with no assumption on the target molecularities), we classified all networks that admit a nondegenerate Andronov--Hopf bifurcation, and thus, a limit cycle, see cases \boxed{9} and \boxed{10} in \Cref{thm:ten_src_triples}. However, we leave it open whether the networks in cases \boxed{9} and \boxed{10} admitting no Andronov--Hopf bifurcations can have a limit cycle. For example, we do not know whether the system
\begin{align*}
\begin{tikzpicture}[scale=0.6]

\node[left]  (P1) at (0, 0) {$2\mathsf{X}$};
\node[right] (P2) at (1, 0) {$3\mathsf{X}+2\mathsf{Y}$};
\node[left]  (P3) at (0,-1) {$\mathsf{X+Y}$};
\node[right] (P4) at (1,-1) {$\mathsf{0}$};
\node[left]  (P5) at (0,-2) {$\mathsf{Y}$};
\node[right] (P6) at (1,-2) {$\mathsf{X}+\mathsf{Y}$};

\draw[->] (P1) to node[above] {\footnotesize $\kappa_1$} (P2);
\draw[->] (P3) to node[above] {\footnotesize $\kappa_2$} (P4);
\draw[->] (P5) to node[above] {\footnotesize $\kappa_3$} (P6);

\node at (10,-1)
{$\begin{aligned}
\dot{x} &= \kappa_1 x^2  - \kappa_2 xy + \kappa_3 y, \\
\dot{y} &= 2\kappa_1 x^2 - \kappa_2 xy \\
\end{aligned}$};

\end{tikzpicture}
\end{align*}
admits a limit cycle.

The networks in cases \boxed{9} and \boxed{10} in \Cref{thm:ten_src_triples} that admit a nondegenerate Andronov--Hopf bifurcation can be enlarged by adding a new (nontrivial) species in a way that the rank of the network is preserved \cite{banaji:boros:hofbauer:2022}. The resulting quadratic $(3,3,2)$ networks admit a supercritical Andronov--Hopf bifurcation by \cite[Remark 6]{banaji:boros:hofbauer:2022}. For example, the networks
\begin{align}\label{eq:lifted_tetra_hepta}
\begin{aligned}
\begin{tikzpicture}[scale=0.6]

\node[left]  (P1) at (0, 0) {$2\mathsf{X}$};
\node[right] (P2) at (1, 0) {$3\mathsf{X}+\mathsf{Y}$};
\node[left]  (P3) at (0,-1) {$\mathsf{X+Y}$};
\node[right] (P4) at (1,-1) {$\mathsf{Y+Z}$};
\node[left]  (P5) at (0,-2) {$\mathsf{Y+Z}$};
\node[right] (P6) at (1,-2) {$\mathsf{0}$};

\draw[->] (P1) to node[above] {\footnotesize $\kappa_1$} (P2);
\draw[->] (P3) to node[above] {\footnotesize $\kappa_2$} (P4);
\draw[->] (P5) to node[above] {\footnotesize $\kappa_3$} (P6);


\node at (5.5,-1) {and};

\begin{scope}[shift={(10,0)}]
\node[left]  (P1) at (0, 0) {$2\mathsf{X}$};
\node[right] (P2) at (1, 0) {$4\mathsf{X}+3\mathsf{Y}+\mathsf{Z}$};
\node[left]  (P3) at (0,-1) {$\mathsf{X+Y}$};
\node[right] (P4) at (1,-1) {$\mathsf{0}$};
\node[left]  (P5) at (0,-2) {$\mathsf{Z}$};
\node[right] (P6) at (1,-2) {$\mathsf{X}$};

\draw[->] (P1) to node[above] {\footnotesize $\kappa_1$} (P2);
\draw[->] (P3) to node[above] {\footnotesize $\kappa_2$} (P4);
\draw[->] (P5) to node[above] {\footnotesize $\kappa_3$} (P6);

\end{scope}

\end{tikzpicture}
\end{aligned}
\end{align}
obtained from \eqref{eq:tetra} with $d=0$ and \eqref{eq:hepta} with $(c,d)=(1,0)$, respectively, both admit a supercritical Andronov--Hopf bifurcation, and thus, a stable limit cycle. We leave it open whether there are quadratic $(3,3,2)$ networks (with no assumption on the target molecularities) that admit a nondegenerate Andronov--Hopf bifurcation which is not inherited from a smaller network as in these examples.

Interestingly, the octomolecular network on the right of \eqref{eq:lifted_tetra_hepta} admits not only a supercritical Andronov--Hopf bifurcation, but also a Bogdanov--Takens bifurcation \cite[Section 8.4]{kuznetsov:2004}, and hence, a homoclinic bifurcation. The stoichiometric classes are given by $x-y+z = C$ for $C\in\mathbb{R}$, and the set of positive equilibria is the curve $\left\{\left(t,\frac{3\kappa_1}{\kappa_2}t,\frac{\kappa_1}{\kappa_3}t^2\right)\colon t>0\right\}$, which intersects the stoichiometric classes in $0$, $1$, or $2$ points. One may verify that for fixed $\kappa_2>0$ and $C<0$, a supercritical Bogdanov--Takens bifurcation occurs at
\begin{align*}
(\kappa_1,\kappa_3) = \kappa_2 \left(\frac{3+\sqrt{6}}{3},\frac{-2C}{3+\sqrt{6}}\right).
\end{align*}
We conclude this paragraph with the observation that the mass-action differential equation of the octomolecular network in question is identical to that of
\begin{align*}
\begin{tikzpicture}[scale=0.6]

\node[left]  (P1)  at (0, 0) {$2\mathsf{X}$};
\node[right] (P2)  at (1, 0) {$3\mathsf{X}$};
\node[left]  (P3)  at (0,-1) {$2\mathsf{X}$};
\node[right] (P4)  at (1,-1) {$2\mathsf{X}+\mathsf{Y}$};
\node[left]  (P5)  at (0,-2) {$2\mathsf{X}$};
\node[right] (P6)  at (1,-2) {$2\mathsf{X}+\mathsf{Z}$};
\node[left]  (P7)  at (0,-3) {$\mathsf{X+Y}$};
\node[right] (P8)  at (1,-3) {$\mathsf{0}$};
\node[left]  (P9)  at (0,-4) {$\mathsf{Z}$};
\node[right] (P10) at (1,-4) {$\mathsf{X}$};

\draw[->] (P1) to node[above] {\footnotesize $2\kappa_1$} (P2);
\draw[->] (P3) to node[above] {\footnotesize $3\kappa_1$} (P4);
\draw[->] (P5) to node[above] {\footnotesize $\kappa_1$} (P6);
\draw[->] (P7) to node[above] {\footnotesize $\kappa_2$} (P8);
\draw[->] (P9) to node[above] {\footnotesize $\kappa_3$} (P10);

\end{tikzpicture}
\end{align*}
which is a five-reaction, trimolecular system, with restrictions on its rate constants.

The construction at the end of the previous paragraph works in general: the mass-action differential equation of any quadratic network (with no assumption on the target molecularities) is identical to that of some trimolecular, quadratic network with some restrictions on its rate constants. Thus, claims about systems with high target molecularity can often be reduced to claims about trimolecular systems, at the cost of increasing the total number of reactions. Indeed, given any mass-action system, to obtain an equivalent system with trimolecular targets we may replace each reaction of the form
\begin{align*}
\sum_{i=1}^na_i\mathsf{X}_i \stackrel{\kappa}{\longrightarrow} \sum_{i=1}^n(a_i+c_i)\mathsf{X}_i \quad \text{ with }\sum_{i=1}^n(a_i+c_i)\geq 4
\end{align*}
by the (at most) $n$ reactions
\begin{align*}
\sum_{i=1}^na_i\mathsf{X}_i \stackrel{\kappa|c_j|}{\longrightarrow} (a_j+\sgn c_j)\mathsf{X}_j + \sum_{i\neq j}a_i\mathsf{X}_i \quad \text{ for } j=1,\ldots,n \text{ with }c_j\neq0.
\end{align*}
For example, the three-reaction, tetramolecular network \eqref{eq:tetra_simplest} gives rise to the same mass-action differential equation as
\begin{align*}
\begin{tikzpicture}[scale=0.6]

\node[left]  (P1) at (0, 0) {$2\mathsf{X}$};
\node[right] (P2) at (1, 0) {$3\mathsf{X}$};
\node[left]  (P3) at (0,-1) {$2\mathsf{X}$};
\node[right] (P4) at (1,-1) {$2\mathsf{X}+\mathsf{Y}$};
\node[left]  (P5) at (0,-2) {$\mathsf{X+Y}$};
\node[right] (P6) at (1,-2) {$\mathsf{Y}$};
\node[left]  (P7) at (0,-3) {$\mathsf{Y}$};
\node[right] (P8) at (1,-3) {$\mathsf{0}$};

\draw[->] (P1) to node[above] {\footnotesize $\kappa_1$} (P2);
\draw[->] (P3) to node[above] {\footnotesize $\kappa_1$} (P4);
\draw[->] (P5) to node[above] {\footnotesize $\kappa_2$} (P6);
\draw[->] (P7) to node[above] {\footnotesize $\kappa_3$} (P8);

\end{tikzpicture}
\end{align*}
which is trimolecular, has one more reaction, and has a restriction on its rate constants. As this example and the first network in \Cref{subsec:counterex} show, there exist quadratic, trimolecular $(2,4,2)$ networks admitting a nondegenerate Andronov--Hopf bifurcation with mass-action kinetics. In future work, we plan to find all such networks, and identify which of the corresponding mass-action systems admit Bogdanov--Takens bifurcation.


\bibliographystyle{abbrv}
\bibliography{biblio}

\end{document}